\newtheorem{thm}{Theorem}[section]
\newtheorem{cor}[thm]{Corollary}
\newtheorem{lemma}[thm]{Lemma}
\newtheorem{prop}[thm]{Proposition}
\newtheorem{conj}[thm]{Conjecture}
\theoremstyle{definition}
\newtheorem{df}[thm]{Definition}
\newtheorem{rem}[thm]{Remark}
\newtheorem{ex}[thm]{Example}
\DeclareMathOperator{\GL}{GL}
\DeclareMathOperator{\SL}{SL}
\DeclareMathOperator{\SO}{SO}
\DeclareMathOperator{\Sp}{Sp}
\DeclareMathOperator{\Gr}{Gr}
\DeclareMathOperator{\spec}{spec}
\DeclareMathOperator{\Spec}{Spec}
\DeclareMathOperator{\CSpec}{CSpec}
\DeclareMathOperator{\Sch}{Sch}
\DeclareMathOperator{\Comm}{Comm}
\DeclareMathOperator{\Aff}{Aff}
\DeclareMathOperator{\Frob}{Frob}
\DeclareMathOperator{\res}{res}
\DeclareMathOperator{\Hom}{Hom}
\DeclareMathOperator{\Cl}{Cl}
\DeclareMathOperator{\Bun}{Bun}
\DeclareMathOperator{\Proj}{Proj}
\DeclareMathOperator{\Norm}{Norm}
\DeclareMathOperator{\Cent}{Cent}
\DeclareMathOperator{\colim}{colim}
\def\Mod{\mathcal{M}od \,}
\def\0{{\bf 0}}
\def\A{{\mathbb A}}
\def\B{{\mathbb B}}
\def\C{{\mathbb C}}
\def\F{{\mathbb F}}
\def\G{{\mathbb G}}
\def\N{{\mathbb N}}
\def\P{{\mathbb P}}
\def\Q{{\mathbb Q}}
\def\R{{\mathbb R}}
\def\S{{\mathbb S}}
\def\T{{\mathbb T}}
\def\Z{{\mathbb Z}}
\def\cA{{\mathcal A}}
\def\cB{{\mathcal B}}
\def\cC{{\mathcal C}}
\def\cD{{\mathcal D}}
\def\cF{{\mathcal F}}
\def\cG{{\mathcal G}}
\def\cI{{\mathcal I}}
\def\cK{{\mathcal K}}
\def\cL{{\mathcal L}}
\def\cM{{\mathcal M}}
\def\cN{{\mathcal N}}
\def\cO{{\mathcal O}}
\def\cP{{\mathcal P}}
\def\cR{{\mathcal R}}
\def\cS{{\mathcal S}}
\def\cT{{\mathcal T}}
\def\cU{{\mathcal U}}
\def\cW{{\mathcal W}}
\def\cX{{\mathcal X}}
\def\cY{{\mathcal Y}}
\def\fe{{\mathfrak e}}
\def\fg{{\mathfrak g}}
\def\fm{{\mathfrak m}}
\def\fp{{\mathfrak p}}
\def\Fun{{\F_1}}
\def\Funsq{{\F_{1^2}\!}}
\def\Funn{{\F_{1^n}\!}}
\def\int{\textup{int}}
\def\id{\textup{id}}
\def\1{\textbf{1}}
\def\barx{{\overline{x}}}
\def\bary{{\overline{y}}}
\def\overz{{\overline{\Spec\Z}}}
\def\overok{{\overline{\Spec O_K}}}
\def\blanc{-}
\def\bp{{\mathcal{B}lpr}}
\def\bpspaces{{\mathcal{L}\!oc\bp \mathcal{S}\!p}}
\def\canc{{\textup{canc}}}
\def\rk{{\textup{rk}}}
\def\min{{\textup{min\,}}}
\def\SRings{{\mathcal{S\!R}\textit{ings}}}
\def\Rings{{\mathcal{R}\textit{ings}}}
\def\Sets{\textup{Sets}}
\def\={\equiv}
\def\n={\equiv\hspace{-10,5pt}/\hspace{3,5pt}}
\def\un{\underline{\bf n}}
\def\px{\star} %\def\px{{\times\hspace{-7pt}+}}
\def\st{{\textup{st}}}
\def\et{{\textup{\'et}}}
\def\bl{{\textup{bl}}}
\def\top{{\textup{top}}}
\def\alg{{\textup{alg}}}
\def\log{{\textup{log}}}
\def\hom{{\textup{hom}}}
\def\Sotimes{\otimes^{\!+}}
\def\toptimes{\times^\top}
\def\SA{{\vphantom{\A}}^{+\!}{\A}}   
\def\SP{{\vphantom{\P}}^{+}{\P}}   
\def\SG{{\vphantom{\G}}^{+}{\G}}   
\def\sT{{\scriptscriptstyle\cT\hspace{-3pt}}}
\def\ud{{\underline{d}}}
\def\ue{{\underline{e}}}
\def\un{{\underline{n}}}
\def\udim{{\underline{\dim}\, }}
\DeclareMathOperator{\BSch}{Sch_\Fun}
\DeclareMathOperator{\TSch}{Sch_\cT}
\newcommand{\arincl}[1]{\ar@{ >->}@<-0,0ex>#1} %inclusion arrow for xy-matrix with better spacing
\newcommand{\norm}[1]{\left| #1 \right|}
\newcommand{\gen}[1]{\langle #1 \rangle}
\newcommand{\bpquot}[2]{#1\!\sslash\!#2}
\newcommand{\bpgenquot}[2]{#1\!\sslash\!\gen{#2}}
\newcommand{\tinymat}[4]{\bigl( \begin{smallmatrix} #1 & #2 \\ #3 & #4 \end{smallmatrix} \bigr)}
\title{A blueprinted view on $\Fun$-geometry}
\author[Oliver Lorscheid]{Oliver Lorscheid}
\begin{document}

\begin{abstract}
 This overview paper has two parts. In the first part, we review the development of $\Fun$-geometry from the first mentioning by Jacques Tits in 1956 until the present day. We explain the main ideas around $\Fun$, embedded into the historical context, and give an impression of the multiple connections of $\Fun$-geometry to other areas of mathematics.

 In the second part, we review (and preview) the geometry of blueprints. Beyond the basic definitions of blueprints, blue schemes and projective geometry, this includes a theory of Chevalley groups over $\Fun$ together with their action on buildings over $\Fun$; computations of the Euler characteristic in terms of $\Fun$-rational points, which involve quiver Grassmannians; $K$-theory of blue schemes that reproduces the formula $K_i(\Fun)=\pi^\st_i(\S^0)$; models of the compactifications of $\Spec \Z$ and other arithmetic curves; and explanations about the connections to other approaches towards $\Fun$ like monoidal schemes after Deitmar, $\B_1$-algebras after Lescot, $\Lambda$-schemes after Borger, relative schemes after To\"en and Vaqui\'e, log schemes after Kato and congruence schemes after Berkovich and Deitmar.
\end{abstract}

\begin{classification}

\end{classification}

\begin{keywords}
 ABC-conjecture; Arakelov theory; blueprint; building; Chevalley group; completion of $\Spec\Z$; congruence scheme; cyclotomic field extension; Euler characteristic; $\Fun$-geometry; $K$-theory; $\Lambda$-ring; log scheme; monoid; projective geometry; quiver Grassmannian; Riemann hypothesis; simplicial complex; stable homotopy of spheres; toric variety; total positivity; Weyl group; zeta function.
\end{keywords}

\maketitle

\setlength\cftbeforesecskip{0pt}
\setlength\cftbeforepartskip{20pt}
{\small \tableofcontents}

\part*{Prologue}
\addcontentsline{toc}{part}{Prologue}

\noindent
Throughout the last two decades, in which the field of $\Fun$-geometry has been formed and established, the most prominent phrases to comprise the purpose of a \emph{field with one element} were probably the following two. The Weyl group\index{Weyl group} $W$ of a Chevalley group\index{Chevalley group} $G$ should be the group of $\Fun$-rational points\index{F1-rational points@$\Fun$-rational points}, i.e.\ 
\[
 G(\Fun) \quad = \quad W,
\]
and the completion of $\Spec\Z$\index{Completion of $\Spec O_K$} at the infinite place should be a curve over $\Fun$; in particular, $\Z$ should be an algebra over $\Fun$.

These ideas inspired and guided the search for a suitable geometry over $\Fun$, though these two concepts are, strictly speaking, incompatible. This is quickly explained (see the introduction of \cite{L12} for more details). The $\Fun$-algebra $\Z$ defines a base extension functor $\blanc\otimes_\Fun\Z$ and a map
\[
 \sigma: \ W \ = \ G(\Fun)\ = \ \Hom(\Spec\Fun,G_\Fun) \quad \stackrel{\blanc\otimes_\Fun\Z}{\longrightarrow} \quad \Hom(\Spec\Z,G) \ = \ G(\Z)
\]
where $G_\Fun$ is an $\Fun$-model of $G$. Since $\Fun$-geometry rigidifies usual geometry, we expect that the $\Fun$-model $G_\Fun$ fixes the choice of a maximal split torus $T$ in $G(\Z)$ and an isomorphism $W=N/T$ where $N$ is the normalizer of $T$. It is natural to assume that $\sigma$ is a group homomorphism and a section to the defining sequence
\[
 \xymatrix@C=4pc{1\ar[r]&T\ar[r]&N\ar[r]&W\ar[r]\ar@{-->}@/_1pc/[l]_\sigma&1.}
\]
of the Weyl group $W$. This is, however, not possible in general as the following example shows. Namely, the coset $\tinymat 01{-1}0 T$ of the normalizer $N=T\cup \tinymat 01{-1}0 T$ of the diagonal torus $T$ of $G(\Z)=\SL_2(\Z)$ does not contain an element of order $2$ while the Weyl group $W=\{\pm 1\}$ does.

Before we explain in Part \ref{partII} how to resolve this problem and why blueprints are the natural framework for Chevalley groups over $\Fun$, let us have a look at the development of these ideas and, more generally, of $\Fun$-geometry from its beginning until the present day. In particular, we would like to emphasize that nowadays, $\Fun$-geometry has a plenitude of connections to other fields of mathematics.

%%%%%%%%%%%%%%%%%%%%%%%%%%%%%%%%%%%%%%%%%%%%%%%%%%%%%%%%%%%%%%%%%%%%%%%%%%%%%%%%%%%%%%%%%%%%%%%%%%%%%%%%%%%%%%%%%%%%%%%%%%%%%%%%%%%%%%%%%%%%%%%%%%%%%%%%%%%%%%%%%%%%%%%%%%%%%%%%%%%%%%%%%%%%%%%%%%%%%%%%%%%%%%%%%%%%%%%%%%%%
%%%%%%%%%%%%%%%%%%%%%%%%%%%%%%%%%%%%%%%%%%%%%%%%%%%%%%%%%%%%%%%%%%%%%%%%%%%%%%%%%%%%%%%%%%%%%%%%%%%%%%%%%%%%%%%%%%%%%%%%%%%%%%%%%%%%%%%%%%%%%%%%%%%%%%%%%%%%%%%%%%%%%%%%%%%%%%%%%%%%%%%%%%%%%%%%%%%%%%%%%%%%%%%%%%%%%%%%%%%%
%%%%%%%%%%%%%%%%%%%%%%%%%%%%%%%%%%%%%%%%%%%%%%%%%%%%%%%%%%%%%%%%%%%%%%%%%%%%%%%%%%%%%%%%%%%%%%%%%%%%%%%%%%%%%%%%%%%%%%%%%%%%%%%%%%%%%%%%%%%%%%%%%%%%%%%%%%%%%%%%%%%%%%%%%%%%%%%%%%%%%%%%%%%%%%%%%%%%%%%%%%%%%%%%%%%%%%%%%%%%

\part{A rough guide to $\Fun$}
\label{partI}

\section*{First milestones}
\addcontentsline{toc}{section}{First milestones}

The first speculation about the existence of a ``field of characteristic one'' can be found in the 1956 paper \cite{Tits56} by Jacques Tits. He observed that certain incidence geometries that come from point--line configurations in a projective space over a finite field $\F_q$ have a combinatorial counterpart when $q$ goes to $1$. This means that there exist \emph{thin} incidence geometries that satisfy similar axioms as the \emph{thick} geometries that come from projective spaces over $\F_q$. This picture is compatible with the action of algebraic groups on the thick incidence geometries and of their combinatorial counterparts, the Weyl groups, on the thin incidence geometries. This inspired Tits to dream about an explanation of the thin geometries as a geometry over $\Fun$. 

All this finds a nice formulation in terms of the group $G(\F_q)$ of $\F_q$-rational points of a Chevalley group $G$ that acts on a spherical building $\cB_q$\index{Building}. The limit $q\to 1$ is the Weyl group $W$ of $G$ that acts on the apartment\index{Apartment} $\cA=\cB_1$ of $\cB_q$. This has led to the slogan $G(\Fun)=W$. For more details on this, see \cite{Cohn04}, \cite{Thas13} and Section \ref{section: buildings} of this text.

Much later, in the early 1990's, the expected shape of $\Fun$-geometry took a more concise form when it was connected to profound number theoretic problems. Though it is hard to say who contributed to which idea in detail, one finds the following traces in the literature.

Smirnov describes in \cite{Smirnov92} an approach to derive the ABC-conjecture from an approximate Hurwitz formula for a morphism $\overz\to\P^1_\Funsq$, for which he gives an ad hoc definition. Here $\Funsq=\{0,\pm 1\}$ is the \emph{constant field} of $\Spec\Z$. More generally, he considers any \emph{arithmetic curve $X$}\index{Arithmetic curve}, i.e.\ the Arakelov compactification of the spectrum of the algebraic integers in a number field $K$, and conjectures an approximate Hurwitz formula for the morphism $X\to\P^1_{\Funn}$ where $\Funn=\{0\}\cup\mu_n$ is the union of $0$ with all $n$-th roots of unity of $K$.

Around the same time, Kapranov and Smirnov aim in \cite{Kapranov-Smirnov} to calculate cohomological invariants of arithmetic curves in terms of cohomology over $\Funn$. The unfinished text contains many concepts that were of influence to the further development of $\Fun$-geometry: linear and homological algebra over $\Funn$, distinguished morphisms as cofibrations, fibrations and equivalences (which might be seen as a first hint of the connections of $\Fun$-geometry to homotopy theory), Arakelov theory \emph{modulo $n$}\index{Arakelov theory} and connections to class field theory and reciprocity laws, which can be seen in analogy to knots and links in $3$-space. See Manin's paper \cite{Manin06} for further explanations on the connection between arithmetic curves and $3$-manifolds.
Kurokawa (\cite{Kurokawa92}) publishes in 1992 a construction of absolute tensor products of zeta functions of objects of different characteristics. The underlying assumption is that these objects are all defined over an absolute point, i.e.\ $\Fun$. In his 1995 lecture notes \cite{Manin95}, Manin ties up this viewpoint with Deninger's work \cite{Deninger91}, \cite{Deninger92} and \cite{Deninger94} on a conjectural formalism of motives that aims at transferring Weil's proof (\cite{Weil48}) of the Riemann hypothesis from positive characteristic to the classical case. This formalism includes a conjectural decomposition 
\begin{multline*} 
2^{-1/2}\pi^{-s/2}\Gamma(\frac s2)\zeta(s) \ = \\ \frac{\det_\infty\Bigl(\frac 1{2\pi}(s-\Theta)\Bigl| H^1(\overline{\Spec\Z},\cO_\cT)\Bigr.\Bigr)}{\det_\infty\Bigl(\frac 1{2\pi}(s-\Theta)\Bigl| H^0(\overline{\Spec\Z},\cO_\cT)\Bigr.\Bigr)\det_\infty\Bigl(\frac 1{2\pi}(s-\Theta)\Bigl| H^2(\overline{\Spec\Z},\cO_\cT)\Bigr.\Bigr)} 
\end{multline*}
of the completed Riemann zeta function where the two \emph{regularized determinants $\det_\infty(\dotsc)$} of the denominator equal $s/2\pi$ resp.\ $(s-1)/2\pi$. Manin observes that these functions fit Kurokawa's formalism if $s/2\pi$ is interpreted as $\zeta_{\T^0}(s)$ and $(s-1)/2\pi$ is interpreted as $\zeta_\T(s)$ where $\T$ is the postulated \emph{absolute Tate motive}\index{Motive}, or, following Kurokawa's line of thought, the \emph{affine line} $\A^1_\Fun$ over the \emph{absolute point} $\T^0=\Spec\Fun$.

%%%%%%%%%%%%%%%%%%%%%%%%%%%%%%%%%%%%%%%%%%%%%%%%%%%%%%%%%%%%%%%%%%%%%%%%%%%%%%%%%%%%%%%%%%%%%%%%%%%%%%%%%%%%%%%%%%%%%%%%%%%%%%%%%%%%%%%%%%%%%%%%%%%%%%%%%%%%%%%%%%%%%%%%%%%%%%%%%%%%%%%%%%%%%%%%%%%%%%%%%%%%%%%%%%%%%%%%%%%%
%%%%%%%%%%%%%%%%%%%%%%%%%%%%%%%%%%%%%%%%%%%%%%%%%%%%%%%%%%%%%%%%%%%%%%%%%%%%%%%%%%%%%%%%%%%%%%%%%%%%%%%%%%%%%%%%%%%%%%%%%%%%%%%%%%%%%%%%%%%%%%%%%%%%%%%%%%%%%%%%%%%%%%%%%%%%%%%%%%%%%%%%%%%%%%%%%%%%%%%%%%%%%%%%%%%%%%%%%%%%

\section*{A dozen ways to define $\Fun$}
\addcontentsline{toc}{section}{A dozen ways to define $\Fun$}

The first suggestion of what a \emph{variety over $\Fun$} should be was given by Soul\'e (\cite{Soule04}) in 2004. This paper was of particular importance since it adds the following ideas to $\Fun$-geometry. It contains the definition of the zeta function $\zeta_X(s)$\index{Zeta function} of an $\Fun$-scheme $X$ whose $\F_q$-rational points $X(\F_q)$ can be counted by a polynomial\index{Counting polynomial} 
\[
 N_X(q) \quad = \quad \sum_{i=0}^n \ a_i q^i
\]
with integral coefficients $a_i$. A priori, $\zeta_X(s)$ is defined as the inverse of the limit $q\to 1$ of the zeta function $\zeta(X_{\F_q},s)$ over $\F_q$ where one has to multiply by $(q-1)^{N(1)}$ to resolve the pole at $q=1$. Note that its order $N(1)$ might be interpreted as the number of $\Fun$-rational points\index{F1-rational points@$\Fun$-rational points} of $X$. The comparison theorem for singular and $l$-adic cohomology and the Weil conjectures imply that $N(1)$ equals the Euler characteristic of $X_\C$ if $X_\Z$ is smooth and projective. See Section \ref{section: euler cahracteristics and f1-rational points} of this text for more on the connection between the Euler characteristic and $\Fun$-rational points.

It turns out that the limit $q\to 1$ of $(q-1)^{-N(1)}\zeta(X_{\F_q},s)^{-1}$ is the elementary function
\[
 \zeta_X(s) \quad = \quad \prod_{i=0}^n \ (s-i)^{a_i}.
\]
In particular, $\zeta_{\Spec\Fun}(s)=s$ and $\zeta_{\A^1_\Fun}(s)=s-1$, which equals, up to a factor $1/2\pi$, the motivic zeta functions of Deninger, as postulated by Manin. The advantage of this definition is that it depends only on the number of $\F_q$-rational points of $X$, but not on the particular theory of $\Fun$-varieties, so that this notion of $\zeta$-functions applies to every notion of an $\Fun$-scheme.

Therefore it was possible to investigate zeta functions of $\Fun$-schemes in a down to earth manner and largely independent from the development of scheme theory over $\Fun$. In particular, Kurokawa explores, partly in collaborations with others, properties of absolute zeta functions and developed other notions for absolute arithmetics like derivations, modular forms and Hochschild cohomology, cf.\ his papers \cite{Kurokawa04} and \cite{Kurokawa05} and his collaborations \cite{Deitmar-Koyama-Kurokawa08} (with Deitmar and Koyama), \cite{Kim-Koyama-Kurokawa09} (with Kim and Koyama), \cite{Koyama-Kurokawa04}, \cite{Koyama-Kurokawa10}, \cite{Koyama-Kurokawa11a} and \cite{Koyama-Kurokawa11b} (with Koyama), \cite{Kurokawa-Ochiai-Wakayama03} (with Ochiai and Wakayama) and \cite{Kurokawa-Wakayama04} (with Wakayama). Other works on absolute zeta functions are Connes and Consani's paper \cite{Connes-Consani11c}, Deitmar's paper \cite{Deitmar07}, Minami's paper \cite{Minami09} and the author's paper \cite{L10}. Further 
work 
on Hochschild cohomology appears in Betley's paper \cite{Betley10}.

Soul\'e's paper \cite{Soule04} also contains the hypothetical formula\index{Stable homotopy of spheres}
\[
 K_*(\Spec\Fun) \quad = \quad \pi_*({B\GL(\infty,\Fun)}^+) \quad  = \quad \pi_*({BS_\infty}^+) \quad \simeq  \quad \pi_*^\st({\S^0}) 
\]
where the first equality is the definition of $K$-theory\index{K-theory@$K$-theory} via Quillen's +-construc\-tion, symbolically applied to the elusive field $\Fun$. The equality in the middle is derived from Tits' idea $\GL(n)=S_n$, and therefore
\[ 
 \GL(\infty,\Fun) \quad = \quad \bigcup_{n\geq 1} \GL(n,\Fun) \quad = \quad \bigcup_{n\geq 1} S_n \quad = \quad S_\infty. 
\]
The last isomorphism is the Barratt-Priddy-Quillen theorem (\cite{Barratt71}, \cite{Priddy71}).

One might speculate about the psychological impact of this first attempt to define $\Fun$-geometry: soon after, a dozen further approaches towards $\Fun$ (and more) appeared. Without keeping the chronological order, we mention them in the following. For a brief introduction into the $\Fun$-geometries that appeared until 2009, see the overview paper \cite{LL11b} of L\'opez Pe\~na and the author. 

Soul\'e's definition underwent a number of variations by himself in \cite{Soule11} and by Connes and Consani in \cite{Connes-Consani11a} and later in \cite{Connes-Consani10a}. The latter notion of an $\Fun$-scheme turns out to be in essence the same as a torified scheme as introduced by L\'opez Pe\~na and the author in \cite{LL11a} in order to produce examples for the $\Fun$-varieties considered in \cite{Soule04} and \cite{Connes-Consani11a}.

Deitmar reinterprets in \cite{Deitmar05} Kato's generalization (\cite{Kato94}) of a toric variety\index{Toric variety} as an $\Fun$-scheme. Deitmar introduces the notion of a prime ideal for a monoid, which yields a topological space of a combinatorial flavour together with a structure sheaf in monoids. Deitmar pursues the theory in the papers \cite{Deitmar07} and \cite{Deitmar08}. This category of $\Fun$-schemes is minimalistic in the sense that it can be embedded into any other $\Fun$-theory. In this sense, toric geometry and its generalization by Kato and Deitmar constitutes the very core of $\Fun$-geometry.

Durov's view on Arakelov theory\index{Arakelov theory} in terms of monads in his comprehensive thesis \cite{Durov07} produces a notion of $\Fun$-schemes. See Fresan's m\'emoire \cite{Fresan09} for a summary of this theory. Of particular interest is that Durov's theory contains a model of $\overz$\index{Completion of $\Spec O_K$}. Similarly, Haran's definition of a generalized scheme in \cite{Haran07} yields an $\Fun$-geometry with a model for $\overz$. Haran modifies this approach in \cite{Haran09} and yields a model for $\overz$ with an interesting self-product $\overz\times\overz$ over $\Fun$. Further models of $\overz$ are given in Takagi's paper \cite{Takagi12b}, which is based on his previous work \cite{Takagi10}, \cite{Takagi11a}, \cite{Takagi11b} and \cite{Takagi12a}, and in the author's paper \cite{blueprints-mpi}. See Section \ref{section: the arithmetic line} for a description of the latter model of $\overz$ as a locally blueprinted space.

To\"en and Vaqui\'e generalize in \cite{Toen-Vaquie09} the functorial viewpoint on scheme theory to any closed complete and cocomplete symmetric monoidal category $\cC$ that replaces the category of $R$-modules in the case of schemes over a ring $R$. They call the resulting objects \emph{schemes relative to $\cC$}\index{Relative scheme}. An $\Fun$-scheme is a scheme relative to the category of sets, together with the Cartesian product. After partial results by Marty in \cite{Marty07}, Vezzani shows in \cite{Vezzani12} that this notion of an $\Fun$-scheme is equivalent to Deitmar's. Marty develops smooth morphism for relative schemes in \cite{Marty08}. In \cite{L12b}, the author shows that blue schemes can be realized as schemes relative to certain module categories; for a summary, see Section \ref{subsection: relative schemes after toen and vaquie}.

Borger's work \cite{Borger11a} and \cite{Borger11b} on $\Lambda$-algebraic geometry\index{Lambda-scheme@$\Lambda$-scheme} yields a notion of an $\Fun$-scheme in \cite{Borger09}. See Section \ref{subsection: lambda-schemes} for more details and the connection to blueprints. 

Lescot develops in \cite{Lescot09}, \cite{Lescot11}, \cite{Lescot12} and \cite{Lescot12b} a geometry associated with idempotent semirings, which he gives the interpretation of an $\Fun$-geometry. See Section \ref{subsubsection: Idempotent semirings and sesquiads} for the connection to blueprints. 

Connes and Consani extend Lescot's viewpoint to the context of Krasner's hyperrings (\cite{Krasner57} and \cite{Krasner83}), which they promote as a geometry over $\Fun$ in \cite{Connes-Consani10b} and \cite{Connes-Consani11b}.

Berkovich introduces a theory of congruence schemes\index{Congruence scheme} for monoids (see \cite{Berkovich11}), which can be seen as an enrichment of Deitmar's $\Fun$-geometry. Deitmar modifies this approach and applies it to sesquiads, which is a common generalization of rings and monoids. See Section \ref{subsubsection: Idempotent semirings and sesquiads} for the connection of sesquiads to blueprints and Section \ref{subsection: congruence schemes} for further remarks on congruence schemes. 

The author develops in the papers \cite{blueprints1}, \cite{blueprints2}, \cite{LL12} (jointly with L\'opez Pe\~na), \cite{blueprints-mpi} and \cite{L12b} the theory of blueprints\index{Blueprint} and blue schemes\index{Blue scheme}, which will be reviewed in Part \ref{partII} of this paper.

%%%%%%%%%%%%%%%%%%%%%%%%%%%%%%%%%%%%%%%%%%%%%%%%%%%%%%%%%%%%%%%%%%%%%%%%%%%%%%%%%%%%%%%%%%%%%%%%%%%%%%%%%%%%%%%%%%%%%%%%%%%%%%%%%%%%%%%%%%%%%%%%%%%%%%%%%%%%%%%%%%%%%%%%%%%%%%%%%%%%%%%%%%%%%%%%%%%%%%%%%%%%%%%%%%%%%%%%%%%%
%%%%%%%%%%%%%%%%%%%%%%%%%%%%%%%%%%%%%%%%%%%%%%%%%%%%%%%%%%%%%%%%%%%%%%%%%%%%%%%%%%%%%%%%%%%%%%%%%%%%%%%%%%%%%%%%%%%%%%%%%%%%%%%%%%%%%%%%%%%%%%%%%%%%%%%%%%%%%%%%%%%%%%%%%%%%%%%%%%%%%%%%%%%%%%%%%%%%%%%%%%%%%%%%%%%%%%%%%%%%

\section*{Results and new directions}
\addcontentsline{toc}{section}{Results and new directions}

In the last three years, the number of publications on $\Fun$ has more than doubled. Many new ideas were formed and many connections to other fields of mathematics were found and established. We will give an overview of recent results around $\Fun$. This overview is probably not exhaustive, but hopefully serves as an impression of the versatility of $\Fun$ today. 

We illustrate in Figure \ref{figure: applications} some connections to other fields of mathematics, which can be sorted roughly into the four branches arithmetic, homotopy theory, geometry and combinatorics. 

\begin{figure}[t]
 \includegraphics{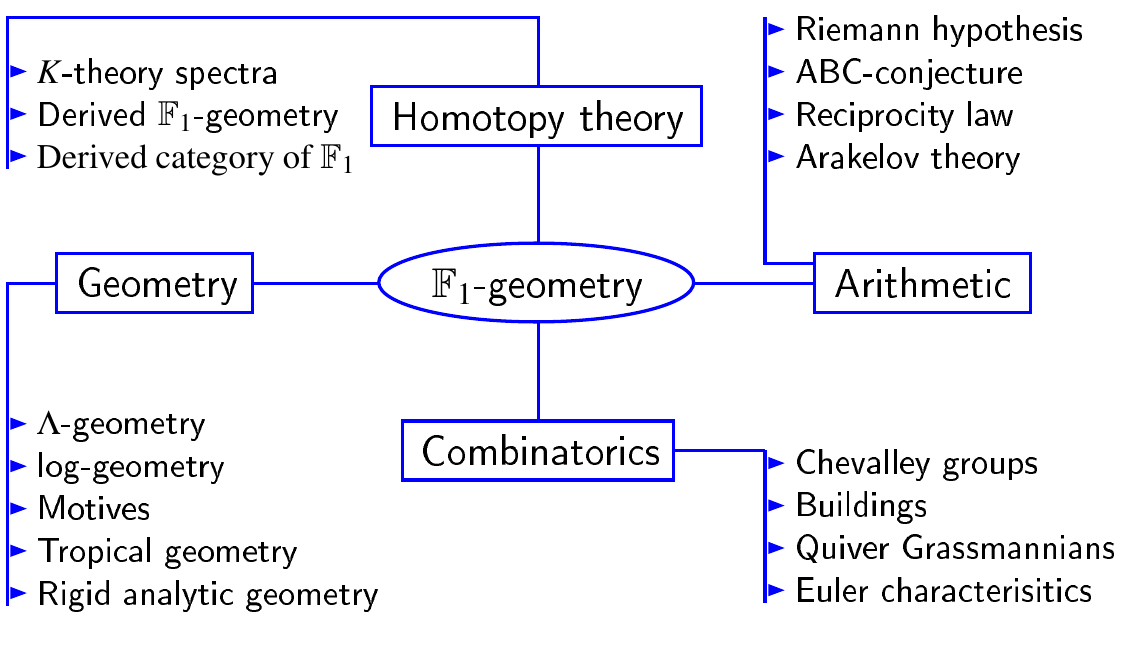}
 \caption{The connections of $\Fun$-geometry}\label{figure: applications}
\end{figure}

\subsection*{Arithmetic}

As explained in the previous sections, there are hopes to approach the Riemann hypothesis and the ABC-conjecture with $\Fun$-geometry. While the former remains unsolved, Mochizuki (\cite{Mochizuki12a}, \cite{Mochizuki12b}, \cite{Mochizuki12c}, \cite{Mochizuki12d}) has recently announced a proof of the ABC-conjecture. Though it does not seem to be connected to Smirnov's idea of a conjectural Hurwitz inequality for $\overz\to\P^1_{\Funsq}$, it borrows ideas from $\Fun$-geometry as occasional remarks (e.g.\ \cite[Remark 3.12.4 (iv)]{Mochizuki12c}) make clear.

The connection of the different models of $\overz$ to Arakelov theory\index{Arakelov theory} is apparent. However, the algebraically defined vector bundles on $\overz$ come naturally equipped with the $1$-norm (or, dually, with the $\infty$-norm) at the archimedean places while Arakelov geometry considers vector bundles together with the $2$-norm. This digression has not yet been explained in a satisfying way. We remark that the definition of $\overz$ as a locally blueprinted space in \cite{blueprints-mpi} yields a reinterpretation of the Euler factors of the Riemann zeta function, including the Gamma factor, in terms of integrals over the ideal space of the local blueprints.

\subsection*{Homotopy theory}

There are several connections of $\Fun$-geometry to homotopy theory. The expected isomorphism of the $K$-theory\index{K-theory@$K$-theory} of $\Fun$ with the stable homotopy of the sphere spectrum\index{Stable homotopy of spheres} has been established. Deitmar defines in \cite{Deitmar07} $K$-groups for commutative monoids and derives $K_i(\Fun)=\pi^\st_i(\S^0)$. This has been generalized by Chu and Morava (\cite{Chu-Morava10}) to non-commutative monoids, with a $G$-theory variant by Mahanta (\cite{Mahanta11}), and by Chu, Santhanam and the author (\cite{CLS12}) to monoidal schemes. We review the latter theory in the more general context of blue schemes in Section \ref{section: k-theory} of this paper. Calculations of vector bundles on $\P^n_\Fun$, Grothendieck groups $K_0(\P^n_\Fun)$ and Hall algebras can be found in \cite{Bothmer-Hinsch-Stuhler11} (von Bothmer, Hinsch and Stuhler), \cite{Szczesny12b} and \cite{Szczesny12c} (Szczesny) and the mentioned paper \cite{CLS12}.

Salch gives in \cite{Salch10} a definition of the bounded below derived category $D^+(\Fun)$ of $\Fun$ and shows that this category is equivalent to pointed simplicial sets. This might be interpreted as the Dold-Kan correspondence over $\Fun$. Consequently, one can interpret the stable homotopy category as the derived category over $\Fun$. Anevski reconstructs in \cite{Anevski11} the spectrum of $\Fun$ from the stable homotopy category. 

Banerjee applies in \cite{Banerjee12} Lurie's machinery of derived algebraic geometry (\cite{LurieI}, \cite{LurieII}, \cite{LurieIII}, et cetera) to define derived schemes over $\Fun$. 

Corti\~nas, Haesemeyer, Mark, Walker and Weibel extend in \cite{CHMWW11} and \cite{CHMWW12} the theory of monoidal schemes and apply it to calculate cdh-descent and positive characteristic $K$-theory for toric varieties. This can be seen as a hint of the role of motivic cohomology for the future of $\Fun$.

Finally we would like to mention that homotopy theory might be the only way to define a good substitute for sheaf cohomology. Note that Deitmar shows in \cite{Deitmar11b} that the methods of homological algebra extend to $\cO_X$-modules $\cM$ of an $\Fun$-scheme $X$, which yields a definition of cohomology \emph{sets} $H^i(X,\cM)$. A calculation of sheaf cohomology for $X=\P^1_\Fun$ shows that $H^1(\P^1_\Fun,\cO_X)$ is infinite-dimensional (see \cite{p1-calculation}). There is some hope that \v{C}ech cohomology produces smaller cohomology groups, but it is also possible that the only path towards (finite-dimensional) cohomology passes through homotopy theory.

\subsection*{Geometry}

Nowadays, there is a plenitude of highly developed geometric categories that were invented for one or the other reason. It might be the aim of a larger program to unify these geometries to a theory with common roots. Many signs hint towards homotopy theory, but a reinterpretation in terms of topological spectra comes with the loss of all the algebraic structure that makes ``algebraic geometries'' very manageable and computable. In some special cases, much can be seen already from the $\Fun$-perspective.

$\Lambda$-algebraic geometry\index{Lambda-scheme@$\Lambda$-scheme} after Borger (\cite{Borger09}) provides its own definition of an $\Fun$-scheme. Since a $\Lambda$-structure of a scheme is thought of as a descend datum to $\Fun$, one might change the viewpoint and impose that the base extension of an $\Fun$-scheme should come with a $\Lambda$-structure. We exploit this viewpoint in Section \ref{subsection: lambda-schemes} of this text. Similarly, logarithmic algebraic geometry\index{Log scheme} is concerned with schemes with extra structure, namely, a sheaf of monoids. It turns out that a log scheme is indeed closely connected to a blue scheme, see Section \ref{subsection: log schemes} of this text.

Following the line of thoughts of Manin in \cite{Manin95}, there should be a link between $\Fun$-schemes and motives\index{Motive} that explains the similar structure of their zeta functions\index{Zeta function}. There are further connections of motives and $\Fun$-schemes to non-commutative geometry, see Connes, Consani and Marcolli's papers \cite{Connes-Consani-Marcolli07} and \cite{Connes-Consani-Marcolli09}, Bejleri and Marcolli's paper \cite{Bejleri-Marcolli12} and Marcolli's appendix of Sujatha and Plazas' lecture notes \cite{Sujatha-Plazas11}.

Another theory of interest is analytic geometry over $\Fun$. Manin describes in \cite{Manin10} analytic geometry over $\Fun$ in terms of Habiro rings and connects this to certain power series in $q$ whose domains of convergence have a particular behaviour at roots of unity, i.e.\ over $\Funn$. 

It is further expected that analytic geometry over $\Fun$ connects to analytic geometry after Berkovich (\cite{Berkovich93} and \cite{Berkovich98}) and Huber (\cite{Huber96}) and to tropical geometry (see, for instance, \cite{Mikhalkin06}). In particular, Berkovich defines in \cite{Berkovich11} congruence schemes for monoids in order to explain skeletons of Berkovich spaces. Since the monoids that appear in Berkovich's theory are the underlying monoids of idempotent semirings and since tropical schemes are also defined in terms of idempotent semirings (see Mikhalkin's unpublished notes \cite{Mikhalkin10}), it is desirable to extend the notion of congruence schemes to semirings. There is hope for a theory of congruence schemes for blueprints, which contains the special cases of monoids with zero (Berkovich), sesquiads (Deitmar) and semirings. See Section \ref{subsection: congruence schemes} for more explanations on this. 

It seems that blueprints match and generalize Paugam's approach to global analytic geometry in \cite{Paugam09}, which yields a conceptual explanation of Berkovich spaces and Huber's adic spaces. Therefore one might hope for a functorial connection between analytic spaces over $\Fun$ on one side and Berkovich spaces, adic spaces and tropical schemes on the other side.

\subsection*{Combinatorics}

The combinatorial flavour of $\Fun$-geometry gives access to geometric explanations of combinatorial phenomena, and this might be used to develop computational methods for usual algebraic geometry.

Tits' idea of Chevalley groups over $\Fun$ and the formula $G(\Fun)=W$ found numerous remarks in the literature on $\Fun$, and this formula seemed to make sense from many viewpoints (see \cite{Borger09}, \cite{Cohn04}, \cite{Connes-Consani10a}, \cite{Connes-Consani11a}, \cite{Deitmar05}, \cite{Deitmar11a}, \cite{Kapranov-Smirnov}, \cite{Kurokawa05}, \cite{LL11a}, \cite{Manin95}, \cite{Soule04} and \cite{Toen-Vaquie09}). For the reason explained in the beginning of this text, it was not possible to combine the formula $G(\Fun)=W$ with a theory of group schemes over $\Fun$. In \cite{L12}, the author describes a theory of group schemes that circumvents the problem by considering two classes of morphisms. The longing for a more conceptual understanding of $\Fun$-models of Chevalley groups gave birth to the notion of a blueprint and the theory of blue schemes (\cite{blueprints1} and \cite{blueprints2}). See Part \ref{partII} and, in particular, Section \ref{section: chevalley groups over fun} for a description of 
this theory.

There are also some remarks on incidence geometries that come from $\Fun$-schemes, see \cite{Cohn04}, \cite{Pirashvili12} and \cite{Thas13}. It turns out that apartments\index{Apartment} of buildings\index{Building} (and other incidence geometries) are naturally attached to representations of Chevalley groups over $\Fun$ on projective space. A description of this is given in Section \ref{section: buildings} of this text.

The hope to calculate cohomological invariants of schemes depends largely on the existence of a (well-behaved) $\Fun$-model. Reineke proves in \cite{Reineke12} that every projective variety can be realized as a quiver Grassmannian. Szczesny defines in \cite{Szczesny12a} quiver representations over $\Fun$, which leads to the notion of $\Fun$-rational points of a quiver Grassmannian $\Gr_\ue(M)$. While this viewpoint might be too restrictive, the data attached to a quiver Grassmannian can be used to define an $\Fun$-model $\Gr_\ue(M)_\Fun$ in the language of blue schemes. In certain cases, the Euler characteristics of $\Gr_\ue(M)$ can be computed by counting certain points of $\Gr_\ue(M)_\Fun$. See Section \ref{section: euler cahracteristics and f1-rational points} for more explanations.

Finally, we would like to mention the following conjecture of Reineke in \cite{Reineke08}. Reineke shows that moduli spaces $X$ of quiver representations have a counting polynomial $N_X(q)$. Conjecture 8.5 in \cite{Reineke08} is that the coefficients $b_i$ of 
\[
 N_X(q) \quad = \quad \sum_{i=0}^n \ b_i\,(q-1)^i
\]
are non-negative. This might be proven by finding a suitable $\Fun$-model for the moduli space $X$.

%%%%%%%%%%%%%%%%%%%%%%%%%%%%%%%%%%%%%%%%%%%%%%%%%%%%%%%%%%%%%%%%%%%%%%%%%%%%%%%%%%%%%%%%%%%%%%%%%%%%%%%%%%%%%%%%%%%%%%%%%%%%%%%%%%%%%%%%%%%%%%%%%%%%%%%%%%%%%%%%%%%%%%%%%%%%%%%%%%%%%%%%%%%%%%%%%%%%%%%%%%%%%%%%%%%%%%%%%%%%
%%%%%%%%%%%%%%%%%%%%%%%%%%%%%%%%%%%%%%%%%%%%%%%%%%%%%%%%%%%%%%%%%%%%%%%%%%%%%%%%%%%%%%%%%%%%%%%%%%%%%%%%%%%%%%%%%%%%%%%%%%%%%%%%%%%%%%%%%%%%%%%%%%%%%%%%%%%%%%%%%%%%%%%%%%%%%%%%%%%%%%%%%%%%%%%%%%%%%%%%%%%%%%%%%%%%%%%%%%%%
%%%%%%%%%%%%%%%%%%%%%%%%%%%%%%%%%%%%%%%%%%%%%%%%%%%%%%%%%%%%%%%%%%%%%%%%%%%%%%%%%%%%%%%%%%%%%%%%%%%%%%%%%%%%%%%%%%%%%%%%%%%%%%%%%%%%%%%%%%%%%%%%%%%%%%%%%%%%%%%%%%%%%%%%%%%%%%%%%%%%%%%%%%%%%%%%%%%%%%%%%%%%%%%%%%%%%%%%%%%%

\part{The geometry of blueprints}
\label{partII}

\section*{Introduction}
\label{section: introduction}
\addcontentsline{toc}{section}{Introduction}

In this second part, we introduce blueprints and blue schemes, and we describe some of their applications. To make this text accessible to the non-expert, we include the basic definitions and aim for a non-technical and partly simplified presentation.

A blueprint can be seen as a multiplicatively closed subset $A$ (with $0$ and $1$) of a semiring $R$. The addition of the semiring allows us to express relations $\sum a_i=\sum b_j$ between sums of elements $a_i$ and $b_j$ of $A$. In the extreme case $A=R$,  the blueprint is a semiring, and in the other extreme case $A\subset R$ where $R=\Z[A]/(0)$ is the semigroup ring of $A$ (with the zeros of $A$ and $R$ identified), the blueprint behaves like a multiplicative monoid. 

Its geometric counterpart, the notion of a blue scheme, includes monoidal schemes and usual schemes as subclasses. Beyond these well-known objects, it contains also a class of semiring schemes as well as $\Fun$-models for all schemes of finite type. This makes the category of blue schemes flexible enough to treat several problems of $\Fun$-geometry. 

We review the basic definitions in the Section \ref{section: definitions}. The theory behaves largely in complete analogy with usual scheme theory. The only new aspect is that the presheaf of localizations of a blueprint is not a sheaf on the prime spectrum of the blueprint. The required sheafification leads to an equivalence between affine blue schemes with the proper subclass of so-called global blueprints. Once aware of this phenomenon, one can proceed with developing properties of scheme theory, such as a locally algebraic characterization of morphisms or fibre products. We conclude Section \ref{section: definitions} with a description of the $\Proj$-functor for graded blueprints and projective varieties. %All this is covered in more detail in the author's paper \cite{blueprints1}. as explained in the joint paper \cite{LL12} of L\'opez Pe\~na and the author.

In the following sections, we review and preview a number of application of blueprints to $\Fun$-geometry. We list references for the results in this exposition below.

\subsection*{Combinatorics}
 While monoidal schemes and, more generally, blue schemes reflect naturally the combinatorical structures that are expected in $\Fun$-geometry, morphisms of monoidal resp.\ blue schemes are not suited to capture group laws or to relate to the predicted sets of $\Fun$-rational points. In Section \ref{section: chevalley groups over fun}, we explain the notion of the rank space, which reflects the set of $\Fun$-rational points together with an intrinsic schematic structure. This leads to the class of Tits morphisms, which is flexible enough to descend group laws and group actions to $\Fun$ and whose close relation to the rank space yields the expected set of $\Fun$-rational points in a functorial way. 

 In Section \ref{section: chevalley groups over fun}, we briefly review the notion of a Tits-Weyl model and apply it to split reductive groups and to their parabolic and Levi subgroups. In Section \ref{section: buildings}, we explain how Coxeter complexes and buildings together with the usual group actions appear naturally in terms of representations of Tits-Weyl models. In Section \ref{section: euler cahracteristics and f1-rational points}, we establish for a certain class of integral varieties the connection between the complex Euler characteristic and the number of $\Fun$-rational points.

\subsection*{Arithmetic}
 Arakelov theory gives a clear expectation of how the compactification of $\Spec\Z$ should look like, but the obstacle that the stalk at infinity $\bigl\{a\in\Q\bigl|\norm a_\infty\leq1\bigr\}$ is not a ring requires an extension of scheme theory to make sense of the arithmetic curve $\overz$. In Section \ref{section: the arithmetic line}, we will use blueprints to define candidates for $\overz$ and, more generally, for $\overok$ where $O_K$ denoted a ring of integers in a number field $K$.

\subsection*{Homotopy theory}
 The expected interpretation of the stable homotopy of the sphere spectrum as the $K$-theory of the absolute point can be made rigorous by applying Quillen's $Q$-construction to the category of vector bundles over $\Spec\Fun$. The definition of $K$-theory can be extended to all blue schemes. Some care is required since locally projective sheaves are in general not locally free and since not all monomorphisms (epimorphisms) are kernels (cokernels). In Section \ref{section: k-theory}, we explain all this and review the main results for monoidal schemes.

\subsection*{Geometry}
 Though the different notions of $\Fun$-geometry are, strictly speaking, not equivalent to each other, they might be seen as complementing shades of the same circle of phenomena. In some cases, the connection between different $\Fun$-geometries can be made precise, at least for certain subclasses of $\Fun$-schemes. In Sections \ref{subsubsection: relation to monoids} and \ref{subsubsection: Idempotent semirings and sesquiads}, we show that monoids, sesquiads and $\B_1$-algebras are blueprints. In Section \ref{subsection: lambda-schemes}, we explain that a certain class of $\Lambda$-schemes comes from blue schemes that can be equipped with Frobenius operations. In Section \ref{subsection: relative schemes after toen and vaquie}, we interpret blue schemes as schemes relative to categories of $\cO_X$-modules in the sense of To\"en and Vaqui\'e. In Section \ref{subsection: log schemes}, we construct log schemes from blue schemes and characterize the class of log schemes that we obtain. In particular, all fine 
log schemes that have 
a Zariski local atlas come from blue schemes. In Section \ref{subsection: congruence schemes}, we introduce congruences for blueprints and describe three different approaches towards the definition of a structure sheaf on the space of prime congruences.

\subsection*{References}
The references to this text, as existent, are the following. Sections \ref{subsection: blueprints}--\ref{subsection: globalizations} are covered in the author's paper \cite{blueprints1}. Sections \ref{subsection: projective space} and \ref{subsection: closed subschemes} are from the joint paper \cite{LL12} of L\'opez Pe\~na and the author. Section \ref{section: chevalley groups over fun} summarizes the author's paper \cite{blueprints2}, with the exception of Theorem \ref{thm: chevalley groups over f1}, which is proven in such generality due to an additional idea of Markus Reineke (unpublished). The results of Sections \ref{section: buildings} and \ref{section: euler cahracteristics and f1-rational points} are not yet published. The content of Section \ref{section: the arithmetic line} is contained in the author's text \cite{blueprints-mpi} in the special case of $\overz$. The content of Section \ref{section: k-theory} is contained in the joint paper \cite{CLS12} of Chu, Santhanam and the author in the 
special case of monoidal schemes. The 
content of Sections \ref{subsection: lambda-schemes}, \ref{subsection: log schemes} and \ref{subsection: congruence schemes} is not yet published. Section \ref{subsection: relative schemes after toen and vaquie} summarizes the author's text \cite{L12b}.

\subsection*{Acknowledgements}
I would like to thank James Borger and Christophe Soul\'e for their remarks on parts of this text.

%%%%%%%%%%%%%%%%%%%%%%%%%%%%%%%%%%%%%%%%%%%%%%%%%%%%%%%%%%%%%%%%%%%%%%%%%%%%%%%%%%%%%%%%%%%%%%%%%%%%%%%%%%%%%%%%%%%%%%%%%%%%%%%%%%%%%%%%%%%%%%%%%%%%%%%%%%%%%%%%%%%%%%%%%%%%%%%%%%%%%%%%%%%%%%%%%%%%%%%%%%%%%%%%%%%%%%%%%%%%
%%%%%%%%%%%%%%%%%%%%%%%%%%%%%%%%%%%%%%%%%%%%%%%%%%%%%%%%%%%%%%%%%%%%%%%%%%%%%%%%%%%%%%%%%%%%%%%%%%%%%%%%%%%%%%%%%%%%%%%%%%%%%%%%%%%%%%%%%%%%%%%%%%%%%%%%%%%%%%%%%%%%%%%%%%%%%%%%%%%%%%%%%%%%%%%%%%%%%%%%%%%%%%%%%%%%%%%%%%%%

\section{Basic definitions}
\label{section: definitions}

In this section, we introduce the notion of a blueprint and the associated scheme theory, which generalizes both usual schemes and monoidal schemes (in the sense of Deitmar, \cite{Deitmar05}). We will further obtain semiring schemes as a special case, i.e.\ schemes whose sheaf of sections forms semirings, and schemes more akin to $\Fun$-geometry like $\Fun$-models of algebraic groups or Grassmannians. We conclude this first section with a description of projective spaces over a blueprint and projective schemes as closed subschemes of projective space.

\subsection{Blueprints}
\label{subsection: blueprints}
 
\noindent
 By a \emph{monoid with zero}\index{Monoid with zero}, we mean in this text always a multiplicatively written commutative semigroup $A$ with a neutral element $1$ and an absorbing element $0$, which are characterized by the properties $1\cdot a=a$ and $0\cdot a=0$ for all $a\in A$. A \emph{morphism of monoids with zero}\index{Morphism!of monoids with zero} is a multiplicative map $f:A_1\to A_2$ that maps $1$ to $1$ and $0$ to $0$. We denote the category of monoids with zero by $\cM_0$.

\begin{df}
 A \emph{blueprint $B$}\index{Blueprint} is a monoid $A$ with zero together with a \emph{pre-addition}\index{Pre-addition} $\cR$, i.e.\ $\cR$ is an equivalence relation on the semiring $\N[A]=\{\sum a_i|a_i\in A\}$ of finite formal sums of elements of $A$ that satisfies the following axioms (where we write $\sum a_i\=\sum b_j$ whenever $(\sum a_i,\sum b_j)\in\cR$):
\begin{enumerate}
 \item\label{ax1} The relation $\cR$ is additive and multiplicative, i.e.\ if $\sum a_i\=\sum b_j$ and $\sum c_k\=\sum d_l$, then $\sum a_i+\sum c_k\=\sum b_j+\sum d_l$ and $\sum a_ic_k\=\sum b_jd_l$.
 \item\label{ax2} The absorbing element $0$ of $A$ is in relation with the zero of $\N[A]$, i.e.\ $0\=(\text{empty sum})$.
 \item\label{ax3} If $a\= b$, then $a=b$ (as elements in $A$).
\end{enumerate}
 A \emph{morphism $f:B_1\to B_2$ of blueprints}\index{Morphism!of blueprints} is a multiplicative map $f:A_1\to A_2$ between the underlying monoids of $B_1$ and $B_2$ with $f(0)=0$ and $f(1)=1$ such that for every relation $\sum a_i\=\sum b_j$ in the pre-addition $\cR_1$ of $B_1$, the pre-addition $\cR_2$ of $B_2$ contains the relation $\sum f(a_i)\=\sum f(b_j)$. Let $\bp$ be the category of blueprints.
\end{df}

\begin{rem}
 Note that the above definition follows the convention of the papers \cite{blueprints2}, \cite{LL12}, \cite{blueprints-mpi} and \cite{L12b}. In the sense of \cite{blueprints1}, the definition above agrees with a \emph{proper} blueprint \emph{with zero}\index{Blueprint!proper with zero} from \cite{blueprints1}. We call a blueprint in the sense of \cite{blueprints1} a \emph{general blueprint}\index{Blueprint!general} (cf.\ Section \ref{subsection: log schemes}). See Section 1.1 of \cite{blueprints2} for more explanations on this notation.
\end{rem}

In the following, we write $B=\bpquot A\cR$ for a blueprint $B$ with underlying monoid $A$ and pre-addition $\cR$. We adopt the conventions used for rings: we identify $B$ with the underlying monoid $A$ and write $a\in B$ or $S\subset B$ when we mean $a\in A$ or $S\subset A$, respectively. Further, we think of a relation $\sum a_i\=\sum b_j$ as an equality that holds in $B$ (without the elements $\sum a_i$ and $\sum b_j$ being defined, in general). 

Given a set $S$ of relations, there is a smallest equivalence relation $\cR$ on $\N[A]$ that contains $S$ and satisfies \eqref{ax1} and \eqref{ax2}. If $\cR$ satisfies also \eqref{ax3}, then we say that $\cR$ is the pre-addition generated by $S$, and we write $\cR=\gen S$. In particular, every monoid $A$ with zero has a smallest pre-addition $\cR=\gen\emptyset$.

\subsubsection{Relation to rings and semirings}
\label{subsubsection: relation to rings}

The idea behind the definition of a blueprint is that it is a blueprint of a ring (in the literal sense): given a blueprint $B=\bpquot A\cR$, one can construct the ring $B_\Z^+=\Z[A]/I(\cR)$ where $I(\cR)$ is the ideal $\{\sum a_i-\sum b_j\in \Z[A]|\sum a_i\=\sum b_j \text{ in }\cR\}$. We can extend a blueprint morphism $f:B_1\to B_2$ linearly to a ring homomorphism $f_\Z^+:B_{1,\Z}^+\to B_{2,\Z}^+$ and obtain a functor $(\blanc)^+_\Z:\bp\to\Rings$ from blueprints to rings.

Similarly, we can form the quotient $B^+=\N[A]/\cR$, which inherits the structure of a semiring by Axiom \eqref{ax1}. This defines a functor $(\blanc)^+:\bp\to\SRings$ from blueprints to semirings. 

On the other hand, semirings can be seen as blueprints: given a (commutative and unital) semiring $R$, we can define the blueprint $B=\bpquot A\cR$ where $A=R^\bullet$ is the underlying multiplicative monoid of $R$ and $\cR=\{\sum a_i\=\sum b_j|\sum a_i=\sum b_j\text{ in }R\}$. Under this identification, semiring homomorphisms are nothing else than blueprint morphisms, i.e.\ we obtain a full embedding $\iota_\cS:\SRings\to\bp$ from the category of semirings into the category of blueprints. This allows us to identify rings with a certain kind of blueprints, and we can view blueprints as a generalization of rings and semirings. Accordingly, we call blueprints in the essential image of $\iota_\cS$ \emph{semirings} and we call blueprints in the essential image of the restriction $\iota_\cR:\Rings\to\bp$ of $\iota_\cS$ \emph{rings}.

The semiring $B^+$ and the ring $B^+_\Z$ come together with canonical maps $B\to B^+$ and $B\to B_\Z^+$, which are blueprint morphisms.

\begin{rem}
 Note that we use the symbol ``$+$'' whenever we want to stress that the pre-addition is indeed an addition, i.e.\ the blueprint in question is a semiring. Since colimits of blueprints do not preserve additions, we are forced to invent notations like $\Z[T]^+$, $B\Sotimes_\Z C$, $\SA_\Z^n$, $\SP^n_\Z$, $\SG_{m,\Z}^n$ to distinct the constructions in usual algebraic geometry from the analogous constructions for blueprints and blue schemes.
\end{rem}

\subsubsection{Relation to monoids}\label{subsubsection: relation to monoids}
Another important class of blueprints are monoids with zero. Namely, a monoid $A$ with zero defines the blueprint $B=\bpgenquot A\emptyset$. A morphism $f:A_1\to A_2$ of monoids with zero is nothing else than a morphism of blueprints $f:\bpgenquot {A_1}\emptyset\to \bpgenquot {A_2}\emptyset$. This defines a full embedding $\iota_{\cM_0}:{\cM_0}\to\bp$. This justifies that we may call blueprints in the essential image of $\iota_{\cM_0}$ \emph{monoids with zero}\index{Monoid with zero} and that we identify in this case $A$ with $B=\bpgenquot A\emptyset$.

\subsubsection{Idempotent semirings and sesquiads}
\label{subsubsection: Idempotent semirings and sesquiads}

Lescot develops in \cite{Lescot09}, \cite{Lescot11} and \cite{Lescot12} an algebraic geometry for idempotent semirings, i.e.\ semirings $R$ with $a+a=a$ for all elements $a\in R$. This comes together with the usual notion of a semiring morphism. As explained in Section \ref{subsubsection: relation to rings}, we can consider (idempotent) semirings as blueprints and can adopt the notation of blueprints to this case. The semiring $\B_1=\bpgenquot{\{0,1\}}{1+1\=1}$ is initial in the category of idempotent semirings. Therefore an idempotent semiring is the same as a semiring that is a $\B_1$-algebra.

Lescot compares $\B_1$-algebras with $\Fun$-algebras and finds a series of analogies. This might find a deeper explanation by the theory of blueprints, which contains both monoids and semirings over $\B_1$ as a special case. Namely, $\B_1$ is an $\Fun$-algebra (in the category of blueprints) and the base extension functor $\blanc\otimes_\Fun\B_1$ allows us to pass from from monoids to $\B_1$-algebras.

%Lescot defines spectra of $\B_1$-algebras and compares them with monoi\-dal schemes and finds a series of analogies. This might find a deeper explanation by the theory of blue schemes, which contains both monoidal schemes and semiring schemes over $\B_1$ as a special case. Namely, $\B_1$ is an $\Fun$-algebra (in the category of blueprints) and the base extension functor $\blanc\otimes_\Fun\B_1$ allows us to pass from monoidal schemes to $\B_1$-schemes.

Sesquiads are another algebraic structure, which occurs in Deitmar's paper \cite{Deitmar11a} on congruence schemes. It turns out that a sesquiad is nothing else than a cancellative blueprint, see \cite[Section 1.8]{blueprints1} for details. Also confer the remarks on congruence schemes in Section \ref{subsection: congruence schemes} of this paper.

\subsubsection{Cyclotomic field extensions}\label{subsubsection: valuation blueprints}
We give some other examples, which are of interest for the purposes of this text. The initial object in $\bp$ is the monoid $\Fun=\{0,1\}$, the so-called \emph{field with one element}\index{Field with one element}. More general, we define the \emph{cyclotomic field extension $\Funn$ of $\Fun$}\index{Field with one element!cyclomtomic extensions} as the blueprint $B=\bpquot A\cR$ where $A=\{0\}\cup\mu_n$ is the union of $0$ with a cyclic group $\mu_n=\{\zeta_n^i|i=1,\dotsc,n\}$ of order $n$ with generator $\zeta_n$ and where $\cR$ is generated by the relations $\sum_{i=0}^{n/d}\zeta_n^{di}\=0$ for every proper divisor $d$ of $n$. The associated ring of $\Funn$ is the ring $\Z[\zeta_n]$ of integers of the cyclotomic field extension $\Q[\zeta_n]$ of $\Q$ that is generated by the $n$-th roots of unity. 

Note that this breaks with the convention of $\Fun$-literature that $\Funn$ should be a monoid and its associated ring should be isomorphic to $\Z[T]/(T^n-1)$.

\subsection{Ideals and units}
\label{subsection: ideals and units}

\noindent
Let $B=\bpquot A\cR$ be a blueprint. An \emph{ideal}\index{Ideal of a blueprint} of $B$ is a subset $I$ satisfying that $IB\subset I$ and that for every additive relation of the form $\sum a_i+ c\=\sum b_j$ in $B$ with $a_i,b_j\in I$, we have $c\in I$. The relation $0\=\text{(empty sum)}$ implies that every ideal $I$ contains $0$.

If $f:B\to C$ is a blueprint morphism, then $I=f^{-1}(0)$ is an ideal of $B$ (cf.\ \cite[Prop.\ 2.14]{blueprints1}). The \emph{quotient $B/I$}\index{Quotient of a blueprint} of a blueprint $B$ by an ideal $I$ is characterized by the following universal property, cf.\ \cite[Prop.\ 2.13]{blueprints1}.

\begin{prop}
 Let $I$ be an ideal of a blueprint $B$. Then there exists a surjective blueprint morphism $f:B\to B/I$ with $I=f^{-1}(0)$ such that any other blueprint morphism $h:B\to C$ with $I\subset h^{-1}(0)$ factors into $h=g\circ f$ for a unique blueprint morphism $g:B/I\to C$.
\end{prop}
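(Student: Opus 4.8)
The plan is to construct $B/I$ explicitly as a blueprint in the sense of the definition, exhibit the quotient morphism $f$, and then verify the universal property directly. First I would define the underlying monoid of $B/I$: take $A' = (A \setminus I) \cup \{0\}$ and declare $a \cdot_{A'} a' = a a'$ whenever the product in $A$ lies outside $I$, and $a \cdot_{A'} a' = 0$ otherwise. One checks this is a monoid with zero: associativity follows since $I$ is an ideal (so if one of the partial products falls into $I$, so does the triple product, because $IB \subset I$), and $1 \notin I$ because otherwise $1 = 1 \cdot a \in I$ for all $a$, forcing $I = A$, a case one handles separately (then $B/I$ is the zero blueprint). Let $\pi : A \to A'$ be the projection sending $I$ to $0$ and fixing everything else.

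Next I would equip $A'$ with a pre-addition. The natural choice is the pre-addition $\cR'$ on $\N[A']$ generated by the image under $\pi$ of all relations in $\cR$, together with all relations $\sum \pi(a_i) + c \= \sum \pi(b_j)$ forced by the ideal condition — but in fact it is cleaner to define $\cR'$ as the pre-addition \emph{generated} (in the sense of $\langle \blanc \rangle$ from Section \ref{subsection: blueprints}) by the set $S = \{\sum \pi(a_i) \= \sum \pi(b_j) \mid \sum a_i \= \sum b_j \text{ in } \cR\}$. By the discussion after the definition of a blueprint, there is a smallest equivalence relation containing $S$ and satisfying axioms \eqref{ax1} and \eqref{ax2}; the content to check is axiom \eqref{ax3}, i.e.\ that if $a \= b$ holds in $\cR'$ for $a, b \in A'$, then $a = b$. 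This is where the ideal axiom on $I$ is used essentially: a chain of relations in $\cR'$ collapsing two monoid elements lifts to a chain in $\N[A]$ modulo terms that get sent into $I$, and the defining property of an ideal (that $c \in I$ whenever $\sum a_i + c \= \sum b_j$ with all $a_i, b_j \in I$) is exactly what prevents a spurious identification $a \= b$ from arising. I would then set $B/I = \bpquot{A'}{\cR'}$ and let $f : B \to B/I$ be $\pi$, which is a blueprint morphism by construction of $\cR'$, is surjective, and satisfies $f^{-1}(0) = I$ (one inclusion is the definition of $\pi$; the reverse uses \eqref{ax3} for $\cR'$ to see that no element outside $I$ maps to $0$).

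For the universal property, given $h : B \to C$ with $I \subset h^{-1}(0)$, define $g : B/I \to C$ on monoids by $g(\pi(a)) = h(a)$; this is well-defined because $\pi(a) = \pi(a')$ with $a \ne a'$ can only happen if both lie in $I$, whence $h(a) = h(a') = 0$, and it is multiplicative because when $aa' \in I$ we get $h(a)h(a') = h(aa') = 0 = g(\pi(a) \cdot_{A'} \pi(a'))$. That $g$ respects pre-additions follows since $\cR'$ is generated by $S = \pi(\cR)$, and $h$ already sends $\cR$ into $\cR_C$; a morphism out of a blueprint defined by generators is determined by, and exists as soon as, the generating relations are respected. Uniqueness of $g$ is immediate from surjectivity of $f$, since $g = h \circ f^{-1}$ on underlying sets is forced.

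The main obstacle I expect is verifying axiom \eqref{ax3} for the generated pre-addition $\cR'$, i.e.\ proving that $B/I$ is genuinely a blueprint (a proper one, in the terminology of the Remark) rather than merely a monoid-with-a-relation. Concretely, one must show that passing to the quotient does not force two distinct elements of $A \setminus I$ to become equal; the argument needs a careful induction on the length of a relation chain in $\N[A']$, lifting it to $\N[A]$ and tracking which summands are killed, and invoking the ideal axiom at each step where a summand collapses to $0$. Everything else is the expected routine bookkeeping with generated pre-additions; and of course, since this is \cite[Prop.\ 2.13]{blueprints1}, one may alternatively just cite that reference and sketch only the construction.
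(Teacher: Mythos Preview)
Your construction of the underlying monoid $A' = (A \setminus I) \cup \{0\}$ is too naive, and the claim that the ideal axiom ``prevents a spurious identification $a \= b$ from arising'' is false. The ideal axiom only guarantees that no element outside $I$ is forced to become equal to $0$; it says nothing about two elements of $A \setminus I$ being forced to coincide. Concretely, take $B = \Z$ (as a ring, hence a blueprint) and $I = 2\Z$. This is an ideal in the blueprint sense. Your $A'$ is the set of odd integers together with $0$. But the relation $1 + 2 \= 3$ in $B$ pushes forward under $\pi$ to $1 + 0 \= 3$, i.e.\ $1 \= 3$ in $\cR'$, so axiom \eqref{ax3} fails for $\bpquot{A'}{\cR'}$: the object you wrote down is not a blueprint. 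The correct quotient here is of course $\Z/2\Z$, whose underlying monoid has only two elements.

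What is actually needed is to pass to the congruence $\sim_I$ generated by $I$ (in the sense of Section \ref{subsection: congruence schemes} of the paper, or equivalently \cite[Section 2]{blueprints1}): one starts from the equivalence relation identifying all of $I$ with $0$, closes up under the pre-addition $\cR$, restricts back to $A$, and takes the quotient monoid $A/\!\!\sim_I$. Only then does the induced pre-addition satisfy axiom \eqref{ax3}, and one gets $B/I = \bpquot{(A/\!\!\sim_I)}{\cR'}$. The ideal axiom is then used exactly where you said---but to a different end: it shows that the equivalence class of $0$ under $\sim_I$ is precisely $I$, so that $f^{-1}(0) = I$. Your verification of the universal property is fine once the construction is corrected, since $h$ with $I \subset h^{-1}(0)$ automatically respects the full congruence $\sim_I$. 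The paper itself does not prove the proposition but defers to \cite[Prop.\ 2.13]{blueprints1}; the construction there is via congruences as just described.
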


A \emph{multiplicative set}\index{Multiplicative set} is a subset $S$ of $B$ that is closed under multiplication and contains $1$. An ideal $\fp$ of $B$ is a \emph{prime ideal}\index{Prime ideal} if its complement in $B$ is a multiplicative set. A \emph{maximal ideal}\index{Maximal ideal} is an ideal that is maximal for the inclusion relation and not equal to $B$ itself. A blueprint is \emph{local}\index{Blueprint!local} if it has a unique maximal ideal. A morphism $f:B_1\to B_2$ between local blueprints is \emph{local}\index{Morphism!local} if it maps the maximal ideal of $B_1$ to the maximal ideal of $B_2$.

Let $B=\bpquot A\cR$ be a blueprint. The \emph{units of $B$}\index{Units of a blueprint} is the multiplicative subgroup $B^\times$ of all invertible elements in $A$. A \emph{blue field}\index{Blue field} is a blueprint $B=\bpquot A\cR$ with $B^\times=A-\{0\}$. For an arbitrary blueprint $B=\bpquot A\cR$, the \emph{unit field of $B$}\index{Unit field} is the blue field $B^{\px}=\bpquot{(B^\times\cup\{0\})}{\cR^\px}$ where $\cR^\px=\cR\vert_{B^\times\cup\{0\}}$ is the restriction of $\cR$ to the submonoid $B^\times\cup\{0\}$ of $A$. The unit field comes together with a canonical inclusion $u:B^\px\to B$ of blueprints. Note that $B$ is a blue field if and only if the inclusion $u:B^\px\to B$ is an isomorphism of blueprints.

If $B$ is a ring, all the above definitions specialize to the corresponding definitions for rings. The same is true for monoids with zero. The following well-known facts for rings generalize to blueprints: maximal ideals are prime ideals; if $B$ is a local blueprint, then its maximal ideal $\fm$ is the complement of $B^\times$ in $B$; an ideal $\fp$ of $B$ is a prime ideal if and only if $B/\fp$ is without \emph{zero-divisors}\index{Zero-divisor}, i.e.\ non-zero elements $a$ and $b$ with $ab=0$; an ideal $\fm$ of $B$ is a maximal ideal if and only if $B/\fm$ is a blue field.

\subsection{Blue schemes}
\label{subsection: locally blueprinted spaces}

\noindent
A \emph{blueprinted space}\index{Blueprinted space} is a topological space $X$ together with a sheaf $\cO_X$ in $\bp$. A \emph{morphism of blueprinted spaces}\index{Morphism!of blueprinted spaces} is a continuous map together with a sheaf morphism. Since the category $\bp$ contains small colimits, the stalks $\cO_{X,x}$ in points $x\in X$ exist, and a morphism of blueprinted spaces induces morphisms between stalks. A \emph{locally blueprinted space}\index{Locally blueprinted space} is a blueprinted space whose stalks $\cO_{X,x}$ are local blueprints with maximal ideal $\fm_x$ for all $x\in X$. A \emph{local morphism}\index{Morphism!local} between locally blueprinted spaces is a morphism of blueprinted spaces that induces local morphisms of blueprints between all stalks. We denote the resulting category by $\bpspaces$.

Let $x$ be a point of a locally blueprinted space $X$. We define the \emph{residue field of $x$}\index{Residue field} as the blue field $\kappa(x)=\cO_{X,x}/\fm_x$. A local morphism of locally blueprinted spaces induces morphisms between residue fields.

The \emph{spectrum of a blueprint $B$}\index{Spectrum of a blueprint} is defined analogously to the case of rings or monoids with zero: $\Spec B$ is the locally blueprinted space whose underlying set $X$ is the set of all prime ideals of $B$, endowed with the Zariski topology, and whose structure sheaf $\cO_X$ consists of localizations of $B$. For more details, see Section 3.1 of \cite{blueprints1}. A \emph{blue scheme}\index{Blue scheme} is a locally blueprinted space that is locally isomorphic to spectra of blueprints. We denote the full subcategory of $\bpspaces$ whose objects are blue schemes by $\BSch$.

\begin{ex}[Affine spaces]
 Let $B=\Fun[T_1,\dotsc,T_n]$ be the monoid $\{0\}\cup\{\,T_1^{e_1}\dotsb T_n^{e_n} \, | \, e_1,\dotsc,e_n\geq0\,\}$, which is the free blueprint in $T_1,\dotsc,T_n$ over $\Fun$. Since the associated ring $B_\Z^+$ is the polynomial ring $\Z[T_1,\dotsc,T_n]^+$, we define $\A^n_\Fun=\Spec\Fun[T_1,\dotsc,T_n]$.

 We describe the underlying topological space of $\A^n_\Fun$ for $n=1$ and $n=2$. The underlying topological space of the \emph{affine line $\A^1_{\Fun}=\Spec\Fun[T]$ over $\Fun$}\index{Affine line over $\Fun$} consists of the prime ideals $(0)=\{0\}$ and $(T)=\{0\}\cup\{T^i\}_{i>0}$, the latter one being a specialization of the former one. The \emph{affine plane $\A^2_\Fun=\Spec\Fun[S,T]$ over $\Fun$}\index{Affine plane over $\Fun$} has four points $(0)$, $(S)$, $(T)$ and $(S,T)$. More generally, the prime ideals of $\Fun[T_1,\dotsc,T_n]$ are of the form $\fp_I=(T_i)_{i\in I}$ where $I$ ranges through the subsets of $\{1,\dotsc,n\}$. The affine line and the affine plane are illustrated in Figure \ref{figure: a1,a2}. The lines between two points indicate that the point at the top is a specialization of the point at the bottom. 
 \begin{figure}[h]
  \begin{center}
   \includegraphics{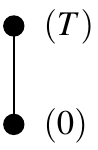} \hspace{2cm} \includegraphics{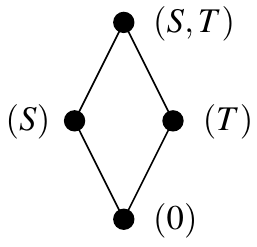}   
   \caption{The affine line and the affine plane over $\Fun$}
   \label{figure: a1,a2}
  \end{center} 
 \end{figure}
\end{ex}

\begin{ex}
 The example that initially led to the development of blueprints and blue schemes is the spectrum $\SL_{2,\Fun}=\Spec B$ of the coordinate blueprint 
 \[ 
  B \quad = \quad \bpgenquot{\Fun[T_1,\dotsc,T_4]}{T_1T_4\= T_2T_3+1}
 \]
 of $\SL_2$ over $\Fun$. Since the canonical surjection $\Fun[T_1,T_2,T_3,T_4]\to B$ defines a closed embedding $\SL_{2,\Fun}\hookrightarrow \A^4_\Fun$, the points of $\SL_{2,\Fun}$ are all of the form $\fp_I=(T_i)_{i\in I}$ where $I$ is a subset of $\{1,2,3,4\}$. If $\fp_I$ contains with $T_1$ and $T_2$ also $T_1T_4$ and $T_2T_3$ (since $\fp_I B=\fp_I$), and $T_1T_4\= T_2T_3+1$ implies that $1\in\fp_I$. This means that $\fp_I=B$, i.e.\ $\fp_I$ is not a prime ideal, if $\{1,2\}\subset I$. The same reasoning applies to subsets $I$ that contain $\{1,3\}$, $\{2,4\}$ or $\{3,4\}$. To not include any of these sets is the only restraint on $I$ for $\fp_I$ to be a prime ideal. Thus the points of $\SL_{2,\Fun}$ are as follows:
 \begin{center}
   \includegraphics{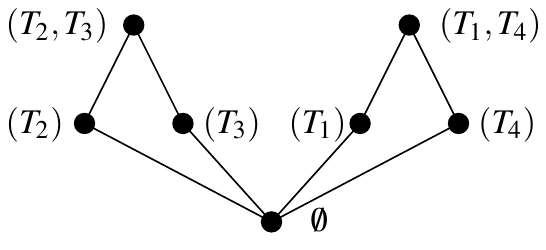}
 \end{center}

 Note that the two closed points $(T_2,T_3)$ and $(T_1,T_4)$ correspond to the diagonal torus $\tinymat \ast00\ast$ ($T_2=T_3=0$) and the antidiagonal torus $\tinymat 0\ast\ast0$ ($T_1=T_4=0$) in $\SL_2$, which are the elements of the Weyl group of $\SL_2$. This observation led to a realization of Jacques Tits' idea of Chevalley groups over $\Fun$. We will describe the $\Fun$-theory of Chevalley groups in Section \ref{section: chevalley groups over fun}.
\end{ex}

\subsubsection{Relation to usual schemes and monoidal schemes}
\label{subsubsection: Relation to usual schemes and monoidal schemes}

Since the definition of a blue scheme is formally the same as the definition of a usual scheme (in the sense of Grothendieck), the full embedding $\iota_\cR:\Rings\to \bp$ defines a full embedding of the category $\Sch^+_\Z$ of usual schemes into the category of blueprints. The full embedding $\iota_{\cM_0}:{\cM_0}\to\bp$ defines a full embedding of the category of \emph{monoidal schemes} (or \emph{$\Fun$-schemes} in the sense of Deitmar, \cite{Deitmar05}, or \emph{$\cM_0$-schemes} in the sense of Connes and Consani, \cite{Connes-Consani10a}) into the category of blue schemes. We call the objects in the corresponding essential images \emph{Grothendieck schemes}\index{Grothendieck scheme} resp.\ \emph{monoidal schemes}\index{Monoidal scheme}.

A Grothendieck scheme $X$ is characterized by the property that for all open subsets $U$ of $X$, $\cO_X(U)$ is a ring. Similarly, a monoidal scheme $X$ is characterized by the property that for all open subsets $U$ of $X$, $\cO_X(U)$ is a monoid with zero. A \emph{semiring scheme}\index{Semiring scheme} is a blue scheme $X$ such that for all open subsets $U$ of $X$, $\cO_X(U)$ is a semiring. We denote the full subcategory of semiring schemes in $\BSch$ by $\Sch_\N^+$.

\subsubsection{Base extension to semiring schemes and Grothendieck schemes}
\label{subsubsection: base extension to semirings schemes and Grothendieck schemes}

The ``base extension''-functors $(\blanc)^+$ and $(\blanc)_\Z^+$ from blueprints to semirings resp.\ rings extend to functors $(\blanc)^+:\BSch\to\Sch_\N$ resp.\ $(\blanc)_\Z^+:\BSch\to\Sch_\Z$. If $X$ is a blue scheme, then the base extensions to (semi)rings come together with the base change morphisms $X^+\to X$ and $X^+_\Z\to X$.

\subsection{Globalizations and fibre products}
\label{subsection: globalizations}
\label{subsection: properties of blue schemes}

A morphism $f:B\to C$ of blueprints induces a local morphism $f^\ast:\Spec C\to\Spec B$. A \emph{locally algebraic morphism of blue schemes}\index{Morphism!locally algebraic} is a morphism that is locally induced by morphisms between spectra of blueprints.

In contrast to usual algebraic geometry, the functor $\Spec:\bp\to\BSch$ is not a full embedding and the global section functor $\Gamma:\BSch\to\bp$ is not a retract to $\Spec$. More precisely, if $X=\Spec B$ and $\Gamma B=\Gamma(X, \cO_X)$, then the canonical morphism $B\to \Gamma B$ does not need to be an isomorphism, see Example \ref{ex: a blueprint that is not global} below. 

The blueprint $\Gamma B$ is called the \emph{globalization of $B$}\index{Globalization}, and $B$ is \emph{global}\index{Blueprint!global} if $B\to\Gamma B$ is an isomorphism. The following theorem shows that the possible discrepancy between a blueprint and its globalization does not occur on a geometric level.

\begin{thm}[{\cite[Thm.\ 3.12]{blueprints1}}]\label{thm: globalization induces an isomorphism of spectra}
 The canonical morphism $\Spec \Gamma B\to\Spec B$ is an isomorphism of blue schemes. Consequently, the globalization of a blueprint is global.
\end{thm}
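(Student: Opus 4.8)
The plan is to exploit the local nature of the statement and reduce everything to understanding the structure sheaf of an affine blue scheme on the distinguished basis of principal opens. First I would recall that the underlying topological space of $\Spec B$ is, by the definition in Section 3.1 of \cite{blueprints1}, determined purely by the prime ideals of $B$, and that the prime ideals of $B$ are in inclusion-preserving bijection with the prime ideals of $\Gamma B$ — this should already be implicit in the construction, since $\Gamma B$ is obtained as a sheafification that does not alter the topological space. So the topological spaces of $\Spec \Gamma B$ and $\Spec B$ agree, and the canonical map $B\to\Gamma B$ induces the identity on underlying spaces; it remains to check that the induced map of structure sheaves is an isomorphism.

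For the sheaf-level claim I would argue on the distinguished basis $\{D(f)\}_{f\in B}$ of $\Spec B$. The key point is that $\cO_{\Spec B}$ is \emph{defined} as the sheafification of the presheaf $f\mapsto B_f$ of localizations, so by construction $\cO_{\Spec B}(D(f))$ already coincides with the corresponding section of the sheafification. Since $\Gamma B = \cO_{\Spec B}(\Spec B)$, the localizations of $\Gamma B$ and the sheaf $\cO_{\Spec B}$ restricted to any $D(f)$ should be compared: one shows $(\Gamma B)_f \to \cO_{\Spec B}(D(f))$ is an isomorphism. The crucial input here is that sheafification does not change sections over a basis when the presheaf already satisfies the separation axiom locally, together with the compatibility of localization with the global-section functor. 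Thus $\Spec \Gamma B$ and $\Spec B$ carry the same sheaf on the common basis $\{D(f)\}$, and since a sheaf is determined by its values on a basis, the two locally blueprinted spaces are isomorphic; the morphism realizing this is the one induced by $B\to\Gamma B$, and one checks it is a \emph{local} morphism because it is an isomorphism on each stalk.

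For the second sentence — that $\Gamma B$ is global — I would apply what was just proved with $\Gamma B$ in place of $B$: since $\Spec \Gamma B \to \Spec B$ is an isomorphism, taking global sections gives $\Gamma(\Gamma B) = \Gamma(\Spec \Gamma B, \cO) \cong \Gamma(\Spec B, \cO) = \Gamma B$, and one verifies that under this identification the canonical map $\Gamma B \to \Gamma(\Gamma B)$ is precisely this isomorphism; hence $\Gamma B$ is global by definition.

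The main obstacle I expect is the careful verification that $(\Gamma B)_f \cong \cO_{\Spec B}(D(f))$ — i.e.\ that passing to the globalization and then localizing at $f$ agrees with the sheaf value on $D(f)$. This is exactly the point where the failure of the localization presheaf to be a sheaf is felt, and it requires a genuine sheafification argument rather than a formal manipulation: one must show that the sheafification, restricted to the principal-open basis, is computed by the ``correct'' localization of the global sections, which amounts to checking that $\Spec$ does not distinguish a blueprint from its globalization at the level of the standard basis. Everything else — the topological identification, the stalk computation, the bootstrapping in the last paragraph — should be routine once this is in hand. (The detailed argument is carried out in \cite[Thm.\ 3.12]{blueprints1}.)
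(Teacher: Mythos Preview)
The paper does not give a proof of this theorem: it is stated with a bare citation to \cite[Thm.\ 3.12]{blueprints1}, and no argument appears in the present text. Your proposal itself ends by deferring to the same reference, so at the level of what the paper actually contains, there is nothing to compare against beyond noting that both you and the paper point to \cite{blueprints1}.

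On the substance of your sketch: the overall shape is reasonable, but one step is circular as written. You assert that the prime ideals of $B$ and of $\Gamma B$ are in bijection ``since $\Gamma B$ is obtained as a sheafification that does not alter the topological space.'' But the topological space of $\Spec\Gamma B$ is \emph{by definition} the set of prime ideals of $\Gamma B$, not of $B$; saying the sheafification does not alter the space is precisely the content to be proved, not a justification for it. One genuinely has to show that $B\to\Gamma B$ induces a bijection on prime ideals. You also need to address whether the principal opens $D(g)$ for $g\in\Gamma B$ are all accounted for by your basis $\{D(f)\}_{f\in B}$, since $B\to\Gamma B$ need not be surjective (cf.\ Example~\ref{ex: a blueprint that is not global}, where $\Gamma B$ is strictly larger than $B$). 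You correctly flag $(\Gamma B)_f\cong\cO_{\Spec B}(D(f))$ as the crux, and your derivation of the second sentence from the first is fine; but the two points above are where the actual work lies, and your proposal, like the paper, ultimately outsources them to \cite{blueprints1}.
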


\begin{ex}[A blueprint that is not global]\label{ex: a blueprint that is not global}
 Let $k_1$ and $k_2$ be two fields and consider the following blueprint $B=\bpquot A\cR$. The underlying multiplicative monoid is $A=k_1^\bullet\times k_2^\bullet$. The pre-addition $\cR$ is generated by the relations $\sum(a_i,0)\=\sum(b_j,0)$ if $\sum a_i\=\sum b_j$ in $k_1$ and $\sum(0,a_i)\=\sum(0,b_j)$ if $\sum a_i\=\sum b_j$ in $k_2$. Then $\cR$ is not an addition, e.g.\ the sum $(1,1)+(1,1)$ is not defined. The prime ideals of $B$ are $\fp_1=k_1\times\{0\}$ and $\fp_2=\{0\}\times k_2$, thus $\Spec B$ is a discrete space with two points $\fp_1$ and $\fp_2$. The stalk at $\fp_1$ is $k_1$ and the stalk at $\fp_2$ is $k_2$. Therefore the blueprint of global section is the product of $k_1$ and $k_2$, which is a ring and thus does not equal $B$. For an example of a different vein, cf.\ \cite[Ex.\ 3.8]{blueprints1}.
\end{ex}
%\subsection{Fibre products of blue schemes}

Theorem \ref{thm: globalization induces an isomorphism of spectra} allows us to extend some basic properties of usual schemes to blue schemes.
\begin{enumerate}
 \item The affine open subschemes of a blue scheme form a basis for its topology (cf.\ \cite[Cor.\ 3.22]{blueprints1}).
 \item Morphisms of blue schemes are {locally algebraic} (cf.\ \cite[Thm.\ 3.23]{blueprints1}). 
 \item Fibre products of blue schemes exist in $\BSch$ (cf.\ \cite[Prop.\ 3.27]{blueprints1}).
\end{enumerate}
In fact, the fibre products of blue schemes are of a much simpler nature than fiber products of Grothendieck schemes (in the category of Grothendieck schemes). This has the important effect that the fibre product in $\BSch$ coincides with the fibre product in $\bpspaces$, which is absolutely not true for Grothendieck schemes and locally ringed spaces. More precisely, the following is true.

\begin{thm}\label{thm: fibre products}
 The category $\bpspaces$ has fibre products. The fibre product $X\times _S Y$ is naturally a subset of the topological product $X\toptimes Y$, and it carries the subspace topology. In the case of $S=\Spec\Funn$, it has the explicit description
 \[
  X\times_\Funn Y \qquad = \qquad \left\{ \ (x,y)\in X\toptimes Y \ \left| \ \substack{\text{there are a semifield }k\text{ and blueprint}\\ \text{morphisms } \kappa(x)\to k\text{ and }\kappa(y)\to k} \ \right.\right\}.
 \]
 If $X$, $Y$ and $S$ are blue schemes, then the fibre product $X\times_S Y$ in $\bpspaces$ coincides with the fibre product in $\BSch$. In particular, $X\times_S Y$ is a blue scheme.
\end{thm}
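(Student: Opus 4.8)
The plan is to build $X\times_S Y$ by hand as a subspace of the topological product $X\toptimes Y$ carrying a ``tensor'' structure sheaf, to check its universal property in $\bpspaces$ directly, and then to deduce the remaining assertions by comparison with the affine spectra $\Spec(A\otimes_B C)$. Given $f\colon X\to S$ and $g\colon Y\to S$ in $\bpspaces$, let $Z$ be the set of those $(x,y)\in X\toptimes Y$ with $f(x)=g(y)=:s$ for which the coproduct of blue fields $\kappa(x)\otimes_{\kappa(s)}\kappa(y)$ in $\bp$ is nontrivial (that is, $0\neq1$ in it), endowed with the subspace topology; let $p\colon Z\to X$, $q\colon Z\to Y$ be the restrictions of the two projections, and let $\cO_Z$ be the sheafification of the presheaf sending $(U\toptimes W)\cap Z$ to $\cO_X(U)\otimes_{\cO_S(V)}\cO_Y(W)$, where $V$ runs through the open subsets of $S$ with $f(U)\cup g(W)\subseteq V$. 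Two elementary facts underlie everything else: first, the spectrum of a coproduct $A\otimes_B C$ in $\bp$ injects into $\Spec A\toptimes\Spec C$ (this reduces to the corresponding statement for coproducts of monoids with zero, since the underlying monoid of $A\otimes_B C$ is a quotient of the pushout of underlying monoids, and prime ideals of the latter are determined by pairs of primes of the factors); second, a pair $(x,y)$ lying over a common $s\in\Spec B$ is hit precisely when $\kappa(x)\otimes_{\kappa(s)}\kappa(y)$ is nontrivial, with the image of $A\otimes_B C$ in a suitable blue field (or semifield) as the witness.

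\emph{The crucial step} is to show that $Z$ is a \emph{locally} blueprinted space. The stalk of $\cO_Z$ at $(x,y)$ is $\cO_{X,x}\otimes_{\cO_{S,s}}\cO_{Y,y}$, and although a coproduct of local blueprints along local morphisms need not be local in general, the condition defining $Z$ says exactly that $(\fm_x,\fm_y)$ lies in $\Spec(\cO_{X,x}\otimes_{\cO_{S,s}}\cO_{Y,y})$; since this spectrum sits inside $\Spec\cO_{X,x}\toptimes\Spec\cO_{Y,y}$ and since every prime of $\cO_{X,x}$, resp.\ $\cO_{Y,y}$, specializes to $\fm_x$, resp.\ $\fm_y$, the pair $(\fm_x,\fm_y)$ is its unique closed point, whence the stalk is local with maximal ideal the corresponding prime. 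One then checks that $p$ and $q$ are local morphisms and verifies the universal property: given local morphisms $\alpha\colon T\to X$, $\beta\colon T\to Y$ with $f\alpha=g\beta$, the map $t\mapsto(\alpha(t),\beta(t))$ takes values in $Z$---the induced $\kappa(\alpha(t))\to\kappa(t)\leftarrow\kappa(\beta(t))$ witness the nontriviality since $\kappa(t)$ is a blue field---it is continuous as a corestriction of $(\alpha,\beta)$, and it underlies a unique morphism $T\to Z$ over $X$ and $Y$ whose sheaf part is built from $\alpha^\#$ and $\beta^\#$ via the universal property of the coproduct, locality on stalks following as above. This proves that $\bpspaces$ has fibre products and that $X\times_S Y$ is the subspace of $X\toptimes Y$ just described.

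For $S=\Spec\Funn$---a one-point space with residue field $\Funn$---the defining condition becomes nontriviality of $\kappa(x)\otimes_\Funn\kappa(y)$, and quotienting the nontrivial semiring $(\kappa(x)\otimes_\Funn\kappa(y))^+$ by a maximal ideal produces a semifield $k$ with morphisms $\kappa(x)\to k$ and $\kappa(y)\to k$, while conversely any such $k$ forces nontriviality; this is the displayed description. If finally $X$, $Y$, $S$ are blue schemes, then over affine opens $\Spec A\to\Spec B\leftarrow\Spec C$ one compares $Z$ with $\Spec(A\otimes_B C)$: the two point sets agree by the facts of the first paragraph, and the canonical identifications $(A\otimes_B C)_{f\otimes g}\cong A_f\otimes_B C_g$ match both the topologies and, on standard opens and hence after sheafification, the structure sheaves. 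Thus $Z$ is locally isomorphic to spectra of blueprints, so it is a blue scheme; and since a fibre product in $\bpspaces$ that lies in the full subcategory $\BSch$ is a fortiori the fibre product there, the two constructions coincide.

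The main obstacle is the locality of the stalk $\cO_{X,x}\otimes_{\cO_{S,s}}\cO_{Y,y}$: this is precisely the point at which blueprints are better behaved than rings---for local rings the analogous assertion fails, which is the reason fibre products of locally ringed spaces are not computed on underlying spaces---so the argument must be carried by the combinatorial smallness of blueprint coproducts (their primes being determined by pairs of primes of the factors) rather than by any commutative-algebra input. A lesser technical nuisance is to verify that sheafifying the tensor presheaf does not disturb these stalks and that the comparison with $\Spec(A\otimes_B C)$ is an isomorphism of sheaves, not merely a bijection of points with agreement on standard opens.
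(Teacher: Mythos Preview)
Your proposal is correct and follows the natural line of argument; the paper itself gives no proof here but simply defers to \cite[Prop.~1.35 and Thm.~1.38]{blueprints2}, whose arguments are of exactly the shape you describe. Your identification of the key point---that the stalk $\cO_{X,x}\otimes_{\cO_{S,s}}\cO_{Y,y}$ is local precisely because every element of the blueprint tensor product is a pure tensor $a\otimes c$, so primes are determined by their pullbacks and hence by pairs $(\fp,\fq)$ dominated by $(\fm_x,\fm_y)$---is right, and this is indeed where blueprints diverge from rings.

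One small remark on the $\Funn$ step: your passage from ``$\kappa(x)\otimes_\Funn\kappa(y)$ nontrivial'' to ``there is a semifield $k$'' via quotienting $(\kappa(x)\otimes_\Funn\kappa(y))^+$ by a maximal ideal is correct but deserves a sentence of justification, since ideals and quotients of semirings in the blueprint sense are subtractive and one must check that the quotient remains a semiring (it does, because sums already exist in the source) and is a blue field (it is, since the ideal is maximal); hence it is a semifield. The other ``lesser technical nuisances'' you flag---stalks unchanged by sheafification, and matching the structure sheaves via $(A\otimes_B C)_{a\otimes c}\cong A_a\otimes_B C_c$ on the basis of principal opens---are routine.
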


\begin{proof}
 This theorem follows by the same arguments used in \cite{blueprints2} to prove Proposition 1.35 and Theorem 1.38.
\end{proof}

\subsection{Projective space}
\label{subsection: projective geometry}
\label{subsection: projective space}

The generalization of the $\Proj$-construction from scheme theory to blue schemes has been published in the joint paper \cite{LL12} with Javier L\'opez Pe\~na. 

A subset $M$ of a blueprint $B$ is \emph{additively closed}\index{Additively closed subset} if for all $b\=\sum a_i$ with $a_i\in M$ also $b\in M$. In particular, an additively closed subset contains $0$. A \emph{graded blueprint}\index{Blueprint!graded} is a blueprint $B$ together with additively closed subsets $B_i$ for $i\in\N$ such that $1\in B_0$, such that for all $i,j\in\N$ and $a\in B_i$, $b\in B_j$, the product $ab$ is an element of $B_{i+j}$ and such that for every $b\in B$, there are a unique finite subset $I$ of $\N$ and unique non-zero elements $a_i\in B_i$ for every $i\in I$ such that $b\=\sum a_i$. An element of $B_\hom=\bigcup_{i\geq 0}B_i$ is called \emph{homogeneous}\index{Homogeneous element}. There is a straight forward generalization of the functor $\Proj$ from graded rings to graded blueprints. For details, see \cite[Section 2]{LL12}.

The functor $\Proj$ allows us the definition of the \emph{projective space $\P^n_B$}\index{Projective space} over a blueprint $B$. Namely, the free blueprint $C=B[T_0,\dotsc,T_n]$ over $B$ comes together with a natural grading $C_\hom=\bigcup_{i\in\N}C_i$ where the $C_i$ consist of all monomials $bT_0^{e_0}\dotsb T_n^{e_n}$ such that $e_0+\dotsb+e_n=i$ where $b\in B$. Note that $C_0=B$ and $C_\hom=C$. The projective space $\P^n_B$ is defined as $\Proj B[T_0,\dotsc,T_n]$. It comes together with a structure morphism $\P^n_B\to\Spec B$.

In case of $B=\Fun$, the projective space $\P^n_\Fun$ is the monoidal scheme that is known from $\Fun$-geometry (cf.\ \cite{Deitmar08}, \cite[Section 3.1.4]{CLS12}) and Example \ref{ex: projective line and plane} below). The topological space of $\P^n_\Fun$ is finite. Its points correspond to the homogeneous prime ideals $\fp_I=(S_i)_{i\in I}$ of $\Fun[S_0,\dotsc,S_n]$ where $I$ ranges through all proper subsets of $\{0,\dotsc,n\}$.

In case of a ring $B$, the projective space $\P^n_B$ does not coincide with the usual projective space since the free blueprint $B[S_0,\dotsc,S_n]$ is not a ring, but merely the blueprint of all monomials of the form $bS_0^{e_0}\dotsb S_n^{e_n}$ with $b\in B$. However, the associated scheme $\SP^n_B=(\P^n_B)^+$ coincides with the usual projective space over $B$, which equals $\Proj B[S_0,\dotsc,S_n]^+$.

\begin{ex}\label{ex: projective line and plane}
 The \emph{projective line $\P^1_{\Fun}=\A^1_{\Fun}\coprod_{\G_{m,\Fun}}\A^1_{\Fun}$ over $\Fun$}\index{Projective line over $\Fun$} has two closed points $[0:1]$ and $[1:0]$ and one generic point $[1:1]$. The points of the \emph{projective plane $\P^2_\Fun$ over $\Fun$}\index{Projective plane over $\Fun$} correspond to all combinations $[x_0:x_1:x_2]$ with $x_i=0$ or $1$ with exception of $x_0=x_1=x_2=0$. The projective line $\P^1_\Fun$ and the projective plane $\P^2_\Fun$ are illustrated in Figure \ref{figure: p1,p2}.
\begin{figure}[h]
 \begin{center}
  \includegraphics{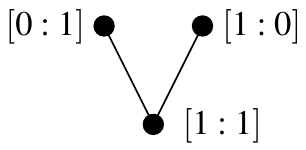} \hspace{2cm} \includegraphics{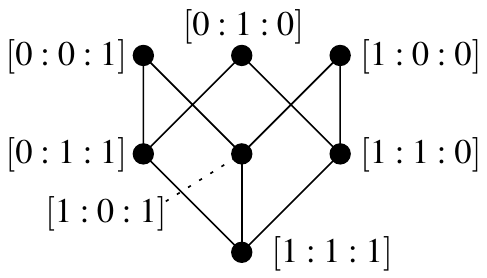}
  \caption{The projective line and the projective plane over $\Fun$}
  \label{figure: p1,p2}
 \end{center} 
\end{figure}
\end{ex}

%%%%%%%%%%%%%%%%%%%%%%%%%%%%%%%%%%%%%%%%%%%%

\subsection{Closed subschemes}
\label{subsection: closed subschemes}

A blue scheme $X$ is \emph{of finite type}\index{Blue scheme!of finite type} if $X$ is compact and if for all open subsets $U$ of $X$, $\cO_X(U)$ is finitely generated as a monoid. It follows that $X$ has finitely many points.

Let $\cX$ be a scheme of finite type. By an \emph{$\Fun$-model of $\cX$}\index{F1-model of a scheme@$\Fun$-model of a scheme} we mean a blue scheme $X$ of finite type such that $X_\Z^+$ is isomorphic to $\cX$. Since a finitely generated $\Z$-algebra is, by definition, generated by a finitely generated multiplicative subset as a $\Z$-module, every scheme of finite type has an $\Fun$-model. It is, on the contrary, true that a scheme of finite type possesses a large number of $\Fun$-models.

Given a scheme $\cX$ with an $\Fun$-model $X$, we can associate to every closed subscheme $\cY$ of $\cX$ the following closed subscheme $Y$ of $X$, which is an $\Fun$-model of $\cY$. In case that $X=\Spec B$ is the spectrum of a blueprint $B=\bpquot A\cR$, and thus $\cX\simeq\Spec B_\Z^+$ is an affine scheme, we can define $Y$ as $\Spec C$ for $C=\bpquot A{\cR(Y)}$ where $\cR(Y)$ is the pre-addition that contains $\sum a_i\=\sum b_j$ whenever $\sum a_i=\sum b_j$ holds in the coordinate ring $\Gamma\cY$ of $\cY$.

Since localizations commute with additive closures, i.e.\ $(S^{-1}B)^+_\Z = S^{-1}(B^+_\Z)$ where $S$ is a multiplicative subset of $B$, the above process is compatible with the restriction to affine opens $U\subset X$. This means that given $U=\Spec(S^{-1}B)$, which is an $\Fun$-model for $\cX'=U_\Z^+$, then the $\Fun$--model $Y'$ that is associated with the closed subscheme $\cY' = \cX'\times_\cX\cY$ of $\cX'$ by the above process is the spectrum of the blueprint $S^{-1}C$. Consequently, we can associate with every closed subscheme $\cY$ of a scheme $\cX$ with an $\Fun$-model $X$ a closed subscheme $Y$ of $X$, which is an $\Fun$--model of $\cY$; namely, we apply the above process to all affine open subschemes of $\cX$ and glue them together, which is possible since additive closures commute with localizations.

In case of a projective variety, i.e.\ a closed subscheme $\cY$ of a projective space $\SP^n_\Z$, we derive the following description of the associated $\Fun$-model $Y$ in $\P^n_\Fun$ by homogeneous coordinate rings. Let $C$ be the homogeneous coordinate ring of $\cY$, which is a quotient of $\Z[S_0,\dotsc,S_n]^+$ by a homogeneous ideal $I$. Let $\cR$ be the pre-addition on $\Fun[S_0,\dotsc,S_n]$ that consists of all relations $\sum a_i\=\sum b_j$ such that $\sum a_i=\sum b_j$ in $C$. Then $B=\bpquot{\Fun[S_0,\dotsc,S_n]}{\cR}$ inherits a grading from $\Fun[S_0,\dotsc,S_n]$ and the $\Fun$-model $Y$ of $\cY$ equals $\Proj B$.

%%%%%%%%%%%%%%%%%%%%%%%%%%%%%%%%%%%%%%%%%%%%%%%%%%%%%%%%%%%%%%%%%%%%%%%%%%%%%%%%%%%%%%%%%%%%%%%%%%%%%%%%%%%%%%%%%%%%%%%%%%%%%%%%%%%%%%%%%%%%%%%%%%%%%%%%%%%%%%%%%%%%%%%%%%%%%%%%
%%%%%%%%%%%%%%%%%%%%%%%%%%%%%%%%%%%%%%%%%%%%%%%%%%%%%%%%%%%%%%%%%%%%%%%%%%%%%%%%%%%%%%%%%%%%%%%%%%%%%%%%%%%%%%%%%%%%%%%%%%%%%%%%%%%%%%%%%%%%%%%%%%%%%%%%%%%%%%%%%%%%%%%%%%%%%%%%

\section{Chevalley groups over $\Fun$}
\label{section: chevalley groups over fun}

A \emph{Chevalley group}\index{Chevalley group} is a split reductive group scheme, which is sometimes assumed to be simple or semisimple, depending on the literature. We refer to SGA3 (\cite{SGA3I}, \cite{SGA3II} and \cite{SGA3III}) and Conrad's lecture notes \cite{Conrad11} for details on algebraic groups and reductive group schemes. In the theory of blueprints, it is possible to define an $\Fun$-model $G$ of a Chevalley group $\cG$, i.e.\ a blue scheme $G$ of finite type over $\Fun$ whose base extension to Grothendieck schemes is isomorphic to $\cG$. However, since the group law $\mu_\cG$ of $\cG$ involves addition (unless $\cG$ is a split torus, cf.\ Example \ref{ex: algebraic torus}), it is in general not possible to descend $\mu_\cG:\cG\times\cG\to\cG$ to a locally algebraic morphism $\mu: G\times G\to G$.

The idea is to replace locally algebraic morphisms by a different notion of a morphism of blue schemes, which allows us to descend group laws and group actions. In honour of Jacques Tits who initially posted the problem of defining Chevalley groups over $\Fun$, we call these new morphisms \emph{Tits morphisms}.

A central role in the definition of Tits morphisms is played by the rank space of a blue scheme, which can be thought of as its set of $\Fun$-rational points together with a residue field for each point. This reflects the ad hoc notions of \emph{$\Fun$-rational points}\index{F1-rational points@$\Fun$-rational points} for Chevalley groups (see the beginning of Part \ref{partI}), for projective spaces and other schemes with counting polynomials (see Section \ref{section: euler cahracteristics and f1-rational points}) and for quiver Grassmannians (see Section \ref{subsection: naive set of f1-rational points}). 

Though the \emph{Tits category} $\Sch_\cT$ behaves differently from the category $\Sch_\Fun$ of blue schemes, they have some properties in common. Tits morphisms and locally algebraic morphisms agree for semiring schemes, i.e.\ $\Sch_\N^+$ is a full subcategory of both $\Sch_\Fun$ and $\Sch_\cT$. Similar to $\Sch_\Fun$, the Tits category comes together with a base extension functor $(\blanc)^+:\Sch_\cT\to\Sch_\N^+$ to semiring schemes. The other extreme of blue schemes for which Tits morphisms and locally algebraic morphisms agree are rank spaces.

One important ingredient that is new to the Tits category is the Weyl extension $\cW:\Sch_\cT\to \Sets$, which is a functor that associates with a blue scheme the underlying set of its rank space. This functor can be thought of as the scheme-theoretic substitute for the ad hoc definitions of sets of $\Fun$-rational points. In particular, it is possible to resolve the dilemma with the formula $G(\Fun)=W$ (see the beginning of Part \ref{partI}) if we substitute $G(\Fun)=\Hom(\Spec\Fun,G)$ by $\cW(G)$.

The idea of an algebraic group over $\Fun$ is made precise in the definition of a \emph{Tits-Weyl model}. Roughly speaking, a Tits-Weyl model of an affine smooth group scheme $\cG$ of finite type is a blue scheme $G$ together with a multiplication $\mu:G\times G\to G$ in $\Sch_\cT$ such that $G^+_\Z\simeq \cG$ (as a group scheme) and such that $\cW(G)$ is canonically isomorphic to the Weyl group $W$ of $\cG$ (w.r.t.\ some fixed maximal torus in $\cG$).

In the following, we will explain the concepts mentioned above and show that it is indeed applicable to Chevalley groups and many other types of algebraic groups.

\begin{ex}[Algebraic torus over $\Fun$]\label{ex: algebraic torus}
 Split tori\index{Torus} are the only connected group schemes that fit easily into any concept of $\Fun$-geometry. We examine the case $r=1$ from the perspective of blue schemes; the case of higher rank $r$ can be deduced easily from the following description.

 For a blueprint $B$, we define the \emph{multiplicative group scheme $\G_{m,B}$}\index{Multiplicative group scheme} as the spectrum of $B[T^{\pm1}]$, which is defined as the following blueprint $\bpquot A\cR$. The monoid $A$ is defined as $\{0\}\cup\{aT^i|a\in B-\{0\}, i\in \Z\}$ with the multiplication $(aT^i)(bT^j)=abT^{i+j}$ if $ab\neq0$ and $(aT^i)(bT^j)=0$ if $ab=0$. We can consider $B$ as a subset of $A$ by identifying $a\in B$ with $aT^0\in A$ (resp.\ with zero if $a=0$). Then $\cR$ is the pre-addition on $A$ that is generated by the pre-addition of $B$, considered as a set of additive relations between elements of $A$. 
 
 The multiplication $\mu:\G_{m,B}\times \G_{m,B}\to\G_{m,B}$ is given by the morphism
 \[
  \begin{array}{cccc}
   \Gamma \mu: & B[T^{\pm1}] & \longrightarrow & B[T^{\pm1}] \otimes_B B[T^{\pm1}]. \\
               & a T^i       & \longmapsto     & a T^i\otimes T^i
  \end{array}
 \]
 It is easily verified that $\G_{m,B}$ together with $\mu$ is indeed a commutative group in $\Sch_B$.

\end{ex}

\subsection{The rank space}
\label{subsection: the rank space}

A blueprint $B$ is \emph{cancellative}\index{Blueprint!cancellative} if $\sum a_i +c\=\sum b_j+c$ implies $\sum a_i\=\sum b_j$. Note that a blueprint $B$ is cancellative if and only if the canonical morphism $B\to B^+_\Z$ is injective. Recall that $\Fun$ is the monoid $\{0,1\}$ and that $\Funsq$ is the blueprint $\bpgenquot{\{0,\pm1\}}{1+(-1)\=0}$.

Let $X$ be a blue scheme and $x\in X$ a point. As in usual scheme theory, every closed subset $Z$ of $X$ comes with a natural structure of a \emph{(reduced) closed subscheme of $X$}\index{Closed subscheme}, cf.\ \cite[Section 1.4]{blueprints2}. The \emph{rank $\rk\, x$ of $x$}\index{Rank!of a point} is the dimension of the $\Q$-scheme $\barx^+_\Q$ where $\barx$ denotes the closure of $x$ in $X$ together with its natural structure as a closed subscheme. Define
\[
 r \quad = \quad \min\, \{\ \rk\, x \ | \ x\in X \ \}.
\]
For the sake of simplicity, we will make the following general hypothesis on $X$. 
\begin{enumerate}
 \item[(H)]\label{hypothesis} The blue scheme $X$ is connected and cancellative. For all $x\in X$ with $\rk\, x=r$, the closed subscheme $\barx$ of $X$ is isomorphic to either $\G_{m,\Fun}^{r}$ or $\G_{m,\Funsq}^{r}$.
\end{enumerate}
This hypothesis allows us to surpass certain technical aspects in the definition of the rank space. 

Assume that $X$ satisfies (H). Then the number $r$ is denoted by $\rk\,X$ and is called the \emph{rank of $X$}\index{Rank!of a blue scheme}. The \emph{rank space of $X$}\index{Rank space} is the blue scheme
\[
 X^\rk\quad = \quad \coprod_{\rk\, x=r}\ \barx,
\]
and it comes together with a closed immersion $\rho_X:X^\rk\to X$. 

Note that Hypothesis (H) implies that $X^\rk$ is the disjoint union of tori $\G_{m,\Fun}^r$ and $\G_{m,\Funsq}^r$. Since the underlying set of both $\G_{m,\Fun}^r$ and $\G_{m,\Funsq}^r$ is the one-point set, the underlying set of $X^\rk$ is $\cW(X)=\{x\in X|\rk\,x=r\}$. Note further that $X^{\rk,+}_\Z\simeq\coprod_{\rk\,x=r} \SG_{m,\Z}^r$. For any blue scheme $Y$, we denote by $\beta_Y:Y^+_\Z\to Y$ the base extension morphism. This means that we obtain a commutative diagram
\[
 \xymatrix@C=4pc{   X^{\rk,+}_\Z \ar[r]^{\rho^+_{X,\Z}}\ar[d]^{\beta_{X^\rk}}   &  X^{+}_\Z \ar[d]^{\beta_{X}} \\
                    X^\rk \ar[r]^{\rho_X}                                    &  X\ .\hspace{-5pt} } 
\]
It is easy to see that the above definitions coincide with the ones given in \cite{blueprints2} if we assume Hypothesis (H). 

\subsection{The Tits category and the Weyl extension}
\label{subsection: The Tits category and the Weyl extension}

A \emph{Tits morphism $\varphi:X\to Y$}\index{Tits morphism} between two blue schemes $X$ and $Y$ is a pair $\varphi=(\varphi^\rk,\varphi^+)$ of a locally algebraic morphism $\varphi^\rk: X^\rk\to Y^\rk$ and a locally algebraic morphism $\varphi^+: X^+\to Y^+$ such that the diagram
\[
 \xymatrix@C=6pc{X^{\rk,+}_\Z  \ar[r]^{\varphi^{\rk,+}_\Z} \ar[d]_{\rho_{X,\Z}^+}      & Y^{\rk,+}_\Z  \ar[d]^{\rho_{Y,\Z}^+} \\ 
           X^+_\Z  \ar[r]^{\varphi^+_\Z}                                      & Y^+_\Z  }
\]
commutes where $\varphi^+_\Z$ is the base extension of $\varphi^+$ to Grothendieck schemes and $\varphi^{\rk,+}_\Z$ is the base extension of $\varphi^\rk$ to Grothendieck schemes. We denote the category of blue schemes together with Tits morphisms by $\Sch_\cT$ and call it the \emph{Tits category}\index{Tits category}.

The Tits category comes together with two important functors. The \emph{Weyl extension $\cW:\Sch_\cT\to\Sets$}\index{Weyl extension} sends a blue scheme $X$ to the underlying set $\cW(X)=\{x\in X|\rk\,x=r\}$ of $X^\rk$ and a Tits morphism $\varphi:X\to Y$ to the underlying map $\cW(\varphi):\cW(X)\to \cW(Y)$ of the morphism $\varphi^\rk:X^\rk\to Y^\rk$. The base extension $(\blanc)^+:\Sch_\cT\to\Sch^+$ sends a blue scheme $X$ to its universal semiring scheme $X^+$ and a Tits morphism $\varphi:X\to Y$ to $\varphi^+:X^+\to Y^+$. We obtain the following diagram of ``base extension functors''
\[
 \xymatrix{\Sets & & \Sch^+_\Z & & \Sch^+_R \\
                 & &         & \Sch^+_\N \ar@{-}[ul]^{(\blanc)_\Z^+}\ar@{-}[ur]_{(\blanc)_R^+} \\
                 & & \TSch \ar@{-}[uull]^{\cW}\ar@{-}[ur]_{(\blanc)^+} }
\]
from the Tits category $\TSch$ to the category $\Sets$ of sets, to the category $\Sch^+_\Z$ of Grothendieck schemes and to the category $\Sch^+_R$ of semiring schemes over any semiring $R$.

\begin{thm}[{\cite[Thm.\ 3.8]{blueprints2}}]
 All functors appearing in the above diagram commute with finite products. Consequently, all functors send (semi)group objects to (semi)group objects.
\end{thm}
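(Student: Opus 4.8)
The plan is to verify that each of the three functors $\cW$, $(\blanc)^+$ (landing in $\Sch^+_\N$), and the further base extensions $(\blanc)^+_\Z$ and $(\blanc)^+_R$ preserves finite products, and then observe that the composites in the diagram automatically do so as well. Since products in every category in the diagram are finite, it suffices to check that the terminal object is preserved and that binary products are preserved; the statement about (semi)group objects is then a purely formal consequence, because a (semi)group object in a category with finite products is precisely an object equipped with a multiplication, unit (and, where relevant, inverse) morphism satisfying diagrammatic axioms, and a product-preserving functor carries such diagrams to diagrams of the same shape.

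First I would treat the base extension $(\blanc)^+:\Sch_\cT\to\Sch^+_\N$. On affine pieces this reduces to the statement that $(\blanc)^+:\bp\to\SRings$ takes coproducts of blueprints to coproducts of semirings, i.e.\ that $(B\otimes_A C)^+\simeq B^+\otimes_{A^+}C^+$; this is a direct consequence of the construction of $B^+=\N[A]/\cR$ as a quotient, since tensor products and quotients commute, and it globalizes by Theorem \ref{thm: globalization induces an isomorphism of spectra} together with the fact (recalled in Section \ref{subsection: globalizations}) that additive closures commute with localizations. The further functors $(\blanc)^+_\Z$ and $(\blanc)^+_R$ from semiring schemes are base changes along $\N\to\Z$ resp.\ $\N\to R$ and hence preserve fibre products over $\Spec\N$, in particular finite products; this is standard. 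Composing, $(\blanc)^+_\Z,(\blanc)^+_R:\Sch_\cT\to\Sch^+_\Z,\Sch^+_R$ preserve finite products.

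Next I would handle the Weyl extension $\cW$. The key point is that for blue schemes satisfying Hypothesis (H), the rank of a point behaves well under products: the closure of a point $(x,y)$ in $X\times_\Fun Y$ is $\barx\times_\Fun\bary$, and passing to $\Q$-schemes via $(\blanc)^+_\Q$ one gets $\dim(\barx\times_\Fun\bary)^+_\Q=\rk x+\rk y$, using that $(\blanc)^+$ preserves products (just proven) and that dimension of a product of $\Q$-schemes is additive. Hence the minimal-rank points of $X\times_\Fun Y$ are exactly the pairs of minimal-rank points, so $\cW(X\times_\Fun Y)=\cW(X)\times\cW(Y)$ as sets; on the underlying tori one checks $\G_{m,\Fun}^{r}\times_\Fun\G_{m,\Fun}^{s}\simeq\G_{m,\Fun}^{r+s}$ (and similarly with $\Funsq$), so $(X\times_\Fun Y)^\rk\simeq X^\rk\times_\Fun Y^\rk$ as blue schemes, which on underlying sets gives the claim and also functoriality in Tits morphisms via $\varphi^\rk$. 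The terminal object $\Spec\Fun$ has rank space $\Spec\Fun$, so $\cW(\Spec\Fun)$ is a point.

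Finally, with all the base-extension functors shown to preserve finite products, and with the compatibility square in the definition of a Tits morphism ensuring that the product of Tits morphisms is again a Tits morphism (so that $\Sch_\cT$ itself has finite products and they are computed as in $\Sch_\Fun$ on the two factors $X^\rk$ and $X^+$), the diagram is one of product-preserving functors, and the (semi)group-object consequence follows formally. The main obstacle I anticipate is the rank computation for products: one must be sure that taking closures, then $(\blanc)^+_\Q$, then dimension, all interact correctly with the fibre product over $\Fun$ — in particular that no unexpected components of lower or higher rank appear in $X\times_\Fun Y$ — and here the explicit description of fibre products in Theorem \ref{thm: fibre products} (the product carries the subspace topology inside $X\toptimes Y$) is exactly what makes the identification of minimal-rank points manageable.
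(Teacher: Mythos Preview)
The paper does not give a proof of this theorem; it is stated as a citation of \cite[Thm.\ 3.8]{blueprints2}, so there is no in-paper argument to compare against. Your outline is a plausible reconstruction of how such a proof would go, and the overall architecture --- reduce to terminal object plus binary products, treat $(\blanc)^+$ via left-adjointness on affines and globalize, treat $\cW$ via additivity of rank on products --- is sound.

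Two points deserve care. First, you assert that the product of two blue schemes satisfying Hypothesis (H) again satisfies (H), and in particular that the closure of a minimal-rank point is a torus of the required shape; this needs the mixed case $\G_{m,\Fun}^{r}\times_\Fun\G_{m,\Funsq}^{s}\simeq\G_{m,\Funsq}^{r+s}$ spelled out, and more importantly you should check connectedness and cancellativity of $X\times_\Fun Y$. In the actual reference the rank space is defined without assuming (H), so this issue is handled differently there; under the simplified (H)-setup of the present exposition your argument is fine provided you add these verifications. Second, your final paragraph is too compressed: the existence of finite products in $\Sch_\cT$ is not automatic from the existence of products in $\Sch_\Fun$. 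You need that the candidate product $X\times_\Fun Y$, equipped with the pair $\bigl((X\times_\Fun Y)^\rk, (X\times_\Fun Y)^+\bigr)$, actually has the universal property in $\Sch_\cT$; for this the identifications $(X\times_\Fun Y)^\rk\simeq X^\rk\times_\Fun Y^\rk$ and $(X\times_\Fun Y)^+\simeq X^+\times_\N Y^+$ must be shown to be compatible with the morphisms $\rho^+_\Z$ in the defining square of a Tits morphism, so that a pair of Tits morphisms into $X$ and $Y$ assembles uniquely into a Tits morphism into $X\times_\Fun Y$. This is the genuine content beyond the separate product-preservation claims, and it should be stated rather than folded into a parenthetical.
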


\subsection{Tits-Weyl models}
\label{subsection: Tits-Weyl models}

A \emph{Tits monoid}\index{Tits monoid} is a (not necessarily commutative) monoid in $\Sch_\cT$, i.e.\ a blue scheme $G$ together with an associative multiplication $\mu:G\times G\to G$ in $\Sch_\cT$ that has an identity $\epsilon:\Spec\Fun\to G$. We often understand the multiplication $\mu$ implicitly and refer to a Tits monoid simply by $G$. The Weyl extension $\cW(G)$ of a Tits monoid $G$ is a unital associative semigroup. The base extension $G^+$ is a (not necessarily commutative) monoid in $\Sch^+_\N$.

Given a Tits monoid $G$ satisfying (H) with multiplication $\mu$ and identity $\epsilon$, then the image of $\epsilon:\Spec\Fun\to G$ consists of a closed point $e$ of $X$. The closed reduced subscheme $\fe=\{e\}$ of $G$ is called the \emph{Weyl kernel of $G$}\index{Weyl kernel}. 

\begin{lemma}[{\cite[Lemma 3.11]{blueprints2}}]
 The multiplication $\mu$ restricts to $\fe$, and with this, $\fe$ is isomorphic to the torus $\G_{m,\Fun}^r$ as a group scheme. 
\end{lemma}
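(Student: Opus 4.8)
The plan is to work inside the rank space. Via the closed immersion $\rho_G\colon G^\rk\to G$, the Weyl kernel $\fe$ is not merely the reduced closure of $e$ in $G$, but the summand $\barx$ of $G^\rk=\coprod_{\rk\,x=r}\barx$ indexed by $x=e$. First I would record that $\rk\,e=r$: the identity $\epsilon\colon\Spec\Fun\to G$ is a Tits morphism, so the Weyl extension sends it to a map $\cW(\Spec\Fun)=\{\ast\}\to\cW(G)$, which is the unit of the semigroup $\cW(G)$ and whose image is exactly $e$; since $\cW(G)=\{x\in G\mid\rk\,x=r\}$ this gives $\rk\,e=r$. Hence Hypothesis~(H) applies to $\fe=\barx$ and yields $\fe\cong\G_{m,\Fun}^r$ or $\fe\cong\G_{m,\Funsq}^r$ as a blue scheme; moreover each summand of $G^\rk$ is a connected component, being a one-point space.

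Next I would restrict $\mu$. Since the Weyl extension and the formation of rank spaces commute with finite products (\cite[Thm.\ 3.8]{blueprints2} and the accompanying description of rank spaces), $G^\rk$ is a monoid in $\BSch$ with multiplication $\mu^\rk\colon G^\rk\times G^\rk\to G^\rk$ and unit $\epsilon^\rk$, while $\cW(G)$ is a unital semigroup with identity $e$. In particular $e\cdot e=e$, so $\mu^\rk$ carries the summand $\fe\times\fe$ of $G^\rk\times G^\rk=\coprod_{x,y\in\cW(G)}(\barx\times\bary)$ into the summand $\fe$ of $G^\rk$. As $\fe\hookrightarrow G^\rk$ is an open and closed immersion and $\fe\times\fe$ is connected (in fact one-point, by Theorem~\ref{thm: fibre products}), this factorization is a morphism $m\colon\fe\times\fe\to\fe$ in $\BSch$, and $\epsilon^\rk$ likewise factors through $\fe$. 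The associativity and unit diagrams for $(\mu^\rk,\epsilon^\rk)$ then restrict to those for $(m,\epsilon^\rk|_\fe)$, because the occurring products $\fe\times\fe$ and $\fe\times\fe\times\fe$ are one-point blue schemes and the factorizations through $\fe$ are unique. Thus $\fe$ is a monoid; this is the meaning of ``$\mu$ restricts to $\fe$''.

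To finish I would pin down $\fe$. The factorization of $\epsilon^\rk$ gives a morphism $\Spec\Fun\to\fe$, hence a blueprint morphism $\Gamma\fe\to\Fun$. But there is no blueprint morphism $\Funsq\to\Fun$: it would have to send the unit $-1$ to the unit $1$, and then the relation $1+(-1)\=0$ of $\Funsq$ would force $1+1\=0$ in $\Fun$, which is false since $\Fun^+=\N$. So $\Gamma\fe$ cannot be $\Funsq[T_1^{\pm1},\dotsc,T_r^{\pm1}]$, and (H) leaves only $\fe\cong\G_{m,\Fun}^r=\Spec\Fun[T_1^{\pm1},\dotsc,T_r^{\pm1}]$. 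It remains to check that $m$ is the standard torus law: any blueprint morphism out of $\Fun[T_1^{\pm1},\dotsc,T_r^{\pm1}]$ must send each $T_i$ to a unit of the target, so $\Gamma m$ is a group homomorphism $\Z^r\to\Z^r\oplus\Z^r$, and the two counit identities relative to $\Gamma\epsilon^\rk\colon T_i\mapsto 1$ force it to be $v\mapsto(v,v)$. Hence $m$ is the comultiplication of $\G_{m,\Fun}^r$, the antipode $T_i\mapsto T_i^{-1}$ is a blueprint morphism, and $\fe\cong\G_{m,\Fun}^r$ as a group scheme.

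Two points need genuine care, as opposed to routine verification. First, that forming the rank space is a product-preserving functor on the Tits category, so that $\mu^\rk$ really endows $G^\rk$, and thus its clopen piece $\fe$, with the structure of a monoid in $\BSch$; this is exactly where Theorem~3.8 of \cite{blueprints2} and the structure of rank spaces under~(H) enter. Second, the observation that the presence of the $\Fun$-rational identity point forbids the $\Funsq$-alternative in~(H) — this is what upgrades ``a split torus over $\Funsq$ or over $\Fun$'' to ``a split torus over $\Fun$''. Once these are settled, recovering the group law from the counit axioms is a standard diagonalizable-group computation.
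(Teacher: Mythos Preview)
The present paper does not prove this lemma; it is quoted from \cite[Lemma~3.11]{blueprints2} without argument. Your reconstruction is correct and follows the natural line: Hypothesis~(H) forces $\fe\cong\G_{m,\Fun}^r$ or $\G_{m,\Funsq}^r$; the monoid law on $\cW(G)$ gives $e\cdot e=e$, so $\mu^\rk$ carries the clopen summand $\fe\times\fe$ of $G^\rk\times G^\rk$ into $\fe$; the factorization of $\epsilon^\rk$ through $\fe$ yields an $\Fun$-point and hence excludes the $\Funsq$-alternative (there being no blueprint morphism $\Funsq\to\Fun$); and the counit identities force the comultiplication on $\Fun[T_1^{\pm1},\dotsc,T_r^{\pm1}]$ to be the diagonal $T_i\mapsto T_i\otimes T_i$.

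One caveat worth making explicit: the identification $(G\times G)^\rk\cong G^\rk\times G^\rk$ that underlies your use of $\mu^\rk$ as a map $G^\rk\times G^\rk\to G^\rk$ is not literally contained in the theorem quoted in this survey, which asserts only that $\cW$ and $(\blanc)^+$ preserve finite products, not $(\blanc)^\rk$ itself. You flag this as the point where \cite{blueprints2} does real work, so there is no gap---just an honest dependence on the cited source, which is exactly what the paper does too.
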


This means that $\fe^+_\Z$ is a split torus $T\simeq\SG_{m,\Z}^r$ of $\cG=G^+_\Z$, which we call the \emph{canonical torus of $\cG$ (w.r.t.\ $G$)}\index{Canonical torus}. 

If $\cG$ is an affine smooth group scheme of finite type, then we obtain a canonical morphism 
\[
 \Psi_\fe : \ G^{\rk,+}_\Z/\fe^+_\Z \quad \longrightarrow \quad W(T)
\]
where $W(T)=\Norm_\cG(T)/\Cent_\cG(T)$ is the \emph{Weyl group of $\cG$ w.r.t.\ $T$}\index{Weyl group} (cf.\ \cite[XIX.6]{SGA3III}). We say that $G$ is a \emph{Tits-Weyl model of $\cG$}\index{Tits-Weyl model} if $T$ is a maximal torus of $\cG$ (cf.\ \cite[XII.1.3]{SGA3II}) and $\Psi_\fe$ is an isomorphism. 

This definition has some immediate consequences. We review some definitions, before we state Theorem \ref{thm: properties of tits-weyl groups}. The \emph{ordinary Weyl group of $\cG$}{\index{Weyl group!ordinary Weyl group} is the underlying group $W$ of $W(T)$ . The \emph{reductive rank of $\cG$}\index{Rank!reductive} is the rank of a maximal torus of $\cG$. For a split reductive group scheme, we denote the \emph{extended Weyl group}}\index{Weyl group!extended Weyl group} or \emph{Tits group} $\Norm_\cG(T)(\Z)$ by $\widetilde W$ (cf.\ \cite{Tits66} or \cite[Section 3.3]{blueprints2}).

For a blueprint $B$, the set $G^\sT(B)$ of Tits morphisms from $\Spec B$ to $G$ inherits the structure of an associative unital semigroup. In case, $G$ has several connected components, we define the \emph{rank of $G$}\index{Rank!of a Tits-Weyl model} as the rank of the connected component of $G$ that contains the image of the unit $\epsilon:\Spec\Fun\to G$. 

\begin{thm}[{\cite[Thm.\ 3.14]{blueprints2}}]\label{thm: properties of tits-weyl groups}
 Let $\cG$ be an affine smooth group scheme of finite type. If $\cG$ has a Tits-Weyl model $G$, then the following properties hold true.
 \begin{enumerate}
  \item\label{part1} The Weyl group $\cW(G)$ is canonically isomorphic to the ordinary Weyl group $W$ of $\cG$.
  \item\label{part2} The rank of $G$ is equal to the reductive rank of $\cG$.
  \item\label{part3} The semigroup $G^\sT(\Fun)$ is canonically a subgroup of $\cW(G)$.
  \item\label{part4} If $\cG$ is a split reductive group scheme, then $G^\sT(\Funsq)$ is canonically isomorphic to the extended Weyl group $\widetilde W$ of $\cG$.
 \end{enumerate}
\end{thm}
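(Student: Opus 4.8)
\textit{Overall strategy.} The plan is to settle the four assertions in turn, distilling in each case what is needed from the single defining property of a Tits-Weyl model — that $\Psi_\fe$ is an isomorphism — together with the lemma above (the Weyl kernel $\fe$ is the torus $\G_{m,\Fun}^r$), the theorem that $\cW$ and $(\blanc)^+$ preserve finite products, and the structural description of rank spaces from \cite{blueprints2}. First I would reduce to a clean picture of the rank space. By Hypothesis (H) and the cited lemma, $G^\rk=\coprod_{x\in\cW(G)}\barx$ with each component isomorphic to $\G_{m,\Fun}^r$ or $\G_{m,\Funsq}^r$ and with $\fe$ the identity component; hence, as in \cite{blueprints2}, $G^{\rk,+}_\Z=\coprod_x\SG_{m,\Z}^r$ is a $\Z$-group scheme whose identity component is $\fe^+_\Z=T$, so that $T$ is normal in it and $G^{\rk,+}_\Z/\fe^+_\Z$ is the constant group scheme on the finite set $\cW(G)$. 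Then $\Psi_\fe\colon G^{\rk,+}_\Z/\fe^+_\Z\xrightarrow{\ \sim\ }W(T)$ exhibits $W(T)$ as that constant group scheme, and passing to $\Z$-points proves (i) — the group laws agreeing because $\Psi_\fe$ is a homomorphism of group schemes by construction — while (ii) is immediate from $\rk\,G=\rk\,e=\dim_\Q\fe^+_\Q=r=\dim T$ together with the fact that $T=\fe^+_\Z$ is, by definition of a Tits-Weyl model, a maximal torus of $\cG$.

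\textit{Part (iii).} Here I would use that $\Spec\Fun$ is the terminal object of $\Sch_\cT$, so $G^\sT(\Fun)$ is the monoid of $\Spec\Fun$-points of the monoid object $G$, and the product-preserving Weyl extension yields a homomorphism of monoids $G^\sT(\Fun)\to\cW(G)$, $\varphi\mapsto\cW(\varphi)$. The essential step is injectivity of this map. A Tits morphism $\varphi=(\varphi^\rk,\varphi^+)$ out of $\Spec\Fun$ is already determined by $\varphi^\rk$: its image lies in some component $\barx$, and since $\G_{m,\Funsq}^r$ has no $\Fun$-point while $\G_{m,\Fun}^r(\Fun)$ is a single point, $\varphi^\rk$ is pinned down by the point $\cW(\varphi)$; the defining square of a Tits morphism then forces $\varphi^+_\Z=\rho^+_{G,\Z}\circ\varphi^{\rk,+}_\Z$, and $\varphi^+$ is in turn forced because $\Hom((\Spec\Fun)^+,G^+)\to\Hom(\Spec\Z,G^+_\Z)$ is injective — on affine charts an $\N$-point of a semiring is recovered from its $\Z$-linearization, as $\N\hookrightarrow\Z$. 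Hence $G^\sT(\Fun)$ is a finite submonoid of the finite group $\cW(G)\simeq W$, and therefore a subgroup.

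\textit{Part (iv).} This is the hardest; I would prove it by running the same pattern with $\Funsq$ in place of $\Fun$. Since $\cG$ is split reductive we have $\Cent_\cG(T)=T$, so $\widetilde W=\Norm_\cG(T)(\Z)$ sits in $1\to T(\Z)\to\widetilde W\to W\to1$. Because $(\Spec\Funsq)^+=\Spec\Z$ and $\Funsq^\times=\{\pm1\}=\Z^\times$, the ring--semiring adjunction gives $\Hom(\Spec\Z,G^+)=\Hom(\Spec\Z,G^+_\Z)$, so a Tits morphism $\Spec\Funsq\to G$ amounts exactly to a locally algebraic morphism $\varphi^\rk\colon\Spec\Funsq\to G^\rk$, i.e.\ to a component $\barx$ together with a point of $\barx(\Funsq)\cong\{\pm1\}^r$. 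I would then send $\varphi$ to $\Theta(\varphi):=\varphi^+_\Z=\rho^+_{G,\Z}\circ\varphi^{\rk,+}_\Z\in\cG(\Z)$. The defining square of the Tits morphism $\mu$ forces $\rho^+_{G,\Z}$ to be a homomorphism of $\Z$-group schemes $G^{\rk,+}_\Z\to\cG$, so the image of each component of $G^{\rk,+}_\Z$ is a subset of the form $gT=Tg$, whence $g$ normalizes $T$ and $\im(\rho^+_{G,\Z})$ is a subgroup of $\cG$ containing and normalizing $T$; by (i) its set of components is $W$ and, via $\Psi_\fe$, it maps onto $\Norm_\cG(T)/T$, so $\im(\rho^+_{G,\Z})=\Norm_\cG(T)$ and in fact $\rho^+_{G,\Z}\colon G^{\rk,+}_\Z\xrightarrow{\ \sim\ }\Norm_\cG(T)$. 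A component-by-component check (a $\Funsq$-point of $\barx$ matches a $\Z$-point of $\SG_{m,\Z}^r$) then shows that $\Theta$ is a bijection, that it restricts to the identity on $T(\Z)=\fe(\Funsq)$, and that it is compatible with $\cW\colon G^\sT(\Funsq)\to\cW(G)=W$ and $\widetilde W\to W$ — so $\Theta$ is the asserted canonical isomorphism $G^\sT(\Funsq)\simeq\widetilde W$.

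\textit{Main obstacle.} The step I expect to be hardest is the bookkeeping in (iii) and (iv) relating the three incarnations of a point — in $G^\rk$, in $G^{\rk,+}_\Z$, and in $G^+$ — and, above all, pinning down exactly which $\Z$-points of $\cG$ are realised by Tits morphisms out of $\Spec\Fun$ resp.\ $\Spec\Funsq$. This is precisely where the bare isomorphism $\Psi_\fe$ does not suffice: one must additionally know that $\rho^+_{G,\Z}$ maps $G^{\rk,+}_\Z$ isomorphically onto $\Norm_\cG(T)$, which I would extract from the $\Z$-group-scheme structure of $G^{\rk,+}_\Z$ and the compatibility square built into the notion of a Tits morphism.
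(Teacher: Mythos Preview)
The paper does not prove this theorem here; it merely quotes it from \cite[Thm.\ 3.14]{blueprints2}, so there is no in-paper argument to compare against. Your outline follows the natural strategy and is essentially what one expects the proof in \cite{blueprints2} to look like: read off (i) and (ii) directly from the isomorphism $\Psi_\fe$ and the identification $\fe^+_\Z=T$, and for (iii)--(iv) analyse Tits morphisms out of $\Spec\Fun$ resp.\ $\Spec\Funsq$ by exploiting that their rank space, $(\blanc)^+$, and $(\blanc)^+_\Z$ all collapse to a single point.

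Two places deserve more care. First, you use freely that $G^{\rk,+}_\Z$ is a \emph{group} scheme (not just a monoid scheme) with $\fe^+_\Z$ normal, and that $\rho^+_{G,\Z}$ is a homomorphism of group schemes. Your derivation of this from the defining square for $\mu$ presupposes that $(\blanc)^\rk$ commutes with finite products, i.e.\ $(G\times G)^\rk\simeq G^\rk\times G^\rk$; the theorem you cite only asserts this for $\cW$ and $(\blanc)^+$. This compatibility is indeed established in \cite{blueprints2}, but it is an input you must invoke, not a consequence of what is stated in the present paper. Moreover, even granting a monoid structure on $G^{\rk,+}_\Z$, you still need an inverse; the cleanest route is to observe that a finite-type flat monoid scheme over $\Z$ whose fibres are groups is a group scheme, or to pull the inverse of $\cG$ back along the closed immersion $\rho^+_{G,\Z}$ once you know the image is $\Norm_\cG(T)$.

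Second, your ``five-lemma'' step showing $\rho^+_{G,\Z}\colon G^{\rk,+}_\Z\xrightarrow{\sim}\Norm_\cG(T)$ is the crux of (iv), as you yourself flag. The argument you sketch (iso on the identity component $T$ and on the component group via $\Psi_\fe$, hence iso) is correct for group schemes that are extensions of a finite constant group by a split torus, because such extensions are fpqc-locally split and $\SG_m$ has no automorphisms as a $\Z$-group beyond inversion; but you should say this rather than leave it as ``in fact''. Once that isomorphism is in hand, your bijection $G^\sT(\Funsq)\simeq\Norm_\cG(T)(\Z)=\widetilde W$ via $\varphi\mapsto\varphi^+_\Z$ goes through, using $(\Spec\Funsq)^+=\Spec\Z$ and the adjunction $\Hom_{\Sch^+_\N}(\Spec\Z,G^+)=\Hom_{\Sch^+_\Z}(\Spec\Z,G^+_\Z)$ exactly as you say.
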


The following theorem is proven in \cite{blueprints2} for a large class of split reductive group schemes $\cG$ and their Levi- and parabolic subgroups. Markus Reineke added an idea, which helped to extend it to all split reductive group schemes.

\begin{thm}\label{thm: chevalley groups over f1}
 \begin{enumerate}
  \item Every split reductive group scheme $\cG$ has a Tits-Weyl model $G$.
  \item Let $T$ be the canonical torus of $\cG$ and $\cM$ a Levi subgroup of $\cG$ containing $T$. Then $\cM$ has a Tits-Weyl model $M$ that comes together with a locally closed embedding $M\to G$ of Tits-monoids that is a Tits morphism.
  \item Let $\cP$ a parabolic subgroup of $\cG$ containing $T$. Then $\cP$ has a Tits-Weyl model $P$ that comes together with a locally closed embedding $P\to G$ of Tits-monoids that is a Tits morphism.
  \item Let $\cU$ be the unipotent radical of a parabolic subgroup $\cP$ of $\GL_{n,\Z}$ that contains the diagonal torus $T$. Then $\cU$, $\cP$ and $\GL_{n,\Z}$ have respective Tits-Weyl models $U$, $P$ and $\GL_{n,\Fun}$, together with locally closed embeddings $U\to P\to \GL_{n,\Fun}$ of Tits-monoids that are Tits morphisms and such that $T$ is the canonical torus of $\cP$ and $\GL_{n,\Z}$.
 \end{enumerate}
\end{thm}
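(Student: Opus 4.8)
The plan is to construct a Tits-Weyl model of a split reductive group scheme $\cG$ from a well-chosen faithful representation, and then to cut out the Levi, parabolic and unipotent pieces from it. Fix a split maximal torus $T\cong\SG_{m,\Z}^r$ of $\cG$, a Borel $\cB\supset T$, and call a closed embedding of group schemes $\iota\colon\cG\hookrightarrow\GL_{n,\Z}$ \emph{adapted} if the diagonal torus $D\subset\GL_{n,\Z}$ pulls back to $T$ and the monomial matrices $\Norm_{\GL_{n}}(D)$ pull back, scheme-theoretically, to $\Norm_\cG(T)$, in such a way that the induced map realizes $W=\Norm_\cG(T)/\Cent_\cG(T)$ as a subgroup of $S_n=\Norm_{\GL_n}(D)/D$. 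Granting an adapted $\iota$, I would define $G$ to be the closed blue subscheme of $\GL_{n,\Fun}=\Spec\Fun[\GL_n]$ obtained by the construction of Section~\ref{subsection: closed subschemes}: its pre-addition is generated by the additive relations valid in the coordinate ring $\Gamma\cG$. By that construction $G^+_\Z\simeq\cG$; and since the comultiplication of $\cG$ is the restriction of that of $\GL_{n,\Z}$, hence sends $T_{ij}$ to $\sum_kT_{ik}\otimes T_{kj}$, and these relations already hold in $\Fun[\GL_n]$, the group law descends to a locally algebraic $\mu^+\colon G^+\times G^+\to G^+$ and, together with its restriction $\mu^\rk$ to rank spaces, to a Tits morphism $\mu=(\mu^\rk,\mu^+)$; associativity and the unit are inherited from $\GL_{n,\Fun}$ (whose Tits-monoid structure is available from~\cite{blueprints2}), so $G$ is a Tits monoid.

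Next I would identify the rank space. The rank-$r$ points of $\GL_{n,\Fun}$ are the $n!$ monomial loci $\fp_\sigma=(T_{ij}:\sigma(j)\neq i)$, each with closure $\cong\G_{m,\Fun}^n$; the big-cell picture $\cG=\bigcup_{w\in W}\dot w\,\cU^-T\cU^+$ shows that $G$ is covered by charts isomorphic to $\A^N_\Fun\times\G_{m,\Fun}^r$, so that $r$ is indeed the minimal rank, and that the $\fp_\sigma$ defining prime ideals of $G$ are exactly those corresponding to $\Norm_\cG(T)$, each meeting $G$ in a torus $\dot w\,\fe\cong\G_{m,\Fun}^r$ where $\fe$ is the Weyl kernel. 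Here $G$ is connected because $\cG$ is, and cancellative because $\Fun[\GL_n]$, hence its quotient, embeds into its ring of definition, so Hypothesis~(H) holds. Consequently $G^\rk=\coprod_{w\in W}\dot w\,\fe$, so $\cW(G)$ is in canonical bijection with $W$, and $\fe^+_\Z=T$ is a maximal torus of $\cG$ by the choice of $\iota$. Finally $\Psi_\fe\colon G^{\rk,+}_\Z/T\to W(T)$ is an isomorphism because on each component $\dot w\,\fe$ it is induced by the inclusion of the corresponding coset of $\Norm_\cG(T)$, and adaptedness says these cosets exhaust $\Norm_\cG(T)$ and are distinct modulo $\Cent_\cG(T)$. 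This proves~(i), and parts~(i)--(iv) of Theorem~\ref{thm: properties of tits-weyl groups} then apply.

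For (ii) and (iii): a Levi subgroup $\cM$ with $T\subset\cM\subset\cG$ is closed, so an adapted embedding of $\cG$ restricts to an adapted embedding of $\cM$ (the monomial matrices in $\cM$ are $\Norm_\cM(T)$, and $W(\cM)$ embeds into $W(\cG)$); running the previous two paragraphs for $\cM$ produces a Tits-Weyl model $M$, and the surjection $\Gamma\cG\twoheadrightarrow\Gamma\cM$ induces a morphism $M\to G$ which on rank spaces is the inclusion $\coprod_{w\in W(\cM)}\dot w\,\fe_M\hookrightarrow\coprod_{w\in W(\cG)}\dot w\,\fe$ and on $(\blanc)^+$ the corresponding closed immersion, compatibly over $\Z$; hence $M\to G$ is a locally closed embedding of Tits monoids. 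A parabolic $\cP\supset T$ has a Levi decomposition $\cP=\cL\ltimes\cU$; since $\cP$, being the stabiliser of a point of a flag variety, is closed and meets the monomial matrices exactly in $\Norm_\cL(T)$, the same recipe gives a Tits-Weyl model $P$ and a locally closed embedding $P\to G$. For (iv) one works directly inside the $\GL_n$-model of~\cite{blueprints2}: the unipotent radical $\cU$ of a parabolic $\cP\supset T$ of $\GL_{n,\Z}$ is, as a scheme, a product of root subgroups $U_\alpha\cong\Ga$ that are coordinate lines in $\GL_{n,\Z}$, so the closed-subscheme construction yields $U\simeq\A^m_\Fun$ with $U^+_\Z\simeq\cU$; $\cU$ has no nontrivial torus, so $r=0$, $\fe=\Spec\Fun$ and $\cW(U)$ is trivial, making $U$ a Tits-Weyl model of $\cU$; the multiplication, being polynomial in the root coordinates, stays valid on coordinate lines over $\Fun$ and descends to a Tits morphism, and the inclusions of coordinate lines and coordinate subspaces give locally closed embeddings $U\to P\to\GL_{n,\Fun}$ of Tits monoids with $T$ still the canonical torus of $\cP$ and $\GL_{n,\Z}$ since the Weyl kernels match.

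The one genuinely hard point, and the reason the statement is this general, is the existence of an adapted faithful representation for an \emph{arbitrary} split reductive $\cG$. For the classical groups this is checked with the standard representation (plus a few auxiliaries) in~\cite{blueprints2}, but for groups admitting no multiplicity-free faithful representation — already for $E_8$, whose smallest faithful representation is the $248$-dimensional adjoint one with an $r$-dimensional zero weight space — the naive diagonal embedding does not put $\Norm_\cG(T)$ among the monomial matrices, and a more careful construction of the representation and the weight basis is needed so that the Weyl group acts by monomial matrices. This is exactly the additional input of Markus Reineke; everything else above is formal once such a representation (equivalently, an adapted embedding) is in hand, using the closed-subscheme construction, the compatibility of $\Spec$ with globalization (Theorem~\ref{thm: globalization induces an isomorphism of spectra}), the stability of the relevant functors under finite products, and the structural properties of Tits-Weyl models recalled before Theorem~\ref{thm: properties of tits-weyl groups}.
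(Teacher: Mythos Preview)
The paper does not give a proof here; it defers to \cite{blueprints2} for a large class of groups and to Reineke's unpublished contribution for the extension to all split reductive groups, noting only that the models are built from matrix coefficients of linear representations. Your strategy matches that indication, and you correctly isolate the one non-formal ingredient (existence of an adapted embedding, with $E_8$ as the cautionary case). But your identification of the rank space has a real gap: the claim that $G$ is ``covered by charts isomorphic to $\A^N_\Fun\times\G_{m,\Fun}^r$'' via translates of the big cell is false at the blue-scheme level. The blue scheme $G$ is cut out of $\GL_{n,\Fun}$ by the construction of Section~\ref{subsection: closed subschemes} and carries the induced pre-addition, not the trivial one; already for $\SL_{2,\Fun}$ the localisation at $T_1$ is $\bpgenquot{\Fun[T_1^{\pm1},T_2,T_3,T_4]}{T_1T_4\=T_2T_3+1}$, which is not isomorphic to $\G_{m,\Fun}\times\A^2_\Fun$ as a blueprint. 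The points of minimal rank and the verification of Hypothesis~(H) must be extracted by direct analysis of the prime ideals of the coordinate blueprint (as the paper does in its $\SL_2$ example), and this is precisely the substantive case-by-case work of \cite{blueprints2} that you have short-circuited.

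A second slip: ``these relations already hold in $\Fun[\GL_n]$'' is misleading, since $T_{ij}\mapsto\sum_kT_{ik}\otimes T_{kj}$ is not a morphism of blueprints on $\Fun[\GL_n]$ (there is no addition there). What makes $\mu^\rk$ a locally algebraic morphism between rank spaces is that on the monomial loci only one summand survives --- monomial times monomial is monomial --- and it is exactly the adaptedness hypothesis that guarantees $G^\rk$ sits inside the monomial loci of $\GL_{n,\Fun}$. That step should be made explicit rather than presented as a mere ``restriction'' of $\mu^+$. Similarly, ``connected because $\cG$ is'' and the inheritance of (H) for Levi, parabolic and unipotent subgroups each require an argument at the level of prime ideals of blueprints; the corresponding statements over $\Z$ do not transfer automatically.
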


\begin{rem}
 All Tits-Weyl models of the above theorem are constructed in terms of matrix coefficients of linear representations of the algebraic group in question. In general, different linear representations (of an appropriate nature) lead to different Tits-Weyl models. It might be possible to classify the Tits-Weyl models of a (reductive) algebraic group in terms of its representation theory. 
\end{rem}

\subsection{Total positivity}
\label{subsection: total positivity}

 Matrix coefficients of particular importance are the generalized minors of Fomin and Zelevinsky (see \cite{Fomin-Zelevinsky99}) and elements of Lustzig's canonical basis (see \cite{Lusztig93}). There is a link to Tits-Weyl models, but this is far from being well-understood. In the example of $\SL_n$, however, we can make the connection to generalized minors precise. 

 Let $I$ and $J$ be subsets of $\{1,\dotsc,n\}$ of the same cardinality $k$. Then we can consider the matrix minors
 \[
  \Delta_{I,J}(A) \quad = \quad \det(a_{i,j})_{i\in I, j\in J}
 \]
 of an $n\times n$-matrix $A=(a_{i,j})_{i,j=1,\dotsc,n}$. Following Fomin and Zelevinsky, we say that a real matrix $A$ of determinant $1$ is \emph{totally non-negative}\index{Totally non-negative matrix} if $\Delta_{I,J}(A)\geq0$ for all matrix minors $\Delta_{I,J}$, cf.\ \cite{Fomin-Zelevinsky99}.

 Let $\Fun[\Delta_{I,J}]$ be the free monoid with zero that is generated by all matrix minors $\Delta_{I,J}$ for varying $k\in\{1,\dotsc, n-1\}$ and subsets $I,J\subset\{1,\dotsc,n\}$ of cardinality $k$. Let $\cR$ be the pre-addition of $\Fun[\Delta_{I,J}]$ that consists of all additive relations $\sum a_i\=\sum b_j$ between generalized minors that hold in the coordinate ring $\Z[\SL_n]$ of $\SL_{n}$ over $\Z$. Define the blueprint
 \[
  \Fun[SL_{n}] \quad = \quad \bpquot{\Fun[\Delta_{I,J}]}{\cR}.
 \]
 and the blue scheme $\SL_{n,\Fun}=\Spec \Fun[SL_{n}]$. The following theorem is due to Javier L\'opez Pe\~na, Markus Reineke and the author.

\begin{thm}
 The blue scheme $\SL_{n,\Fun}$ has the unique structure of a Tits-Weyl model of $\SL_{n,\Z}$. It satisfies that $\SL_{n,\Fun}(\R_{\geq 0})$ is the semigroup of all totally non-negative matrices.
\end{thm}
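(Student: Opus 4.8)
The plan is to prove the first assertion by equipping $\SL_{n,\Fun}$ with an explicit Tits monoid structure $\mu$ and verifying the three conditions in the definition of a Tits-Weyl model — that $\SL_{n,\Fun}{}^+_\Z\simeq\SL_{n,\Z}$ as group schemes, that the Weyl kernel is a maximal torus, and that $\Psi_\fe$ is an isomorphism — then to show $\mu$ is the only such structure, and to prove the second assertion by reading off $\SL_{n,\Fun}(\R_{\geq0})$ directly from the pre-addition $\cR$. First I would check that $\cR$ is a genuine pre-addition. By construction a relation $\sum m_i\=\sum m_j'$ lies in $\cR$ exactly when the corresponding polynomial identity holds in $\Z[\SL_n]$, so axiom (iii) is the injectivity of the monoid homomorphism $\Fun[\Delta_{I,J}]\to\Z[\SL_n]$; since $\Z[x_{ij}]$ is a unique factorization domain in which each minor $\Delta_{I,J}$ with $|I|=|J|\leq n-1$ is irreducible and of strictly smaller degree than $\det$, a brief homogeneity argument shows that two monomials in these minors congruent modulo $(\det-1)$ already coincide as polynomials, hence as monomials. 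The same bookkeeping gives the base extension: the $1\times1$ minors $\Delta_{\{i\},\{j\}}=x_{ij}$ generate $\Z[\SL_n]$, and splitting an element of $\ker(\Z[\Delta_{I,J}]\to\Z[\SL_n])$ (the polynomial ring on the symbols $\Delta_{I,J}$) into its positive and negative parts exhibits it as the difference of the two sides of a relation in $\cR$; hence $(\Fun[SL_n])^+_\Z\simeq\Z[\SL_n]$, that is $\SL_{n,\Fun}{}^+_\Z\simeq\SL_{n,\Z}$ as schemes. In particular $\SL_{n,\Fun}$ is of finite type over $\Fun$, connected (the zero ideal is prime, $\Z[\SL_n]$ being a domain) and cancellative (by the injectivity above).

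Next I would build $\mu=(\mu^\rk,\mu^+)$. For $\mu^+$: the Cauchy--Binet formula $\Delta_{I,J}(AB)=\sum_K\Delta_{I,K}(A)\Delta_{K,J}(B)$ has coefficients in $\{0,1\}$, so $\Delta_{I,J}\mapsto\sum_K\Delta_{I,K}\otimes\Delta_{K,J}$ defines a coassociative, counital comultiplication of semirings $(\Fun[SL_n])^+\to(\Fun[SL_n])^+\Sotimes_\N(\Fun[SL_n])^+$ — well defined because the relations of $\cR$ are carried into relations that hold over $\Z[\SL_n]\otimes\Z[\SL_n]$ and hence over the semiring — giving $\mu^+\colon\SL_{n,\Fun}^+\times\SL_{n,\Fun}^+\to\SL_{n,\Fun}^+$ whose base extension to $\Z$ is the multiplication of $\SL_{n,\Z}$. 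For $\mu^\rk$ one must describe the rank space, which is the crux: the points of $\SL_{n,\Fun}$ of minimal rank should be exactly the primes $\fp_w=(\Delta_{\{i\},\{j\}}\mid j\neq w(i))$ for $w\in S_n$, each of rank $n-1$, and a computation with the Laplace expansion of $\det$ shows that $\Fun[SL_n]/\fp_w$ is isomorphic to $\G_{m,\Fun}^{n-1}$ when $w$ is even and to $\G_{m,\Funsq}^{n-1}$ when $w$ is odd (the sign being forced by $\det=1$). Thus Hypothesis (H) holds, $\rk\SL_{n,\Fun}=n-1$, $\SL_{n,\Fun}^\rk=\coprod_{w\in S_n}\overline{\fp_w}$, $\cW(\SL_{n,\Fun})\cong S_n$, and $(\SL_{n,\Fun}^\rk)^+_\Z$ is exactly the normalizer $\Norm_{\SL_n}(T)$ of the diagonal torus $T$, i.e.\ the subgroup of monomial matrices of determinant $1$. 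One then takes $\mu^\rk$ to be the restriction to $\Norm_{\SL_n}(T)$ of the multiplication of $\SL_{n,\Z}$ (monomial matrices being closed under multiplication); the square required of a Tits morphism commutes because $\Norm_{\SL_n}(T)$ is a subgroup of $\SL_n$, so $(\SL_{n,\Fun},\mu)$ is a Tits monoid with $\SL_{n,\Fun}{}^+_\Z\simeq\SL_{n,\Z}$ as group schemes. The identity section lands in $\fp_{\id}$, hence the Weyl kernel is $\fe=\overline{\fp_{\id}}\simeq\G_{m,\Fun}^{n-1}$ and $\fe^+_\Z=T$ is a maximal torus; since $(\SL_{n,\Fun}^\rk)^+_\Z=\Norm_{\SL_n}(T)$ and $\Cent_{\SL_n}(T)=T$, the map $\Psi_\fe\colon\Norm_{\SL_n}(T)/T\to W(T)=\Norm_{\SL_n}(T)/\Cent_{\SL_n}(T)$ is the identity and hence an isomorphism, so $\SL_{n,\Fun}$ is a Tits-Weyl model of $\SL_{n,\Z}$. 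For uniqueness, any other such $\mu'$ has $\mu'^+=\mu^+$ since $\SL_{n,\Fun}^+$ is cancellative and so embeds into $\SL_{n,\Z}$ with its unique group law; and $\mu'^\rk$ agrees with $\mu^\rk$ on underlying points (both realise the Weyl-group law on $\cW=S_n$) and, being a morphism between the rigid tori $\G_{m,\Fun}^{n-1}$ and $\G_{m,\Funsq}^{n-1}$ forming the rank space, is determined by its base extension to $\Z$, which the compatibility square and the requirement that the associated $\Psi$ be an isomorphism pin down to be multiplication of monomial matrices; so $\mu'=\mu$.

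For the second assertion, $\R_{\geq0}$ is a semiring, so Tits and locally algebraic morphisms into $\SL_{n,\Fun}$ agree and $\SL_{n,\Fun}(\R_{\geq0})=\Hom_{\bp}(\Fun[SL_n],\R_{\geq0})$. Given such a $\phi$, set $A=(\phi(\Delta_{\{i\},\{j\}}))\in\Mat_n(\R_{\geq0})$. Each Laplace expansion — of a minor $\Delta_{I,J}$, and of $\det=1$ — becomes, after moving all negative terms to the other side, a relation with non-negative integer coefficients that holds in $\Z[\SL_n]$ and hence lies in $\cR$; applying $\phi$ and using $\R_{\geq0}\subset\R$ to solve for the distinguished term, one obtains inductively $\phi(\Delta_{I,J})=\Delta_{I,J}(A)$ for all $I,J$, and in particular $\det A=1$. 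Thus $\phi$ is recovered from $A$, and the only constraint, $\phi(\Delta_{I,J})\in\R_{\geq0}$, says exactly that every minor of $A$ is non-negative, i.e.\ $A$ is a totally non-negative matrix of determinant $1$. Conversely, such an $A$ defines a ring homomorphism $\Z[\SL_n]\to\R$ under which every relation of $\cR$ turns into an equality of sums of non-negative reals, hence into a relation valid in $\R_{\geq0}$; composing with $\Fun[\Delta_{I,J}]\to\Z[\SL_n]$ gives a blueprint morphism $\phi_A\colon\Fun[SL_n]\to\R_{\geq0}$. These assignments are mutually inverse, and they are semigroup isomorphisms: the $1\times1$ instance of the comultiplication gives $(\phi_A\ast\phi_B)(\Delta_{\{i\},\{j\}})=\sum_k A_{ik}B_{kj}=(AB)_{ij}$, so $\phi_A\ast\phi_B=\phi_{AB}$, while $AB$ is again totally non-negative by Cauchy--Binet. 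Hence $\SL_{n,\Fun}(\R_{\geq0})$ is the semigroup of all totally non-negative matrices of determinant $1$.

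The main obstacle is the rank-space computation inside the second step: proving that the minimal-rank points of $\SL_{n,\Fun}$ are \emph{precisely} the permutation patterns $\fp_w$, which requires enough control over all prime ideals of $\Fun[SL_n]$ to locate the minimum rank and to exclude extraneous minimal points, and then identifying each $\overline{\fp_w}$ with the asserted torus over $\Fun$ or $\Funsq$ (this is also where the extra input of Reineke is likely to be used). By comparison, the well-definedness of $\cR$, the Cauchy--Binet description of $\mu^+$, the identification $(\SL_{n,\Fun}^\rk)^+_\Z=\Norm_{\SL_n}(T)$, and the computation of $\SL_{n,\Fun}(\R_{\geq0})$ are comparatively formal.
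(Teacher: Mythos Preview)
The paper is a survey and does not prove this theorem; it only states it and attributes it to L\'opez Pe\~na, Reineke and the author, with the underlying machinery developed in \cite{blueprints2}. So there is no ``paper's own proof'' to compare against.

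That said, your outline is broadly in the right spirit---Cauchy--Binet for $\mu^+$, permutation patterns for the rank space, and the minor-nonnegativity reading of $\R_{\geq0}$-points are the expected ingredients---and you correctly flag the rank-space computation as the crux. A few points deserve more care. First, your $\mu^\rk$ must be a locally algebraic morphism of \emph{blue} schemes $X^\rk\times X^\rk\to X^\rk$, not merely its $\Z$-base-change; describing it as ``the restriction to $\Norm_{\SL_n}(T)$ of the multiplication of $\SL_{n,\Z}$'' is not yet that. Concretely, when $w_1,w_2$ are odd and $w_1w_2$ is even you need a blueprint morphism $\G_{m,\Funsq}^{n-1}\times\G_{m,\Funsq}^{n-1}\to\G_{m,\Fun}^{n-1}$, and this has to be written down at the level of monoids with pre-addition. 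Second, your uniqueness argument for $\mu^+$ assumes the group law on $\SL_{n,\Z}$ is rigid, but $\SL_{n,\Z}$ has the outer automorphism $A\mapsto(A^t)^{-1}$; you should argue instead that any semiring comultiplication on $(\Fun[\SL_n])^+$ compatible with some group-scheme isomorphism to $\SL_{n,\Z}$ is forced by positivity (the antipode does not preserve $\N$-coefficients), or else weaken ``unique'' to ``unique up to the evident symmetry''. Third, your claim that ``Tits and locally algebraic morphisms agree'' over $\R_{\geq0}$ invokes the wrong statement: the paper only asserts this for morphisms \emph{between} semiring schemes, and $\SL_{n,\Fun}$ is not one. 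What you actually need (and effectively use) is that $\SL_{n,\Fun}^{\sT}(\R_{\geq0})$ is computed via $\mu^+$ on $\Hom_{\bp}(\Fun[\SL_n],\R_{\geq0})$, which follows from the definition of Tits morphisms once you note that $\Spec\R_{\geq0}$ has trivial rank space.
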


\begin{rem}
 The Tits-Weyl model $\SL_{n,\Fun}$ yields the notion of matrices $A \in \SL_n(R)$ with coefficients in any semiring $R$. This might be of particular interest in the case of idempotent semirings as the tropical real numbers.
\end{rem}

%%%%%%%%%%%%%%%%%%%%%%%%%%%%%%%%%%%%%%%%%%%%%%%%%%%%%%%%%%%%%%%%%%%%%%%%%%%%%%%%%%%%%%%%%%%%%%%%%%%%%%%%%%%%%%%%%%%%%%%%%%%%%%%%%%%%%%%%%%%%%%%%%%%%%%%%%%%%%%%%%%%%%%%%%%%%%%%%
%%%%%%%%%%%%%%%%%%%%%%%%%%%%%%%%%%%%%%%%%%%%%%%%%%%%%%%%%%%%%%%%%%%%%%%%%%%%%%%%%%%%%%%%%%%%%%%%%%%%%%%%%%%%%%%%%%%%%%%%%%%%%%%%%%%%%%%%%%%%%%%%%%%%%%%%%%%%%%%%%%%%%%%%%%%%%%%%

\section{Buildings}
\label{section: buildings}

Let $\cG$ be a simple Chevalley group of type $\tau$. Then the group $\cG(\F_q)$ of $\F_q$-rational points act on the spherical building $\cB_q$\index{Building} of $\cG$ over $\F_q$. Following Jacques Tits' ideas from \cite{Tits56}, the apartments\index{Apartment} of $\cB_q$, which are Coxeter complexes of type $\tau$, should have an interpretation as the building $\cB_1$ of $\cG$ over $\Fun$. This building $\cB_1$ should come together with an action of the Weyl group, which can be thought of as the Chevalley group of type $\tau$ over $\Fun$.

A first observation is the following. It hints that the combinatorial nature of monoidal schemes has something to do with Coxeter complexes. As usual, we draw the topological space of blue schemes of finite type over $\Fun$ as a set of points together with vertical lines that indicate that the point at the top is the specialization of the point at the bottom. The topological space of $\P^2_\Fun$ without its generic point (cf. Example \ref{ex: projective line and plane}) can be considered as the Coxeter complex of type $A_2$ as illustrated in Figure \ref{figure: p2 and the coxeter complex of type a2}.
\begin{figure}[h]
 \begin{center}
\includegraphics{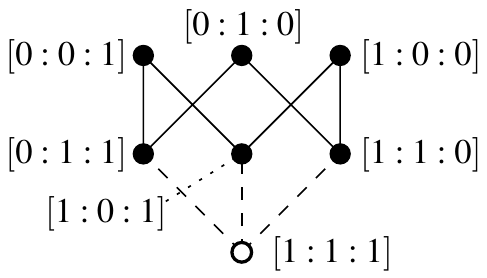} \hspace{2cm}\includegraphics{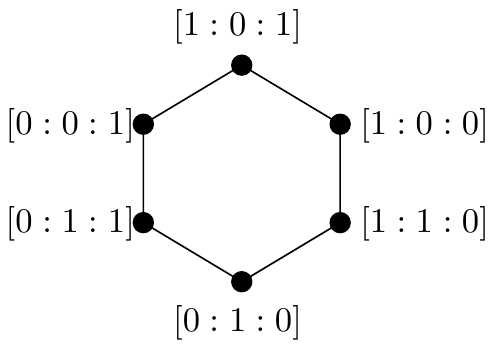}
  \caption{The projective plane over $\Fun$ and the Coxeter complex of type $A_2$}
  \label{figure: p2 and the coxeter complex of type a2}
 \end{center} 
\end{figure}

Note that the Weyl group $W$ of type $A_2$, which is the permutation group $S_3$ on three elements, acts by permuting the coordinates of the tuples $[x_0:x_1:x_2]$. For more explanations on this philosophy, see \cite{Thas13}.

\subsection{A first incidence geometry}
\label{subsection: A first incidence geometry associated with blue schemes}

The previous example extends to a correspondence of $\P^n_\Fun$ (without its generic point) and the Coxeter complex of type $A_n$ for all $n\geq1$, see Example \ref{ex: projective spaces and coxeter complexes of type a_n} below. Also Coxeter complexes of other types can be identified with certain subcomplexes of complexes associated with blue schemes.

Let $X$ be a blue scheme. We define a partial ordering of its points by $x<y$ if and only if $x\in\bary$, but $x\neq y$ where $\bary$ denotes the closure of $y$ in $X$. We define 
\[
 \tilde\Delta^n(X) \ = \ \{ \ (x_0,\dotsc,x_r)\in X^{r+1} \ | \ x_0<\dotsc< x_r \ \}. 
\]
Together with the obvious face relations, this yields a simplicial complex 
\[
 \tilde\Delta(X) \quad = \quad \bigcup_{r\geq0} \tilde\Delta^r(X).
\]
The \emph{type of a simplex $(x_0,\dotsc,x_r)$}\index{Type of a simplex} is the tuple $(\rk\, x_0,\dotsc, \rk\, x_r)$ of the ranks of the points $x_l$. 

\begin{ex}[Projective spaces and Coxeter complexes of type $A_n$]\label{ex: projective spaces and coxeter complexes of type a_n}
 If $X=\P^n_\Fun$, then the simplices of $\tilde\Delta(\P^n_\Fun)$ are chains of proper homogeneous prime ideals $\fp^{I_r}\subsetneq\dotsc\subsetneq\fp^{I_0}$ of $\Fun[X_0,\dotsc, X_n]$ where the $I_l$ are non-zero subsets of $\un=\{0,\dotsc,n\}$ and $\fp^{I_l}=(X_i)_{i\in \un-I_l}$. Such a chain of homogeneous ideals can be identified with the chain of subsets $I_0 \subsetneq\dotsc\subsetneq I_r$ of non-zero subsets of $\un$. The subcomplex of $\tilde\Delta(\P^n_\Fun)$ that consists of all chains of proper \emph{non-zero} ideals  $\fp^{I_r}\subsetneq\dotsc\subsetneq\fp^{I_0}$ corresponds to the Coxeter complex $\cC$ of type $A_n$, whose simplices are chains of proper non-empty subsets $I_0 \subsetneq\dotsc\subsetneq I_r$. 

 Let $X$ be the closed subscheme of $\P^n_\Fun$ that is supported on the complement of the generic point of $\P^n_\Fun$, which is a closed subset. Then $\tilde\Delta(X)$ consists of all chains of proper non-zero ideals, and therefore the above reasoning establishes an identification of $\tilde\Delta(X)$ with $\cC$.

 The type of a chain $\fp^{I_r}\subsetneq\dotsc\subsetneq\fp^{I_0}$ is $(\#I_0,\dotsc,\#I_r)$, which equals the type of the simplex $I_0 \subsetneq\dotsc\subsetneq I_r$ of the Coxeter complex $\cC$. This means that the identification of $\tilde\Delta(X)$ with $\cC$ is type preserving.
\end{ex}

\subsection{Representations of Tits-Weyl models}
\label{subsection: Representation of Tits-Weyl models}

Let $\cG$ be a simple Chevalley group of type $\tau$ and $G$ a Tits-Weyl model of $\cG$. A \emph{unitary action of $G$ on the projective space $\P^n_\Fun$} is a Tits morphism $\theta:G\times \P^n_\Fun\to\P^n_\Fun$ such that the diagrams
\[
 \xymatrix@C=4pc{G\times G\times {\P^n_\Fun} \ar[r]^{(\mu,\id_{\P^n_\Fun})} \ar[d]_{(\id_G,\theta)}  & G\times {\P^n_\Fun} \ar[d]^{\theta} \\ G\times {\P^n_\Fun} \ar[r]_{\theta} & {\P^n_\Fun}} \qquad \text{and} \qquad 
 \xymatrix@C=4pc{\Spec\Fun\times {\P^n_\Fun} \ar[d]_{(\epsilon,\id_{\P^n_\Fun})} \ar@{=}[r] & {\P^n_\Fun} \\ G\times {\P^n_\Fun}\ar[ru]_\theta }
\]
commute where $\mu:G\times G\to G$ is the group law and $\epsilon:\Spec\Fun\to G$ is the identity of $G$. We call such an action a \emph{representation of $G$ on $\P^n_\Fun$}\index{Representation of a Tits-Weyl model}.

Applying the Weyl extension functor $\cW$, we obtain an action 
\[
 \theta^\rk: \ W\times \cW(\P^n_\Fun) \quad \longrightarrow \quad \cW(\P^n_\Fun)
\] 
of the Weyl group $W=\cW(G)$ of $G$ on the set $\cW(\P^n_\Fun)$ of closed points of $\P^n_\Fun$, which can be identified with $\un=\{0,\dotsc,n\}$ as explained in Example \ref{ex: projective spaces and coxeter complexes of type a_n}. This extends uniquely to an action of $W$ on the underlying topological space of $\P^n_\Fun$ and on the simplicial complex $\tilde\Delta(\P^n_\Fun)$ by permuting the indices, which are elements of $\un$.

\subsection{Coxeter complexes of the classical types} 
\label{subsection: coxeter complexes of the classical types}

We describe the Coxeter complexes $\cC$ of the four classical types $A_n$, $B_n$, $C_n$ and $D_n$, together the action of the Weyl group of the classical Chevalley group $\cG$ of the corresponding type on $\cC$, in terms of representations of Tits-Weyl models $G$ of $\cG$. Similarly, we obtain a description of the spherical building $\cB_q$ over $\F_q$ together with the action of $\cG(\F_q)$ from this representation of $G$---unless $\cG$ is of type $D_n$.

\subsubsection*{Type $A_n$}
\label{subsubsection: type a_n}

Let $\cG=\SL_{n,\Z}$ and $G$ the Tits-Weyl model that is defined by the standard matrix representation of $\SL_n$, cf.\ Section 4.1 of \cite{blueprints2}. Then $G$ comes with a natural representation on $\P^n_\Fun$.

We have already seen in Example \ref{ex: projective spaces and coxeter complexes of type a_n} that $\tilde\Delta(P^n_\Fun-\{\eta\})$ is the Coxeter complex $\cC$ of type $A_n$ where $\eta$ denotes the generic point of $\P^n_\Fun$. Note that the Weyl group $W=S_{n+1}$ of type $A_n$ acts on $\cC$ by permuting the indices.

The spherical building $\cB_q$ of type $A_n$ over $\F_q$ can be described in terms of flags of proper non-zero subspaces of $\F_q^{n+1}$, in analogy the Coxeter complex $\cC$. A linear proper non-zero subspace of $\F_q^{n+1}$ corresponds to a proper linear subvariety of $\SP^n_{\F_q}$, which means that we can identify every simplex of $\cB_q$ with a simplex in $\tilde\Delta(\SP^n_{\F_q})$. Let $\Delta_q$ be the $n$-simplex in $\tilde\Delta(\SP^n_{\F_q})$ that is given by the chain $x_0<\dotsc< x_n$ where $x_l$ is the generic point of the linear subvariety of $\SP^n_{\F_q}$ that is defined by the equations $X_0=\dotsc=X_l=0$.

Knowing that $G(\F_q)$ acts transitively on $\cB_q$, we obtain the following result.

\begin{thm}
 The complex $\cC=\tilde\Delta(P^n_\Fun-\{\eta\})$ is the Coxeter complex of type $A_n$ and the representation of $G$ on $\P^n_\Fun$ yields the usual action of the Weyl group $W=\cW(G)$ on $\cC$. Taking $\F_q$-rational points yields an action of $\SL_n(\F_q)$ on the complex $\tilde\Delta(\SP^n_{\F_q})$ such that the orbit of $\Delta_q$ is the spherical building $\cB_q$ of type $A_n$ over $\F_q$ and such that the restricted action of $\SL_n(\F_q)$ is the usual action on $\cB_q$.
\end{thm}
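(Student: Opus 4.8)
\emph{Sketch of the argument.} The first assertion is essentially contained in Example \ref{ex: projective spaces and coxeter complexes of type a_n}: one identifies the simplices of $\tilde\Delta(\P^n_\Fun-\{\eta\})$ with the chains of proper non-zero homogeneous primes $\fp^{I_r}\subsetneq\dotsc\subsetneq\fp^{I_0}$ of $\Fun[X_0,\dotsc,X_n]$, these with the chains $I_0\subsetneq\dotsc\subsetneq I_r$ of proper non-empty subsets of $\un$, and checks that the identification is type-preserving; hence $\cC=\tilde\Delta(\P^n_\Fun-\{\eta\})$ is the Coxeter complex of type $A_n$. For the Weyl group action, I would apply the Weyl extension functor $\cW$ to the representation $\theta\colon G\times\P^n_\Fun\to\P^n_\Fun$. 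Since $\cW$ commutes with finite products (\cite[Thm.\ 3.8]{blueprints2}), this yields an action $\theta^\rk\colon W\times\cW(\P^n_\Fun)\to\cW(\P^n_\Fun)$ of $W=\cW(G)$ on the $(n+1)$-element set of closed points of $\P^n_\Fun$. To identify this action I would base-extend to $\Z$: because $G$ is the Tits-Weyl model attached to the standard matrix representation (Section 4.1 of \cite{blueprints2}), the Grothendieck base extension of $\theta$ is the standard action of $\cG=\SL_{n,\Z}$ on $\SP^n_\Z$, and by Theorem \ref{thm: properties of tits-weyl groups} the group $\cW(G)$ is the ordinary Weyl group $\Norm_\cG(T)/\Cent_\cG(T)\cong S_{n+1}$, which acts on the coordinate points $[e_0],\dotsc,[e_n]$ of $\SP^n_\Z$ through permutation matrices, the diagonal torus acting trivially on those points. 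The commuting square relating $\rho_X$ and $\beta_X$ then forces $\theta^\rk$ to be the permutation action on $\cW(\P^n_\Fun)=\un$; and, as recalled in Section \ref{subsection: Representation of Tits-Weyl models}, this action extends uniquely to a type-preserving simplicial action on all of $\tilde\Delta(\P^n_\Fun)$ by permuting the indices of $\fp^I=(X_i)_{i\in\un-I}$, which is precisely the standard $S_{n+1}$-action on $\cC$.

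For the statement over $\F_q$ I would first transport the picture to $\F_q$. The Tits morphism $\theta$ base-extends through $(\blanc)^+$ and then along $\Z\to\F_q$ to the standard action $\SL_{n,\F_q}\times\SP^n_{\F_q}\to\SP^n_{\F_q}$; concretely, each $g\in\SL_n(\F_q)=\cG(\F_q)$ acts as an $\F_q$-automorphism of $\SP^n_{\F_q}$, hence as a homeomorphism of its space preserving the specialization order, hence as a type-preserving automorphism of the simplicial complex $\tilde\Delta(\SP^n_{\F_q})$. Next I would realize $\cB_q$ as a subcomplex: a proper non-zero subspace $V\subset\F_q^{n+1}$ corresponds to the linear subvariety $\P(V)\subset\SP^n_{\F_q}$, and since $V\subset V'$ if and only if $\P(V)\subset\P(V')$, sending $V$ to the generic point $\eta_V$ of $\P(V)$ identifies the flag complex of $\F_q^{n+1}$, i.e.\ $\cB_q$, with a type-preserving subcomplex of $\tilde\Delta(\SP^n_{\F_q})$, under which $\Delta_q$ is the chamber attached to the standard complete flag. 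Since $\SL_n(\F_q)$ permutes linear subvarieties according to its action on subspaces and acts transitively on the complete flags of $\F_q^{n+1}$, the subcomplex generated by the $\SL_n(\F_q)$-orbit of $\Delta_q$ is exactly the set of all flags, i.e.\ $\cB_q$; and the induced action of $\SL_n(\F_q)$ on it is, by construction, the usual action on $\cB_q$.

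I expect the heart of the matter to be the identification of $\theta^\rk=\cW(\theta)$ with the coordinate permutation, i.e.\ matching the abstract rank-space and Weyl-kernel formalism against the explicit combinatorics, together with the claim that this action on the closed points extends in a \emph{unique} way to $\tilde\Delta(\P^n_\Fun)$. Both rely on the explicit form of the Tits-Weyl model of $\SL_{n,\Z}$ and on the compatibility of $\cW$ with base extension from Theorem \ref{thm: properties of tits-weyl groups}. A more routine preliminary that still deserves care is verifying that the standard action of $\SL_n$ on $\P^n$ genuinely descends to a \emph{Tits} morphism $\theta\colon G\times\P^n_\Fun\to\P^n_\Fun$ in the sense of Section \ref{subsection: Representation of Tits-Weyl models}: here one uses that Tits and locally algebraic morphisms agree on semiring schemes, so it suffices to exhibit compatible data $\theta^\rk$ and $\theta^+$ over $\Z$.
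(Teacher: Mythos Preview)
Your proposal is correct and follows essentially the same line as the paper. In fact the paper gives no proof at all beyond the setup paragraphs preceding the theorem statement: it invokes Example \ref{ex: projective spaces and coxeter complexes of type a_n} for the identification $\cC=\tilde\Delta(\P^n_\Fun-\{\eta\})$, describes $\cB_q$ via flags of subspaces realized as linear subvarieties, and then simply says ``Knowing that $G(\F_q)$ acts transitively on $\cB_q$, we obtain the following result.'' Your sketch supplies exactly these ingredients, and your added detail on identifying $\cW(\theta)$ with the permutation action via the compatibility square and Theorem \ref{thm: properties of tits-weyl groups} is a reasonable elaboration of what the paper leaves implicit (recall that the results of this section are flagged in the introduction as not yet published).
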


\subsubsection*{Type $B_n$ and $C_n$}
\label{subsubsection: type b_n and c_n}

Let $\cG$ be a split form of $\SO(2n+1)$ or $\Sp(2n)$, which is a Chevalley group of type $B_n$ resp.\ $C_n$. Let $m$ be $2n+1$ resp.\ $2n$ and let $G$ be the Tits-Weyl model that is associated with the standard representation of $\cG$ as $m\times m$-matrices (see Sections 4.3.3 and 4.3.4 of \cite{blueprints2} for an explicit description of $\cG$ and $G$). Then $G$ comes together with a representation on $\P^{m-1}_\Fun$. This induces an action of the Weyl group $W=\cW(G)$ on $\tilde\Delta(\P^{m-1}_\Fun)$ and an action of $G(\F_q)$ on $\tilde\Delta(\SP^{m-1}_{\F_q})$.

Note that the representation of $G$ on $\P^{m-1}_\Fun$ comes from an action of $\cG$ on $\SA^m_\Z$, which preserves a certain quadratic form. Let $0\subsetneq V_0\subsetneq\dotsc\subsetneq V_{n-1}$ be a maximal flag of isotropic subspaces of $\SA^m_\Z$ and let $x_0,\dotsc,x_{n-1}$ denote the generic points of the corresponding linear subvarieties of $\SP^{m-1}_\Z$. The quadratic form and the subspaces $V_0,\dotsc,V_{n-1}$ can be chosen such that the linear subvarieties $\overline{x_0},\dotsc,\overline{x_n}$ are defined over $\Fun$, which defines the corresponding points $x_{0,1},\dotsc,x_{n,1}$ in $\P_\Fun^{m-1}$. We denote by $\Delta_1$ the $(n-1)$-simplex $(x_{0,1}<\dotsc<x_{n-1,1})$ in $\tilde\Delta(\P^{m-1}_{\Fun})$. For a prime power $q$, we denote by $\Delta_q$ the $(n-1)$-simplex $(x_{0,q}<\dotsc<x_{n-1,q})$ in $\tilde\Delta(\SP^{m-1}_{\F_q})$ where $x_{l,q}$ is the generic point of $\overline{x_l}\times_\Z\F_q$ in $\SP^{m-1}_{\F_q}$. 

Then we obtain the following result.

\begin{thm}
 The orbit $\cC=W.\Delta_1$ is isomorphic to the Coxeter complex of type $B_n$ resp.\ $C_n$, and the action of $W$ on $\cC$ coincides with the usual action on the Coxeter complex. The orbit $\cB_q=G(\F_q).\Delta_q$ is isomorphic to the spherical building of type $B_n$ resp.\ $C_n$, and the action of $G(\F_q)$ on $\cB_q$ coincides with the usual action on the spherical building.
\end{thm}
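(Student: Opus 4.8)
The plan is to reduce both statements to the classical combinatorial models of the Coxeter complex and of the spherical building of type $B_n$, resp.\ $C_n$, as flag complexes of isotropic subspaces, and then to recognise the orbits $W.\Delta_1$ and $G(\F_q).\Delta_q$ as precisely these. I would first record that $G$ is a Tits-Weyl model of $\cG$ (Theorem \ref{thm: chevalley groups over f1}, or Sections 4.3.3 and 4.3.4 of \cite{blueprints2}), so by Theorem \ref{thm: properties of tits-weyl groups}(i) the set $\cW(G)$ is canonically the ordinary Weyl group $W$ of type $B_n$, resp.\ $C_n$, i.e.\ the group of signed permutations of $\{1,\dotsc,n\}$. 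Fix a hyperbolic basis $e_1,\dotsc,e_n,f_1,\dotsc,f_n$ (together with an anisotropic vector $e_0$ in the orthogonal case) adapted to the form preserved by $\cG$ on $\SA^m_\Z$ and chosen so that the isotropic flag of the statement is $V_l=\langle e_1,\dotsc,e_{l+1}\rangle$; then the closed points of $\P^{m-1}_\Fun$, i.e.\ the coordinate lines, are indexed by $\{\pm1,\dotsc,\pm n\}$ (plus $0$ in the orthogonal case), and by Example \ref{ex: projective spaces and coxeter complexes of type a_n} the simplices of $\tilde\Delta(\P^{m-1}_\Fun)$ are the chains of nonzero subsets of this index set. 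Applying the Weyl extension $\cW$ to the representation $\theta$ and unwinding the construction of $G$ from its matrix representation, one checks that $\cW(\theta)$ is the signed-permutation action on the index set (the sign change at $i$ being realised by the torus-normalising element interchanging $e_i$ and $f_i$), whence the induced $W$-action on $\tilde\Delta(\P^{m-1}_\Fun)$ permutes chains of subsets by signed permutations.

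For the first statement, the simplex $\Delta_1$ corresponds under this dictionary to the maximal flag $\{+1\}\subsetneq\{+1,+2\}\subsetneq\dotsb\subsetneq\{+1,\dotsc,+n\}$ of \emph{admissible} subsets of $\{\pm1,\dotsc,\pm n\}$, where a subset is admissible if it contains no pair $\{i,-i\}$. Signed permutations preserve admissibility, so every simplex of $W.\Delta_1$ is an admissible flag; conversely every admissible flag extends to a maximal one, and $W$ acts transitively on maximal admissible flags, each such flag being given by an ordered sequence of $n$ elements of $\{\pm1,\dotsc,\pm n\}$ with pairwise distinct absolute values. Hence the subcomplex generated by $W.\Delta_1$ is the complex of all admissible flags, which is exactly the Coxeter complex $\cC$ of type $B_n$, resp.\ $C_n$ (the barycentric subdivision of the boundary of the cross-polytope), and the $W$-action on it is by construction the usual one.

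For the second statement I would run the same argument after base change to $\F_q$. The representation $\theta$ base-extends to the standard linear action of $\cG_{\F_q}$ on $\SP^{m-1}_{\F_q}$ preserving the form, and taking $\F_q$-rational points (which for the ring $\F_q$ recovers $\cG(\F_q)$) gives the induced action of $G(\F_q)=\cG(\F_q)$ on $\tilde\Delta(\SP^{m-1}_{\F_q})$, with $\Delta_q$ corresponding to the standard isotropic flag $V_0\otimes\F_q\subsetneq\dotsb\subsetneq V_{n-1}\otimes\F_q$ of $\F_q^m$. Since $\cG(\F_q)$ preserves isotropy, the orbit lies in the flag complex of isotropic subspaces; by Witt's theorem $\cG(\F_q)$ is transitive on maximal isotropic flags (equivalently, the stabiliser of $\Delta_q$ is a Borel subgroup), and every isotropic flag extends to a maximal one, so $G(\F_q).\Delta_q$ generates the whole flag complex of isotropic subspaces, i.e.\ the spherical building $\cB_q$ of type $B_n$, resp.\ $C_n$, carrying its usual $\cG(\F_q)$-action. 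The only point needing extra care is the characteristic-$2$ orthogonal case, where the quadratic form on $\SA^{2n+1}$ degenerates; there one passes to the symplectic (type $C_n$) description, using the standard identification of the two buildings over such fields.

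I expect the step requiring the most care to be the one in the first paragraph: verifying that the actions on $\tilde\Delta(\P^{m-1}_\Fun)$ and on $\tilde\Delta(\SP^{m-1}_{\F_q})$ produced by the Tits morphism $\theta$ really are the signed-permutation and standard linear actions, respectively. This is the point where the blue-scheme formalism (rank space, Weyl extension, base change) has to be matched with classical linear algebra through the chosen matrix representation of $\cG$; once this is in place, the remaining content is the well-known combinatorial description of the type-$B_n$/$C_n$ Coxeter complex and building, together with transitivity of the relevant group on chambers.
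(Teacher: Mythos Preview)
The paper does not actually prove this theorem: Section~\ref{section: buildings} is explicitly flagged in the introduction to Part~\ref{partII} as a preview of results ``not yet published,'' and the theorem is stated without argument. So there is no proof in the paper to compare against.

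That said, your outline is sound and is the natural line of argument. The identification of $\cW(G)$ with the hyperoctahedral group, the description of $\tilde\Delta(\P^{m-1}_\Fun)$ via chains of coordinate subsets (Example~\ref{ex: projective spaces and coxeter complexes of type a_n}), and the recognition of the orbit $W.\Delta_1$ as the flag complex of admissible subsets of $\{\pm1,\dotsc,\pm n\}$ all go through as you indicate; the latter is precisely the standard combinatorial model for the type $B_n/C_n$ Coxeter complex as the barycentric subdivision of the boundary of the cross-polytope. Likewise, Witt's theorem (or the Bruhat decomposition) gives transitivity on maximal isotropic flags and hence the identification of $G(\F_q).\Delta_q$ with the isotropic flag complex, which is the spherical building.

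You are right that the point requiring genuine work is verifying that the Weyl extension $\cW(\theta)$ of the Tits morphism $\theta$ is the signed-permutation action on $\cW(\P^{m-1}_\Fun)$. This amounts to checking that the rank-space morphism $\theta^\rk$ reads off, from the explicit Tits--Weyl models of Sections~4.3.3--4.3.4 of \cite{blueprints2}, the expected monomial matrices permuting the coordinate lines; once this is in hand the rest is classical. One small correction: in the characteristic-$2$ orthogonal case it is the associated \emph{bilinear} form that acquires a radical, not the quadratic form itself; the isotropic flag complex is still well-defined and coincides with the symplectic building, as you say. The only hint the paper gives toward its own intended argument is the remark before the theorem in Section~\ref{subsection: coxeter complexes of exceptional type} that ``the characterization of simplices as cosets of parabolic subgroups of the Coxeter group'' yields the general result, which suggests a uniform parabolic-coset approach rather than the type-by-type combinatorial model you use; your route is more concrete for the classical types and perfectly adequate here.
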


\subsubsection*{Type $D_n$}
\label{subsubsection: type d_n}

Let $\cG$ be a split form of $\SO(2n)$, which is a Chevalley group of type $D_n$. Let $m=2n$ and let $G$ be the Tits-Weyl model that is associated with the standard representation of $\cG$ as $m\times m$-matrices (see Section 4.3.4 of \cite{blueprints2} for a explicit description of $\cG$ and $G$).  Then $G$ comes together with a representation on $\P^{m-1}_\Fun$. This induces an action of the Weyl group $W=\cW(G)$ on $\tilde\Delta(\P^{m-1}_\Fun)$ and an action of $G(\F_q)$ on $\tilde\Delta(\SP^{m-1}_{\F_q})$.

We define the simplices $\Delta_1$ and $\Delta_q$ analogously as for the types $B_n$ and $C_n$, and obtain the following result.

\begin{thm}
 The orbit $\cC=W.\Delta_1$ is isomorphic to the Coxeter complex of type $B_n$ resp.\ $C_n$, and the action of $W$ on $\cC$ coincides with the usual action on the Coxeter complex. 
\end{thm}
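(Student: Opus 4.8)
The plan is to run the same three-step scheme that handles types $A_n$ and $B_n$, $C_n$, but with the $D_n$-specific complications made explicit. Step one: make the action of $W=\cW(G)$ on the homogeneous coordinates of $\P^{m-1}_\Fun$ completely explicit. Step two: identify the orbit $\cC=W.\Delta_1$ inside $\tilde\Delta(\P^{m-1}_\Fun)$ with a purely combinatorial complex of chains of ``coordinate'' isotropic subspaces. Step three: match this combinatorial complex, together with its $W$-action, with the standard realisation of the Coxeter complex of type $D_n$. Since the Weyl extension $\cW\colon\Sch_\cT\to\Sets$ commutes with finite products (Theorem~3.8 of \cite{blueprints2}), the only input needed is the action $\theta^\rk$ of $W$ coming from the representation $\theta\colon G\times\P^{m-1}_\Fun\to\P^{m-1}_\Fun$, after which the whole argument takes place on the combinatorial side.

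For step one I would fix a hyperbolic basis $e_1,\dots,e_{2n}$ of $\SA^m_\Z$ (with $m=2n$) so that the $\cG$-invariant quadratic form is $Q=\sum_{i=1}^n X_iX_{2n+1-i}$, and take $\Delta_1$ to be the $(n-1)$-simplex $x_{0,1}<\dots<x_{n-1,1}$ whose $l$-th vertex is the generic point of the linear subvariety $\{X_{l+2}=\dots=X_{2n}=0\}$ of $\P^{2n-1}_\Fun$; from the root datum of type $D_n$ (equivalently, from the explicit matrix description in Section 4.3.4 of \cite{blueprints2}) one reads off that $W$ acts on the $2n$ homogeneous coordinates as the subgroup of the hyperoctahedral group $(\Z/2)^n\rtimes S_n$ consisting of signed permutations with an even number of sign changes, a sign change in slot $i$ interchanging the pair $(X_i,X_{2n+1-i})$. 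Step two is then bookkeeping: a point of $\P^{m-1}_\Fun$ is a coordinate subspace $\langle e_j\mid j\in T\rangle$, it is isotropic precisely when $T$ meets each pair $\{j,2n+1-j\}$ in at most one index, a simplex of $\tilde\Delta(\P^{m-1}_\Fun)$ is an increasing chain of such subspaces, and one shows that $\cC=W.\Delta_1$ is exactly the subcomplex of chains $V_0\subsetneq\dots\subsetneq V_r$ of coordinate isotropic subspaces with $\dim V_l=l+1$ whose top member, when the chain has length $n$, lies in the prescribed one of the two families of maximal coordinate isotropics. Here the family is a $W$-invariant; transitivity of $W$ on the intermediate strata follows from transitivity of the full hyperoctahedral group together with the evenness constraint; the type of a chain is its tuple of dimensions; $W$ acts type-preservingly; and a direct computation shows that the stabiliser of $\Delta_1$ in $W$ is trivial, so $W$ acts simply transitively on the top-dimensional simplices of $\cC$.

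The main obstacle is step three, and it is precisely the feature that singles $D_n$ out in this series: the fork at the end of the Dynkin diagram, i.e.\ the splitting of maximal isotropic $n$-spaces into two families. The panel of $\Delta_1$ obtained by deleting the top vertex $x_{n-1,1}$ does \emph{not} lie in two chambers of $\cC$, because the two coordinate $n$-spaces refining it belong to opposite families and only one of them is in the orbit; hence $\cC$, with its naive chain structure, is not a thin chamber complex on the nose, and the identification with the Coxeter complex of type $D_n$ has to pass through the ``oriflamme'' reorganisation. Concretely, one replaces the pair consisting of the dimension-$(n-1)$ and the top dimension-$n$ vertex of a chamber by the pair of maximal isotropics of opposite families sharing that dimension-$(n-1)$ space, and then verifies that the resulting bijection on chambers extends to a $W$-equivariant, type-preserving isomorphism onto the Coxeter complex of type $D_n$; the degenerate low-rank cases $D_2=A_1\times A_1$ and $D_3=A_3$ must be checked separately, reading ``Coxeter complex'' accordingly. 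The same phenomenon explains the absence of an $\F_q$-statement in the theorem: the $\F_q$-points of $\theta$ see ordinary isotropic flags rather than the oriflamme incidence geometry, so the spherical building of type $D_n$ is not recovered from $\tilde\Delta(\SP^{m-1}_{\F_q})$ and no further building argument is needed here.
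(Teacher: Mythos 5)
Your steps one and two are sound: the explicit description of the $W$-action on coordinate subspaces, the identification of the chambers of $\cC=W.\Delta_1$ with full coordinate isotropic flags whose top member lies in one fixed family, and the simple transitivity of $W=\cW(G)$ on these chambers are all correct. The gap is in step three. Having observed, correctly, that the panel of $\Delta_1$ obtained by deleting the top vertex lies in only \emph{one} chamber of $\cC$, you cannot go on to assert that the oriflamme ``reorganisation'' furnishes a $W$-equivariant, type-preserving isomorphism of $\cC$ onto the Coxeter complex of type $D_n$: Coxeter complexes are thin, so your own observation already rules out any such isomorphism. A bijection on chambers is not an isomorphism of simplicial complexes, and here it provably cannot extend to one. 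Concretely, $\cC$ has $n\,2^{n-1}$ vertices given by the $(n-1)$-dimensional coordinate isotropics and $2^{n-1}$ vertices given by the maximal isotropics of one family, whereas the $D_n$ Coxeter complex has no vertices of the former kind and $2^n$ vertices of the two spinor types; moreover any two distinct panel-adjacent chambers of $\cC$ share their top vertex, so the chamber graph of $\cC$ splits into $2^{n-1}$ components (each component being the cone, with cone point the common top, over a Coxeter complex of type $A_{n-1}$), while the chamber graph of a Coxeter complex is connected. The oriflamme construction does not produce a map out of $\cC$; its ``chambers'' contain two incomparable maximal isotropics and hence are not chains, so they are not simplices of $\tilde\Delta(\P^{2n-1}_\Fun)$ at all. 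This is precisely why the paper introduces the extended complex $\Delta(X)$ (order-preserving maps from the poset of nonempty subsets of $\un$) and proves the $D_n$ statement there for the orbit $W.\delta_1$ of a different simplex $\delta_1\in\Delta(\P^{2n-1}_\Fun)$, not for $W.\Delta_1$ inside $\tilde\Delta$.

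Two further remarks. First, the statement as printed says ``Coxeter complex of type $B_n$ resp.\ $C_n$''; under that literal reading the claim fails at once by counting, since $W$ is the Weyl group of type $D_n$ and acts simply transitively on the chambers of $\cC$, giving $2^{n-1}n!$ chambers against the $2^n n!$ chambers of the $B_n$ Coxeter complex. Your tacit replacement of $B_n/C_n$ by $D_n$ is the only sensible reading, but, as explained above, that reading is exactly what cannot be established for $W.\Delta_1$ in $\tilde\Delta$. Second, your closing claim that the oriflamme phenomenon only obstructs the $\F_q$-statement is mistaken: the panel-in-one-chamber defect you found is the $q=1$ incarnation of the non-thickness discussed in the paper's remark, so it obstructs the Coxeter-complex statement in the same way. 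A correct treatment must either pass to the extended complex and the simplex $\delta_1$, as the paper does in its oriflamme subsection, or content itself with a weaker description of $\cC$; it cannot conclude that $\cC$ itself is the Coxeter complex of type $D_n$.
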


\begin{rem}[The problem with the oriflamme construction]\label{rem: problem with the oriflamme construction}
 The orbit $G(\F_q).\Delta_q$ is a building, but this building is not thick and therefore not isomorphic to the spherical building $\cB_q$ of type $D_n$, cf.\ the introducing remarks of Chapter 11 in \cite{Garrett97}. This problem is usually solved by the ``oriflamme construction'' in which one considers the complex associated with pairs of flags $0\subsetneq V_0\subsetneq \dotsb\subsetneq V_{n-3}\subsetneq V_{n-1}$ and $0\subsetneq V_0\subsetneq \dotsb\subsetneq V_{n-3}\subsetneq V_{n-1}'$.
\end{rem}

\subsection{An extended incidence geometry}
\label{subsection: An extended incidence geometry}

The failure of realizing buildings of type $D_n$ as a natural subcomplex of $\tilde\Delta(\SP^{2n-1}_{\F_q})$ leads to the following extension of $\tilde\Delta(X)$ to a simplicial complex $\Delta(X)$.

As before, we consider a blue scheme $X$ as a poset given by its underlying set together with the partial order as defined by $x\leq y$ if and only if $x\in\bary$. We define an \emph{$n$-simplex}\index{Simplex} as the poset $\Delta^n$ of all non-empty subsets of $\un$. This is the same as the poset associated with $\P^n_\Fun$.

Let $X$ be a blue scheme. An \emph{$n$-simplex in $X$}\index{Simplex in a blue scheme} is an order-preserving map $\delta: \Delta^n \to X$. Let $\Delta^n(X)$ be the collection of $n$-simplices in $X$. Then
\[ \Delta(X) \ = \ \bigcup_{n\geq 0} \Delta^n(X) \]
together with the obvious face relation is a simplicial complex. 

Note that a chain $x_0<\dotsb<x_n$, together with all its subchains, can be identified with the $n$-simplex $\delta: \Delta^n\to X$ in $X$ that sends a non-zero subset $I$ of $\un$ to $\sup\{x_i\}_{i\in I}=x_{\max I}$. This shows that $\tilde\Delta(X)$ is a subcomplex of $\Delta(X)$. This simplicial complex behaves well with respect to linear representations of Chevalley groups.

\begin{prop}
 Let $\cG$ be a Chevalley group with Tits-Weyl model $G$ and $G\times \P^n_\Fun\to \P^n_\Fun$ a representation of $G$. Then the action of the Weyl group $W=\cW(G)$ on $\cW(\P^n_\Fun)$ extends uniquely to an action of $W$ on $\Delta(\P^n_\Fun)$.
\end{prop}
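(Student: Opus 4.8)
The plan is to upgrade the already-available action $\theta^\rk: W \times \cW(\P^n_\Fun) \to \cW(\P^n_\Fun)$, together with the observation (made in the text preceding the proposition) that this already extends uniquely to an action of $W$ on the underlying topological space of $\P^n_\Fun$ and on $\tilde\Delta(\P^n_\Fun)$, to an action on the larger simplicial complex $\Delta(\P^n_\Fun)$. The essential point is that $\Delta(\P^n_\Fun)$ is built functorially from the poset structure of $\P^n_\Fun$: an $n$-simplex is simply an order-preserving map $\delta:\Delta^m \to \P^n_\Fun$. So the first step is to produce, from the representation $\theta: G \times \P^n_\Fun \to \P^n_\Fun$, an action of $W$ on $\P^n_\Fun$ \emph{as a poset} — i.e.\ by order-preserving bijections of the underlying set. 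This I would extract from $\cW(\theta)$ and the base-extension data: the underlying set of $\cW(\P^n_\Fun)$ is $\un=\{0,\dots,n\}$, and as recalled in Example \ref{ex: projective spaces and coxeter complexes of type a_n} the points of $\P^n_\Fun$ are the $\fp_I=(X_i)_{i\in\un\setminus I}$ indexed by proper subsets $I\subseteq\un$, with $\fp_I \leq \fp_J$ iff $I\subseteq J$. The Weyl group $W=S_{n+1}$ acts on $\un$, hence on the set of subsets of $\un$, visibly preserving inclusion; this is exactly the action on the topological space already alluded to in the text, and it is by poset automorphisms.

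The second step is to check that $W$ acting by poset automorphisms of $\P^n_\Fun$ induces an action on $\Delta(\P^n_\Fun)$ compatible with faces. Given $w\in W$ and an $m$-simplex $\delta:\Delta^m\to\P^n_\Fun$, one sets $(w\cdot\delta) = w\circ\delta$, which is again order-preserving since $w$ is; the simplicial (face) relations are respected because passing to a subset $\Delta^{m'}\hookrightarrow\Delta^m$ commutes with post-composition by $w$. This gives a well-defined simplicial action of $W$ on $\Delta(\P^n_\Fun)$ restricting to the already-established action on $\tilde\Delta(\P^n_\Fun)$ (via the identification of a chain $x_0<\dots<x_m$ with $I\mapsto x_{\max I}$) and on $\cW(\P^n_\Fun)$ on $0$-simplices. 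For uniqueness, the key observation is that every simplex of $\Delta(\P^n_\Fun)$ factors through its image, which is a subposet of $\P^n_\Fun$ determined by the point(s) it hits, and the $0$-simplices (points) generate $\Delta(\P^n_\Fun)$ under the poset/face structure; hence any $W$-action extending the prescribed one on $\cW(\P^n_\Fun)$ and compatible with faces is forced on every simplex. More precisely, an extending action must send the $0$-simplex at a point $x$ to the $0$-simplex at $w.x$, where $w.x$ is determined because $W$ acts on the whole space $\P^n_\Fun$ extending $\theta^\rk$ uniquely (as already noted); and then functoriality of $\Delta(-)$ in poset maps pins down the action on higher simplices.

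The one point requiring genuine care — and the step I expect to be the main obstacle — is justifying that the action of $W$ on $\cW(\P^n_\Fun)$ really does extend \emph{uniquely} to an action on the underlying space of $\P^n_\Fun$ by \emph{poset} automorphisms, i.e.\ that the abstract permutation action on indices is forced and is order-preserving. The forcing comes from the fact that in $\P^n_\Fun$ the closed points are exactly the $\fp_I$ with $I$ of cardinality $1$ (up to the conventions of Example \ref{ex: projective spaces and coxeter complexes of type a_n}, $\cW(\P^n_\Fun)$ corresponds to $\un$), every point $\fp_I$ is the generic point of the irreducible closed subset it determines, and an order-preserving bijection of a finite poset whose maximal elements (or, in the opposite convention, minimal elements) are permuted in a prescribed way, and which respects the lattice structure $\fp_I \mapsto$ (subset $I$), is uniquely the induced map on the Boolean lattice of subsets. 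Since $W$ acts on $\P^n_\Fun$ through its action on $\cW$ and this is compatible with base change to $\SP^n_{\F_q}$, the induced map on points must be the lattice automorphism extending the permutation of $\un$, which is order-preserving; this is precisely the claim invoked in the text just before the proposition, so strictly speaking I may cite it. Granting that, everything else is the formal functoriality of $\Delta(-)$ sketched above, and the proof is complete. \qed
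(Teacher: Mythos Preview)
The paper does not supply a proof of this proposition; it is stated without argument, and the introduction to Part~II explicitly flags the results of this section as ``not yet published.'' So there is nothing in the paper to compare your argument against directly.

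Your overall strategy is sound and is surely what the author has in mind: the action of $W$ on $\cW(\P^n_\Fun)$ extends (as the paper already asserts just before the proposition) to an action on the underlying topological space of $\P^n_\Fun$, i.e.\ to a poset automorphism of $\P^n_\Fun\cong\Delta^n$; and $\Delta(-)$ is visibly functorial for poset maps, so post-composition gives an action on $\Delta(\P^n_\Fun)$. That part is fine.

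Two corrections. First, you write ``The Weyl group $W=S_{n+1}$ acts on $\un$''. This is only the case for $\cG$ of type $A_n$; in the generality of the proposition $\cG$ is an arbitrary Chevalley group with a representation on $\P^n_\Fun$, and $\theta^\rk$ only furnishes a homomorphism $W\to S_{n+1}$. Your argument does not actually use $W=S_{n+1}$, so this is a slip rather than a gap, but you should rewrite the relevant sentence. Second, your uniqueness argument is incomplete as stated. You claim that ``the $0$-simplices generate $\Delta(\P^n_\Fun)$ under the poset/face structure'' and hence any face-compatible action is determined on higher simplices. But an $m$-simplex $\delta:\Delta^m\to\P^n_\Fun$ records the values of $\delta$ on \emph{all} nonempty subsets of $\{0,\dots,m\}$, while its $0$-faces only see the values on singletons; so a priori different $m$-simplices can share the same $0$-faces, and a merely simplicial action is not pinned down by its effect on $0$-simplices alone. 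The clean fix is to argue uniqueness at the level of the poset action on $\P^n_\Fun$ (which you already have: an automorphism of the Boolean lattice is determined by its effect on atoms), and then \emph{define} the extension to $\Delta(\P^n_\Fun)$ via functoriality; uniqueness then refers to the unique such functorial extension, which is almost certainly the intended reading of the proposition.
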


\subsection{The oriflamme construction}
\label{subsection: the oriflamme construction}

A pair of chains $x_0<\dotsb< x_{n-3}< x_{n-1}$ and $x_0< \dotsb< x_{n-3}< x'_{n-1}$ can be considered as the $n$-simplex $\delta: \Delta^n\to X$ that sends a non-zero subset $I$ of $\un$ to $\sup\{x_i\}_{i\in I}$. 

This allows us to carry out the oriflamme construction for type $D_n$ inside a subcomplex of $\Delta(\P^{2n-1})$. Let $G$ be the Tits-Weyl model of a split form of $\SO(2n)$ from Section \ref{subsection: coxeter complexes of the classical types}, which comes together with a representation on $\P^{m-1}_\Fun$ where $m=2n$. For $q$ equal to $1$ or a prime power, let $\delta_q:\Delta^n\to\SP^{m-1}_{\F_q}$ be the simplex that corresponds to the pair of chains $x_0<\dotsb< x_{n-3}< x_{n-1}$ and $x_0< \dotsb< x_{n-3}< x'_{n-1}$ as considered in the oriflamme construction, cf.\ Remark \ref{rem: problem with the oriflamme construction}. 

\begin{thm}
 The orbit $\cC=W.\delta_1$ is isomorphic to the Coxeter complex of type $D_n$, and the action of $W$ on $\cC$ coincides with the usual action on the Coxeter complex. The orbit $\cB_q=G(\F_q).\delta_q$ is isomorphic to the spherical building of type $D_n$, and the action of $G(\F_q)$ on $\cB_q$ coincides with the usual action on the spherical building.
\end{thm}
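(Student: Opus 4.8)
The plan is to reduce the statement to the three already-established facts: the description of the Coxeter complex of type $D_n$ via the oriflamme construction (e.g.\ Chapter 11 of \cite{Garrett97}), the description of the spherical building $\cB_q$ of type $D_n$ via pairs of maximal isotropic flags of the form $0\subsetneq V_0\subsetneq\dotsb\subsetneq V_{n-3}\subsetneq V_{n-1}$ and $0\subsetneq V_0\subsetneq\dotsb\subsetneq V_{n-3}\subsetneq V'_{n-1}$, and the matching (from Remark \ref{rem: problem with the oriflamme construction} and Section \ref{subsection: coxeter complexes of the classical types}) between such flags and the simplices $\delta_q:\Delta^n\to\SP^{m-1}_{\F_q}$ sitting inside $\Delta(\SP^{m-1}_{\F_q})$. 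The global shape of the argument is the same as for types $B_n$ and $C_n$: one identifies an explicit fundamental simplex, shows it is fixed by the correct stabilizer, and then invokes transitivity of the group on the building (resp.\ of the Weyl group on the Coxeter complex) to identify the orbit with the whole building.

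First I would treat the Coxeter complex. By Section \ref{subsection: coxeter complexes of the classical types}, the representation of $G$ on $\P^{m-1}_\Fun$ comes from an action of $\cG$ on $\SA^m_\Z$ preserving a split quadratic form of rank $2n$, and the maximal isotropic flag $0\subsetneq V_0\subsetneq\dotsb\subsetneq V_{n-3}\subsetneq V_{n-1}$ together with the "twin" $V'_{n-1}$ can be chosen defined over $\Fun$; this produces the points $x_{l,1}$ and $x'_{n-1,1}$ in $\P^{m-1}_\Fun$ and hence the simplex $\delta_1:\Delta^n\to\P^{m-1}_\Fun$. Applying the Weyl extension functor $\cW$ to the representation (which exists and is compatible with the group law by the Theorem after Section \ref{subsection: The Tits category and the Weyl extension}) yields an action of $W=\cW(G)$ on $\cW(\P^{m-1}_\Fun)=\un$, and by the Proposition at the end of Section \ref{subsection: An extended incidence geometry} this extends uniquely to an action of $W$ on $\Delta(\P^{m-1}_\Fun)$. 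Now $W$ acts on $\delta_1$ by permuting the entries of the defining flag data exactly as it acts on flags of the standard oriflamme configuration; so the orbit $\cC=W.\delta_1$ is $W$-equivariantly isomorphic to the oriflamme description of the Coxeter complex of type $D_n$, and the action of $W$ on $\cC$ is the usual one.

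Next I would handle the building. Taking $\F_q$-rational points of the representation $G\times\P^{m-1}_\Fun\to\P^{m-1}_\Fun$ gives an action of $\cG(\F_q)$ on $\tilde\Delta(\SP^{m-1}_{\F_q})\subseteq\Delta(\SP^{m-1}_{\F_q})$; each linear subvariety of $\SP^{m-1}_{\F_q}$ corresponds to a linear subspace of $\F_q^m$, so a simplex $\delta_q$ as above corresponds exactly to a pair of maximal isotropic flags agreeing up to codimension one, i.e.\ to a chamber of the oriflamme complex. The base-changed points $x_{l,q}$ and $x'_{n-1,q}$ are the generic points of $\overline{x_l}\times_\Z\F_q$ and $\overline{x'_{n-1}}\times_\Z\F_q$, so $\delta_q$ is a well-defined chamber. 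By standard theory the group of $\F_q$-points of a split $\SO(2n)$ acts transitively on the chambers of the oriflamme building (this is the thick spherical building of type $D_n$ by the Bruhat–Tits / Borel–Tits classification), so $\cB_q=G(\F_q).\delta_q$ is isomorphic, as a complex with group action, to the spherical building of type $D_n$, and the restricted action is the usual one.

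The main obstacle I expect is \emph{not} the group theory — transitivity on chambers and the identification of the oriflamme complex with the $D_n$ building are classical — but the bookkeeping needed to certify that $\delta_q$ genuinely lands in $\Delta(\SP^{m-1}_{\F_q})$ as an \emph{order-preserving} map $\Delta^n\to\SP^{m-1}_{\F_q}$, i.e.\ that the pair of chains $x_0<\dotsb<x_{n-3}<x_{n-1}$ and $x_0<\dotsb<x_{n-3}<x'_{n-1}$ assembles into a single simplex $\delta:\Delta^n\to X$ sending $I\mapsto \sup\{x_i\}_{i\in I}$ in a way that is compatible with $G(\F_q)$-translation. In particular one must check that the supremum exists in the poset $\SP^{m-1}_{\F_q}$ for every subset $I\subseteq\un$ that one needs, i.e.\ that the relevant linear subvarieties of $\SP^{m-1}_{\F_q}$ generated by $V_{n-1}$ and $V'_{n-1}$ behave as the oriflamme incidence pattern demands; this is exactly the content of Section \ref{subsection: the oriflamme construction} and must be spelled out carefully over $\Fun$ as well as over $\F_q$ so that base change $\SP^{m-1}_{\F_q}\to\P^{m-1}_\Fun$ intertwines the two simplices $\delta_q$ and $\delta_1$. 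Once that compatibility is in place, the orbit identifications follow formally from transitivity, just as in the $B_n$ and $C_n$ cases.
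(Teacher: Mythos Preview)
The paper does not actually prove this theorem: Section~\ref{section: buildings} is announced in the introduction as containing results that are ``not yet published,'' and the theorem is simply stated after the description of $\delta_q$, with no argument beyond the implicit parallel to the $A_n$, $B_n$, $C_n$ cases (where the only indicated input is transitivity of $G(\F_q)$ on $\cB_q$). Your outline is therefore not competing against a written proof, but it is entirely in the spirit of what the paper sketches: identify $\delta_1$ and $\delta_q$ with chambers of the oriflamme complex, and then invoke transitivity of $W$ (resp.\ $\cG(\F_q)$) on chambers to obtain the full Coxeter complex (resp.\ building). That is exactly the mechanism behind the $A_n$ case in the paper, and your extension to $D_n$ via the oriflamme description is the natural one.

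One point deserves more care than you give it. You assert that the orbit map gives an isomorphism of \emph{simplicial complexes}, not merely a bijection on chambers. Transitivity on chambers alone does not give this: you also need that the stabilizer of $\delta_1$ (resp.\ $\delta_q$) in $W$ (resp.\ $\cG(\F_q)$) is exactly the stabilizer of the corresponding chamber in the abstract Coxeter complex (resp.\ building), and that face relations are preserved. For the building this is the standard identification of parabolic subgroups containing a fixed Borel with faces of a chamber; for the Coxeter complex it is the identification of simplices with cosets of standard parabolic subgroups of $W$ (the paper alludes to this principle just before the theorem in Section~\ref{subsection: coxeter complexes of exceptional type}). You should state this explicitly rather than leave it inside ``follows formally from transitivity.'' The bookkeeping you flag about $\delta_q$ being a genuine order-preserving map $\Delta^n\to\SP^{m-1}_{\F_q}$ is real but routine once one observes that $\sup\{x_{n-1},x'_{n-1}\}$ is the generic point of the linear span $V_{n-1}+V'_{n-1}$, which exists because the relevant subvarieties are linear.
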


\subsection{Coxeter complexes of exceptional type}
\label{subsection: coxeter complexes of exceptional type}

Coxeter complexes of exceptional and classical type appear naturally as subcomplexes of the adjoint representation. Let $\cG$ be a Chevalley group of adjoint type $\tau$, of rank $r$ and of dimension $n=r+2m$. Let $\fg$ be its Lie algebra. Then a choice of fundamental roots $\Pi$ in a root system $\Phi$ for $\fg$ yields a Chevalley basis $\Psi=\Pi\cup\Phi$ for $\fg$. This determines a Tits-Weyl model $G$ of $\cG$ as explained in Section 4.4 of \cite{blueprints2}. Define
\[
 \P[\Psi] \quad = \quad \Proj \bigl( \bpquot{\Fun[X_\alpha,Y_\beta]_{\alpha,\beta\in\Pi}}\cR \bigr)
\]
where $\cR$ is generated by the relations $\sum Y_{\alpha_i}\=\sum Y_{\beta_j}$ whenever we have a relation $\sum \alpha_i=\sum \beta_j$ for roots $\alpha_i$ and $\beta_j$. Then $\P[\Psi]$ is an $\Fun$-model of the projective space $\SP[\fg]\simeq\SP^{n-1}_\Z$ that contains ``a few points more'' than $\P^{n-1}_\Fun$.

\begin{lemma}
 The adjoint representation of $\cG$ on $\SP[\fg]$ descends to a representation of $G$ on $\P[\Psi]$. The associated action of the Weyl group $W=\cW(G)$ on $\cW(\P[\Psi])$ extends uniquely to an action of $W$ on $\Delta(\P[\Psi])$.
\end{lemma}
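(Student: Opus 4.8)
The plan is to prove the two assertions separately, following the template of the representations on $\P^n_\Fun$ in Section \ref{subsection: coxeter complexes of the classical types} and of the Proposition in Section \ref{subsection: An extended incidence geometry}. First I would record the preliminaries that put $\P[\Psi]$ into the Tits category: the graded blueprint $\bpquot{\Fun[X_\alpha,Y_\beta]}{\cR}$ (with $\alpha$ ranging over $\Pi$ and $\beta$ over $\Phi$) is connected and cancellative, $\P[\Psi]$ satisfies Hypothesis (H) so that its rank space $\P[\Psi]^\rk$, its Weyl extension $\cW(\P[\Psi])$ and its base extension $\P[\Psi]^+$ are all defined, and $\P[\Psi]^+_\Z\simeq\SP[\fg]\simeq\SP^{n-1}_\Z$ by construction. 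Verifying (H) already requires understanding the homogeneous prime spectrum of the blueprint --- which proper subsets of the coordinates $\{X_\alpha\}\cup\{Y_\beta\}$ remain prime modulo $\cR$, and what the residue fields of the minimal ones are; this is the combinatorial heart shared by both parts, and I would extract it from the explicit description of the adjoint Tits-Weyl model in Section 4.4 of \cite{blueprints2}.

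For the first assertion I would build the Tits morphism $\theta\colon G\times\P[\Psi]\to\P[\Psi]$ from its two components and then check the axioms. The component $\theta^+$ is the descent of the adjoint action $\SP[\fg]\times\cG\to\SP[\fg]$: written in the Chevalley-basis coordinates $X_\alpha$ ($\alpha\in\Pi$), $Y_\beta$ ($\beta\in\Phi$), the action of a maximal torus is diagonal and the action of the root subgroups has monomial support dictated exactly by the additive relations $\sum\alpha_i=\sum\beta_j$ among roots, which is what $\cR$ encodes; hence the coaction respects $\cR$ and descends to a morphism of semiring schemes $\theta^+\colon G^+\times\P[\Psi]^+\to\P[\Psi]^+$ with $\theta^+_\Z$ the adjoint action. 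This is precisely the computation carried out in Section 4.4 of \cite{blueprints2} to construct $G$, so I would cite it rather than redo it. The component $\theta^\rk\colon G^\rk\times\P[\Psi]^\rk\to\P[\Psi]^\rk$ is then forced as the map on rank spaces that is $\rho$-compatible with $\theta^+_\Z$, and on underlying sets it is the action of $W=\cW(G)$ on the finite set $\cW(\P[\Psi])$ read off from the $W$-action on $\fg$ through the Chevalley basis. That $\theta=(\theta^\rk,\theta^+)$ is a Tits morphism and a representation then follows formally: the compatibility square and the associativity and unit diagrams commute because they do so after the two base extensions $(\blanc)^+_\Z$ and $(\blanc)^\rk$ --- where they are the known identities for the adjoint action on $\SP[\fg]$ and for the $W$-action on $\fg$ --- and these are jointly conservative on the cancellative blue schemes involved, while \cite[Thm.\ 3.8]{blueprints2} handles the bookkeeping for $G\times G\times\P[\Psi]$ and for the unit.

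For the second assertion, the vertex set of $\Delta(\P[\Psi])$ is the poset $\P[\Psi]$ itself and a simplicial automorphism of $\Delta(\P[\Psi])$ is the same as an order-automorphism of that poset; so it suffices to extend the permutation of $\cW(\P[\Psi])$ induced by $\theta^\rk$ for each $w\in W$ to an order-automorphism of $\P[\Psi]$, and to show the extension is unique. Uniqueness is the easy half: as for $\P^n_\Fun$, every homogeneous prime of $\bpquot{\Fun[X_\alpha,Y_\beta]}{\cR}$ is the intersection of the maximal homogeneous primes above it, i.e.\ every point of $\P[\Psi]$ equals the supremum of the elements of $\cW(\P[\Psi])$ below it, so an order-automorphism is pinned down by its restriction to $\cW(\P[\Psi])$; postcomposition with such an automorphism then transports the $W$-action to order-preserving maps $\Delta^n\to\P[\Psi]$ and restricts to $\theta^\rk$ on the $0$-simplices at minimal points. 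For existence I would use that $W$ permutes $\Phi$ and preserves the family of relations $\sum\alpha_i=\sum\beta_j$ generating $\cR$, and combine this with the description of the $W$-action on the $X_\alpha$-directions supplied by the adjoint Tits-Weyl model $G$ to produce the required order-preserving self-bijection of $\P[\Psi]$. The main obstacle is exactly this Cartan part: on the coordinates $X_\alpha$ ($\alpha\in\Pi$) the Weyl group acts linearly rather than by permuting a basis, so neither the descent of the adjoint coaction to $\cR$ nor the extension of the $W$-action to the $X_\alpha$-strata of $\P[\Psi]$ is purely formal; both rest on the explicit analysis of the adjoint representation in Section 4.4 of \cite{blueprints2}, and the genuinely new content of the Lemma is the verification that that analysis is compatible with the pre-addition $\cR$ and survives passage to the simplicial complex $\Delta$, all remaining compatibilities being formal consequences of the functoriality established in Sections \ref{subsection: the rank space}--\ref{subsection: An extended incidence geometry}.
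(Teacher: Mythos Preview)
The paper does not contain a proof of this Lemma. Section \ref{section: buildings} is part of the ``preview'' material: the introduction to Part~\ref{partII} says explicitly that ``the results of Sections \ref{section: buildings} and \ref{section: euler cahracteristics and f1-rational points} are not yet published'', and in the text the Lemma is simply stated and then immediately followed by the sentence ``The characterization of simplices as cosets of parabolic subgroups of the Coxeter group leads to the following result.'' There is therefore no argument in the paper to compare your proposal against.

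That said, your outline is a reasonable plan and correctly isolates the one genuinely nontrivial point. The shape of the argument for the first assertion --- construct $\theta^+$ by checking that the adjoint coaction in Chevalley-basis coordinates respects $\cR$, read off $\theta^\rk$, and appeal to \cite[Thm.~3.8]{blueprints2} for the product compatibilities --- is exactly what one would expect. Your identification of the Cartan coordinates $X_\alpha$ as the obstacle is on target: on the root-space coordinates $Y_\beta$ the Weyl group permutes the basis and the relations in $\cR$ involve only the $Y_\beta$, so descent there is straightforward; on the $X_\alpha$ the $W$-action is the reflection representation, which is not monomial in the basis $\Pi$, and it is precisely the extra points of $\P[\Psi]$ beyond $\P^{n-1}_\Fun$ (the ``few points more'' the paper alludes to) that have to absorb this. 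Your deferral of this to Section~4.4 of \cite{blueprints2} is appropriate at the level of a sketch, but be aware that this is where the actual work lies and that the cited section treats the Tits-Weyl model $G$ rather than the target $\P[\Psi]$, so some adaptation is needed. For the second assertion, your uniqueness argument via ``every point is the supremum of the closed points below it'' is the right idea, but you should verify that this property, obvious for $\P^n_\Fun$, survives the passage to the quotient by $\cR$; the extra points of $\P[\Psi]$ sit precisely over the Cartan directions and their specialisation relations to $\cW(\P[\Psi])$ need to be checked.
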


The characterization of simplices as cosets of parabolic subgroups of the Coxeter group leads to the following result.

\begin{thm}
 There is an $(r-1)$-simplex $\delta_1$ in $\P[\Psi]$ such that the orbit $W.{\delta_1}$ in $\Delta(\P[\Psi])$ under the Weyl group $W=\cW(G)$ is a Coxeter complex of type $\tau$. The action of $W$ on $W.\delta_1$ coincides with the usual action on the Coxeter complex.
\end{thm}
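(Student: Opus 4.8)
The plan is to reduce the statement to the description of a Coxeter complex as the complex of proper standard cosets of parabolic subgroups, and to produce the required fundamental chamber from the adjoint representation.

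\textbf{Reduction to a stabiliser computation.} By the preceding lemma, $W=\cW(G)$ acts on $\Delta(\P[\Psi])$ by simplicial automorphisms, extending the action on $\cW(\P[\Psi])$; the induced action of $W$ on the points of $\P[\Psi]$ is, on the coordinate subvarieties of $\SP[\fg]$, the one coming from $\operatorname{Ad}$ of a representative in $\Norm_\cG(\cT)(\Z)$, i.e.\ the rule $[\fg_\alpha]\mapsto[\fg_{w\alpha}]$. On the other hand, the Coxeter complex $\Sigma=\Sigma(W,\Pi)$ of type $\tau$ is, as a simplicial complex with $W$-action, the complex of proper standard cosets $wW_J$ ($J\subsetneq\Pi$) with face order reverse to inclusion, $W$ acting by left translation, simply transitively on chambers, and with $\operatorname{Stab}_W(wW_J)=wW_Jw^{-1}$. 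Hence it suffices to exhibit an $(r-1)$-simplex $\delta_1\colon\Delta^{r-1}\to\P[\Psi]$ together with a bijection $l\mapsto s_l$ of $\{0,\dots,r-1\}$ with $\Pi$ such that the vertices are $\delta_1(\{l\})=y_{s_l}$ with $\operatorname{Stab}_W(y_{s_l})=W_{\Pi\setminus\{s_l\}}$, a maximal parabolic. Granting this, the stabiliser of the face of $\delta_1$ on the vertices $\{y_{s_l}\}_{l\in T}$ is $\bigcap_{l\in T}W_{\Pi\setminus\{s_l\}}=W_{\Pi\setminus\{s_l:l\in T\}}$; these run over all proper standard parabolics, $\operatorname{Stab}_W(\delta_1)=W_\emptyset=1$, and $wW_{\Pi\setminus\{s_l:l\in T\}}\mapsto w.(\delta_1|_T)$ is a well-defined $W$-equivariant simplicial isomorphism $\Sigma\xrightarrow{\ \sim\ }W.\delta_1$ (injectivity uses that the $y_{s_l}$ lie in pairwise distinct $W$-orbits, so that a type-preserving automorphism fixing a face of $W.\delta_1$ fixes each of its vertices); in particular $W.\delta_1$ is thin and gallery-connected and is a Coxeter complex of type $\tau$ with the usual action, as asserted.

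\textbf{The vertices.} For $s\in\Pi$ let $\cP_{\Pi\setminus\{s\}}$ be the maximal standard parabolic of $\cG$ attached to $\Pi\setminus\{s\}$ and let $M_s\subseteq\fg$ be the $\cP_{\Pi\setminus\{s\}}$-submodule generated by the highest root space $\fg_\theta$. Since $\cT\subseteq\cP_{\Pi\setminus\{s\}}$, the subspace $M_s$ is a sum of weight spaces; since $\cP_{\Pi\setminus\{s\}}$ preserves the grading of $\fg$ by the coefficient of $s$ in a root (fixing it on the Levi part, raising it on the nilradical) while $\fg_\theta$ sits in positive degree — the highest root involves every simple root with positive coefficient — the module $M_s$ is contained in the sum of the strictly positive graded pieces, hence contains no zero-weight vector and is of the form $M_s=\bigoplus_{\alpha\in S_s}\fg_\alpha$. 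Thus the linear subvariety $L_s\subseteq\SP[\fg]$ obtained by projectivising $M_s$ is a coordinate subvariety in the $Y$-coordinates, defined over $\Fun$; using the explicit form of the relations $\cR$ one checks that the homogeneous prime $\fp^{S_s}$ of $\Fun[X_\alpha,Y_\beta]/\cR$ supported on it is indeed a point $y_s$ of $\P[\Psi]$ with $\overline{\{y_s\}}$ an $\Fun$-model of $L_s$. Fixing an enumeration $\Pi=\{s_0,\dots,s_{r-1}\}$, the points $y_{s_0},\dots,y_{s_{r-1}}$ all lie below the generic point $\eta$ of $\P[\Psi]$, so they span an $(r-1)$-simplex $\delta_1$ with $\delta_1(\{l\})=y_{s_l}$ (one may, for instance, put $\delta_1(T)=\eta$ for $|T|\ge2$, or more canonically take joins); $W$ preserves the subcomplex $W.\delta_1$, since it permutes its vertices.

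\textbf{The stabilisers.} It remains to check $\operatorname{Stab}_W(y_s)=W_{\Pi\setminus\{s\}}$, for which it is enough to show $\operatorname{Stab}_\cG(M_s)=\cP_{\Pi\setminus\{s\}}$: the induced $W$-action on the coordinate subvarieties being that of $\operatorname{Ad}$ of $\Norm_\cG(\cT)$, one then gets $\operatorname{Stab}_W(y_s)=(\operatorname{Stab}_\cG(M_s)\cap\Norm_\cG(\cT))/\Cent_\cG(\cT)=W_{\Pi\setminus\{s\}}$. Now $\operatorname{Stab}_\cG(M_s)$ contains $\cP_{\Pi\setminus\{s\}}$, a fortiori a Borel, hence is parabolic; as $\cP_{\Pi\setminus\{s\}}$ is maximal among proper parabolics, $\operatorname{Stab}_\cG(M_s)$ equals either $\cP_{\Pi\setminus\{s\}}$ or $\cG$. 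It cannot be $\cG$: by the grading argument $M_s$ is a proper nonzero subspace of the simple $\cG$-module $\fg$. Hence $\operatorname{Stab}_\cG(M_s)=\cP_{\Pi\setminus\{s\}}$, which by the reduction step finishes the proof.

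\textbf{Main obstacle.} The substance is the construction step, performed uniformly over all types $\tau$ and in particular the exceptional ones: one must verify that $L_s$ genuinely descends to a point of $\P[\Psi]$ — i.e.\ that the pre-addition $\cR$ neither destroys the primality of $\fp^{S_s}$ nor enlarges the closure of $y_s$ beyond $L_s$ — and that the $W$-action on $\Delta(\P[\Psi])$ is faithful enough on types that stabilising a face of $\delta_1$ forces fixing its vertices; the latter amounts to the claim that the $y_s$ lie in pairwise distinct $W$-orbits, which needs care precisely when the Coxeter diagram has nontrivial automorphisms and the maximal parabolics $W_{\Pi\setminus\{s\}}$ become conjugate. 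The remaining content is the formal combinatorics of parabolic cosets packaged in the coset description of $\Sigma$.
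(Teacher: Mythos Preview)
Your approach is essentially the one the paper indicates: the text offers only the single sentence ``The characterization of simplices as cosets of parabolic subgroups of the Coxeter group leads to the following result'' as justification (and the results of this section are flagged as not yet published), so you are filling in precisely the argument that is being pointed to. The reduction to the coset model of $\Sigma(W,\Pi)$ and the identification of chambers via the stabiliser computation $\operatorname{Stab}_W(y_s)=W_{\Pi\setminus\{s\}}$ is exactly right, and your choice of vertices---the projectivisations of the $\cP_{\Pi\setminus\{s\}}$-submodules generated by $\fg_\theta$---is the natural one and makes the stabiliser argument via maximality of $\cP_{\Pi\setminus\{s\}}$ go through cleanly.

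Two remarks on the points you flag as obstacles. First, your concern about the $y_s$ lying in pairwise distinct $W$-orbits is legitimate to raise but in fact does not bite for this construction: $M_s$ sits in the top piece of the grading of $\fg$ by the $s$-coefficient, so $S_s$ consists of roots with maximal $s$-height $c_s$; a $w\in W$ sending $S_s$ to $S_{s'}$ would have to send the hyperplane $\{s\text{-coeff}=c_s\}$ in the root lattice to $\{s'\text{-coeff}=c_{s'}\}$, and a short check (e.g.\ by counting, or by observing that $w$ would then conjugate $\cP_{\Pi\setminus\{s\}}$ to $\cP_{\Pi\setminus\{s'\}}$ \emph{while normalising $T$}, forcing $w$ to send the simple system underlying one Levi to that of the other) shows this forces $s=s'$. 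You might want to make this explicit rather than leave it as a caveat. Second, the descent of $L_s$ to a genuine point of $\P[\Psi]$ does need the explicit form of $\cR$: since the relations only involve the $Y_\beta$ and $M_s$ is a coordinate subspace in the $Y$-coordinates, the relevant homogeneous prime is stable under $\cR$, but this should be stated rather than deferred.

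Finally, your definition of $\delta_1$ via $\delta_1(T)=\eta$ for $\lvert T\rvert\ge 2$ is adequate for the purpose (it gives an order-preserving map, and the face-matching computation you sketch goes through because $\eta$ is $W$-fixed), though a more intrinsic choice using the actual joins of the $\overline{\{y_s\}}$ inside $\P[\Psi]$ would track the Coxeter complex more faithfully at the level of underlying points.
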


Presumably, spherical buildings appear as subcomplexes of $\Delta(\SP[\fg]_{\F_q})$. More precisely, I expect the following.

\begin{conj}
 There is an $(r-1)$-simplex $\delta_{q}$ in $\SP[\fg]_{\F_q}$ such that the orbit $G(\F_q).{\delta_{q}}$ in $\Delta(\SP[\fg]_{\F_q})$ is the spherical building of type $\tau$ over $\F_q$. The action of $G(\F_q)$ on $G(\F_q).\delta_q$ coincides with the usual action on the building.
\end{conj}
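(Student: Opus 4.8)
The plan is to $q$-deform the proof of the preceding theorem, replacing the parabolic subgroups $W_J$ of the Coxeter group $W$ by the standard parabolic subgroups $\cP_J$ ($J\subseteq\Pi$) of $\cG$ and exploiting the Tits system on $\cG(\F_q)$. Recall that the spherical building $\cB_q$ of type $\tau$ over $\F_q$ is, up to canonical isomorphism, the coset complex of the standard parabolic subgroups $\cP_J(\F_q)$ of $\cG(\F_q)$, ordered by reverse inclusion, on which $\cG(\F_q)$ acts simplicially and transitively on the chambers — the cosets of the Borel $\cP_\emptyset(\F_q)$ — with chamber stabiliser a Borel subgroup.

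The first step is to attach to each maximal standard parabolic $\cP_{\Pi\setminus\{i\}}$ a linear subvariety of $\SP[\fg]$. Since the Chevalley basis $\Psi$ is adapted to the root-space decomposition, a suitable $\cP_{\Pi\setminus\{i\}}$-stable subspace $V_i\subseteq\fg$ — for instance $\operatorname{Lie}(\cP_{\Pi\setminus\{i\}})$, or the nilradical of the opposite parabolic — is spanned by a subset of $\Psi$; hence its projectivisation $\SP[V_i]\subseteq\SP[\fg]$ is defined over $\Z$ and its generic point is one of the ``few points more'' that the model $\P[\Psi]$ was designed to retain. Forming all sups of these $r$ generic points inside the poset underlying $\SP[\fg]$ yields an order-preserving map $\Delta^{r-1}\to\P[\Psi]$, which is the simplex $\delta_1$ of the preceding theorem; its base change $\delta_q\colon\Delta^{r-1}\to\SP[\fg]_{\F_q}$ is the candidate chamber of the conjecture.

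The core of the argument is an orbit–stabiliser computation inside $\Delta(\SP[\fg]_{\F_q})$: one must show that the stabiliser of $\delta_q$ under the adjoint action of $\cG(\F_q)$ equals $\cP_\emptyset(\F_q)$. That $\cP_\emptyset$ preserves each $\SP[V_i]$ is clear; for the converse one reduces, through the vertices of $\delta_q$, to the facts that the stabiliser of $V_i$ in $\cG$ is exactly $\cP_{\Pi\setminus\{i\}}$ and that $\bigcap_i\cP_{\Pi\setminus\{i\}}=\cP_\emptyset$, together with smoothness of the relevant stabiliser schemes, so that passing to $\F_q$-points commutes with forming stabilisers. Granting this, the orbit map gives a $\cG(\F_q)$-equivariant bijection between the chambers of $\cB_q$ and the chambers of $\cG(\F_q).\delta_q$; applying the same reasoning to each $\cP_J$ — whose associated face of $\delta_q$ turns out to have stabiliser $\cP_J(\F_q)$ — identifies the whole face poset of $\cG(\F_q).\delta_q$ with the coset complex of parabolics, type-preservingly, and hence produces a $\cG(\F_q)$-equivariant simplicial isomorphism $\cB_q\xrightarrow{\ \sim\ }\cG(\F_q).\delta_q$. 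Since this isomorphism intertwines the usual action on $\cB_q$ with the action coming from the representation of $G$ on $\P[\Psi]$, the conjecture follows.

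The step I expect to be the main obstacle is this stabiliser computation, \emph{uniformly} in $\tau$ and $q$. In characteristic zero, and when $\operatorname{char}\F_q$ is good for $\tau$, the required faithfulness of the adjoint representation on the relevant subvarieties of $\SP[\fg]$ is classical — it is essentially the statement that the normaliser of a parabolic subalgebra is the parabolic subgroup — and it also underlies the classical-type theorems proved above, where, however, a smaller standard representation is available. In small characteristic relative to $\tau$, by contrast, $\fg$ may acquire a nonzero centre or fail to be simple (for example $\mathfrak{sl}_n$ when $p\mid n$, or the exceptional degeneracies at $p\in\{2,3\}$), the adjoint representation need not be irreducible, and the subvarieties $\SP[V_i]$ may collapse, so that $\delta_q$ could fail to be a genuine $(r-1)$-simplex. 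Handling these cases will require either a coprimality hypothesis on $q$ or a direct computation with the root subgroups $U_\alpha(\F_q)$, which generate $\cG(\F_q)$, determining exactly which of them fix $\delta_q$; supplying such an argument in full generality is the ingredient that is still missing, and is why the statement is at present only conjectural.
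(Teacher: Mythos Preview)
The statement you are addressing is presented in the paper as a \emph{conjecture}; the paper offers no proof and explicitly flags it as an expectation (``Presumably, spherical buildings appear\dots More precisely, I expect the following''). So there is no proof in the paper to compare against, and your task here is really to assess whether your outline could be turned into a proof and what is missing.

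Your plan is exactly the natural $q$-deformation of the argument the paper sketches for the preceding theorem on Coxeter complexes (``characterization of simplices as cosets of parabolic subgroups''): replace the standard parabolics $W_J$ of $W$ by the standard parabolics $\cP_J$ of $\cG$, realise the face associated with $J$ as a linear subvariety of $\SP[\fg]$ spanned by a subset of the Chevalley basis, and then run orbit--stabiliser. This is the right shape of argument, and you have correctly located the genuine obstruction: in bad characteristic the adjoint representation of an adjoint group can fail to be faithful or irreducible, the parabolic subalgebras can have the wrong normalisers, and the candidate subspaces $V_i$ can degenerate so that $\delta_q$ is not even an $(r-1)$-simplex. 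This is precisely why the author leaves the statement conjectural, and your closing paragraph identifies the same difficulty.

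Two smaller points are worth tightening before you could claim even the good-characteristic case. First, be explicit about which $V_i$ you take: the normaliser of $\mathrm{Lie}(\cP_{\Pi\setminus\{i\}})$ is $\cP_{\Pi\setminus\{i\}}$ in good characteristic, but the ``nilradical of the opposite parabolic'' has a different stabiliser in general, so the two alternatives you offer are not interchangeable. Second, you assert that the generic point of $\SP[V_i]$ is one of the ``few extra points'' of $\P[\Psi]$; this depends on $V_i$ being cut out by relations already present in the pre-addition $\cR$ defining $\P[\Psi]$, which in the paper's construction encodes only the \emph{linear} relations among roots. You should check that your chosen $V_i$ is actually visible at the level of the $\Fun$-model, not merely after base change to $\F_q$, or else the simplex $\delta_1$ from the preceding theorem need not base-change to your $\delta_q$ in the way you claim.
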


%%%%%%%%%%%%%%%%%%%%%%%%%%%%%%%%%%%%%%%%%%%%%%%%%%%%%%%%%%%%%%%%%%%%%%%%%%%%%%%%%%%%%%%%%%%%%%%%%%%%%%%%%%%%%%%%%%%%%%%%%%%%%%%%%%%%%%%%%%%%%%%%%%%%%%%%%%%%%%%%%%%%%%%%%%%%%%%%%%%%%%%%%%%%%%%%%%%%%%%%%%%%%%%%%%%%%%%%%%%%
%%%%%%%%%%%%%%%%%%%%%%%%%%%%%%%%%%%%%%%%%%%%%%%%%%%%%%%%%%%%%%%%%%%%%%%%%%%%%%%%%%%%%%%%%%%%%%%%%%%%%%%%%%%%%%%%%%%%%%%%%%%%%%%%%%%%%%%%%%%%%%%%%%%%%%%%%%%%%%%%%%%%%%%%%%%%%%%%%%%%%%%%%%%%%%%%%%%%%%%%%%%%%%%%%%%%%%%%%%%%

\section{Euler characteristics and $\Fun$-rational points}
\label{section: euler cahracteristics and f1-rational points}

One aspect of the philosophy of $\Fun$-geometry is that the number of $\Fun$-rational points of an $\Fun$-scheme $X_\Fun$ of finite type should equal the Euler characteristic
\[
 \chi(X_\C) \quad = \quad \sum_{i=0}^{2\dim X} \ (-1)^i\, b_i
\]
where $b_i=\dim H^i(X_\C,\C)$ are the Betti numbers of the complex scheme $X_\C$. This idea is deduced from the following thought.

Let $X$ be a smooth projective scheme such that there is a polynomial $N(q) \in \Z[q]$ that counts $\F_q$-rational points, i.e.\ $\# X(\F_q)=N(q)$ for every prime power $q$. As a consequence of the comparison theorem for singular and $l$-adic cohomology and Deligne's proof of the Weil conjectures, we know that the counting polynomial is of the form
\[
 N(q)\quad = \quad \sum_{i=0}^n b_{2i}\ q^i 
\]
and that $b_i=0$ if $i$ is odd (cf.\ \cite{Kurokawa05} and \cite{Deitmar07}). Thus $\chi=\sum_{i=0}^n b_{2i}$ is the Euler characteristic of $X_\C$ in this case (cf.\ \cite{Soule04}). This equals $N(1)$, which has the interpretation as the number of $\Fun$-rational points of an $\Fun$-model $X_\Fun$ of $X$.

The hope is that a rigorous theory of $\Fun$-models of schemes together with the result that the number of its $\Fun$-rational points equals the Euler characteristic of the complex scheme will give access to computational methods and theoretical results.

There are certain questions that one faces at this point. First of all, there are varieties with a negative Euler characteristic, which cannot be the cardinality of a set. Therefore, we can only hope for the relation $\chi(X_\C)=\# X(\Fun)$ for schemes with positive Euler characteristic. For other varieties one might hope for results if one replaces the number $\# X(\Fun)$ by a weighted sum $\sum n_p$ over all $\Fun$-rational points $p$ where $n_p$ can also be negative.

Another problem is that in general, it is not clear which should be the right $\Fun$-model of a scheme $X$ in order to count its $\Fun$-rational points. It is further not clear what the right notion of an $\Fun$-rational point should be. The direct generalization as the number of homomorphism $\Spec\Fun\to X$ would produce the wrong outcome, e.g.\ $\Hom(\Spec\Fun,\P^n_\Fun)$ has $2^{n+1}-1$ elements, instead of $N(1)=n+1$ where $N(q)=q^n+\dotsc+q^0$ is the counting polynomial of $\SP^n_\Z$.

We will give a partial answer to these questions in the following.

\subsection{Quiver Grassmannians}
\label{subsection: quiver grassmannians}

Caldero and Chapoton describe in \cite{Caldero-Chapoton06} a formula that expresses cluster variables as elementary functions in the Euler characteristics of quiver Grassmannians. This approach to cluster algebras (see Fomin and Zelevinsky's paper \cite{Fomin-Zelevinsky02}) and canonical bases (see Lusztig's book \cite{Lusztig93}) caused an active interest in the Euler characteristics of quiver Grassmannians in recent years.

There is a mutual interest between quiver Grassmannians and $\Fun$-geometry. On the one side, $\Fun$-geometry searches for structure that rigidifies schemes in order to descend them to $\Fun$. It turns out that quiver Grassmannians come with such a rigidifying structure. On the other side, a successful interpretation of the Euler characteristic of a complex scheme in terms of its $\Fun$-rational points opens access to new methods to compute Euler characteristics.

Note that Reineke proves in \cite{Reineke12} that every projective scheme can be realized as a quiver Grassmannian. Therefore quiver Grassmannians are suited as a device to define $\Fun$-models of projective schemes. We will explain how this works in the following.

A \emph{quiver}\index{Quiver} is a finite directed graph $Q$. We denote the vertex set by $Q_0$, the set of arrows by $Q_1$, the source map by $s:Q_1\to Q_0$ and the target map by $t:Q_1\to Q_0$. A \emph{(complex) representation of $Q$}\index{Quiver representation} is a collection of complex vector spaces $M_i$ for each vertex $i\in Q_0$ together with a collection of $\C$-linear maps $M_\alpha:M_i\to M_j$ for each arrow $\alpha:i\to j$ in $Q_1$. We write $M=\bigl((M_i),(M_\alpha)\bigr)$ for a representation of $Q$. The \emph{dimension vector $\udim M$ of $M$}\index{Dimension vector} is the tuple $\ud=(d_i)$ where $d_i$ is the dimension of $M_i$. A \emph{subrepresentation of $M$}\index{Quiver representation!subrepresentation} is a collection of $\C$-linear subspaces $N_i$ of $M_i$ for each vertex $i\in Q_0$ such that the linear maps $M_\alpha:M_i\to M_j$ restrict to linear maps $N_i\to N_j$ for all arrows $\alpha:i\to j$.

Let $\ue=(e_i)$ be a tuple of non-negative integers with $e_i\leq d_i$ for all $i\in Q_0$. Then there is a complex scheme $\Gr_\ue(M)_\C$ whose set of $\C$-rational points corresponds to the set
\[
 \Gr_\ue(M,\C) \quad = \quad \bigl\{ \ N\subset M \text{ subrepresentation}  \ \bigl| \ \udim N = \ue    \ \bigr\}.
\]
In words, the scheme theoretic definition of $\Gr_\ue(M)$ is as the fibre at $M$ of the universal $Q$-Grassmannian of $\ue$-dimensional subrepresentations that fibres over the moduli space of representations of $Q$. For a precise definition, see Sections 2.2 and 2.3 in \cite{Cerulli-Feigin-Reineke11}.

The choice of a basis $\cB_i$ for each vector space $M_i$ defines a closed embedding
\[
 \Gr_\ue(M)_\C \quad \longrightarrow \quad \prod_{i\in Q_0} \ \Gr(e_i,d_i)_\C
\]
of the quiver Grassmannian into a product of usual Grassmannians. Therefore $\Gr_\ue(M)$ is a projective complex scheme.

\subsection{The naive set of $\Fun$-rational points}
\label{subsection: naive set of f1-rational points}

We apply the philosophy of $\Fun$-geometry that the number of $\Fun$-rational points of a projective scheme $X$ should equal its Euler-characteristic to quiver Grassmannians. This idea was explained to me by Giovanni Cerulli Irelli and Gr\'egoire Dupont. 

We recall the notion of a quiver representation over $\Fun$ from Matt Szczesny's paper \cite{Szczesny12a}. An \emph{$\Fun$-representation of a quiver $Q$}\index{Quiver representation!$\Fun$-representation} is a collection of pointed sets $S_i$ for each $i\in Q_0$ together with base point preserving maps $S_\alpha:S_i\to S_j$ whose fibres $S^{-1}_\alpha(a)$ have at most one element for $a\in S_j-\{\ast\}$.

An $\Fun$-representation $S=\bigl((S_i),(S_\alpha)\bigr)$ defines a complex representation $M$ of $Q$ as follows. Define $M_i$ as the complex vector space with basis $\cB_i=S_i-\{\ast\}$ and identify $\ast$ with $0\in M_i$. Then maps $S_\alpha:S_i\to S_j$ define $\C$-linear maps $M_\alpha:M_i\to M_j$ by linear extension. A matrix is called \emph{monomial}\index{Monomial matrix} if it contains at most one non-zero entry in each row and in each column. Note that in the basis $\cB=\bigcup\cB_i$ the linear maps are described by monomial matrices with coefficients in $\{0,1\}$. We say that a complex representation $M$ of $Q$ together with a basis $\cB=\bigcup\cB_i$ is \emph{defined by $\Fun$-coefficients}\index{Quiver representation!defined by $\Fun$-coefficients} if all linear maps $M_\alpha$ are described by monomial matrices with coefficients in $\{0,1\}$.

Given a complex representation $M$ with basis $\cB=\bigcup\cB_i$, we define the \emph{naive set of $\Fun$-rational points of $\Gr_\ue(M)$}\index{F1-rational points@$\Fun$-rational points} as the subset 
\[
 \Gr_\ue^*(M,\Fun) \quad = \quad \left\{ \ N\in\Gr_\ue(M,\C) \ \left| \begin{array}{c} N_i\text{ is spanned by }\cB_i\cap N\text{ for all }i\in Q_0 \\ \text{and }M_\alpha(\cB_i\cap N)\subset\cB_j \text{ for all }\alpha:i\to j\end{array} \right.\right\}.
\]

Note that this definition depends on the choice of basis $\cB$ and, in general, the cardinality of $\Gr^*_\ue(M,\Fun)$ will not coincide with the Euler characteristic of $\Gr_\ue(M)$. For instance for the representation 
\[
 \xymatrix{M: & \C^2 \ar[r]^{\tinymat 2002 } & \C^2}
\]
of the quiver $1\to2$ and dimension vector $\ue=(1,1)$, the set $\Gr_\ue^*(M,\Fun)$ is empty, while the Euler characteristic of $\Gr_\ue(M,\C)\simeq \P^1(\C)$ is $2$. In the following section, we define a blue scheme $\Gr_\ue(M)_\Fun$ whose Weyl extension $\cW(\Gr_\ue(M)_\Fun)$ is better suited to extract information about the Euler characteristic of $\Gr_\ue(M)$.

\subsection{$\Fun$-models of quiver Grassmannians}
\label{subsection: f1-models of quiver grassmannians}

The Grassmannian $\Gr(e,d)$ of $e$-subspaces in $d$-space is a closed subscheme of $\SP^m_\Z$ with $m=\binom de-1$, which is defined by the Pl\"ucker relations. Let $I$ be the vanishing ideal of $\Gr(e,d)$ in $\SP^m$, then $\cR=\{\sum a_i\=\sum b_j|\sum a_i-\sum b_j\in I\}$ is a pre-addition for the coordinate blueprint $\Fun[x_0,\dotsc,x_m]$ of $\P^m_\Fun$. As explained in Section \ref{subsection: closed subschemes},
\[
 \Gr(e,d)_\Fun \quad = \quad \Proj \ \bigl(\,\bpquot{\Fun[x_0,\dotsc,x_m]}{\cR} \,\bigr)
\]
is an $\Fun$-model of $\Gr(e,d)$ that comes together with a closed embedding $\Gr(e,d)_\Fun\hookrightarrow\SP^m_\Fun$. See Figure 1 of \cite{L12a} for an illustration of $\Gr(2,4)_\Fun$.

Let $Q$ be a quiver, $M$ be a representation of $Q$ with basis $\cB=\bigcup\cB_i$ and $\ue$ a dimension vector for $Q$. Though the following works also for other \emph{blueprints of definition}, we will restrict our explanation for simplicity to the case of models over $\Fun$. Namely, we assume that the $\C$-linear maps $M_\alpha$ correspond to matrices with integral coefficients w.r.t.\ the basis $\cB$. We call such a basis an \emph{integral basis of $M$}\index{Quiver representation!integral basis}. An integral basis yields an integral model $\Gr_\ue(M)$ of the complex quiver Grassmannian $\Gr_\ue(M)_\C$.

Define $\norm\ue=\sum e_i$, $\norm\ud=\sum d_i$ and $m=\binom{\norm\ud}{\norm\ue}-1$. Consider the series of closed embeddings
\[
 \Gr_\ue(M) \quad \longrightarrow \quad \prod_{i\in Q_0} \ \Gr(e_i,d_i) \quad \longrightarrow \quad \Gr(\norm\ue,\norm\ud) \quad \longrightarrow \quad \SP^m_\Z.
\]
As explained in Section \ref{subsection: closed subschemes}, this defines the $\Fun$-model $\Gr_\ue(M)_\Fun$ of $\Gr_\ue(M)$. A paper that explains the following facts is in preparation. See Section 2.1 in \cite{blueprints2} for the definition of the rank for an arbitrary blue scheme.

\begin{lemma}
 Every connected component of $\Gr_\ue(M)_\Fun$ is of rank $0$. 
\end{lemma}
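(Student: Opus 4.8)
The plan is to show that each connected component of $\Gr_\ue(M)_\Fun$ has rank $0$ by analyzing the structure of its points as a closed subscheme of $\P^m_\Fun$, and using the explicit description of the rank as the dimension of the $\Q$-scheme $\overline{x}^+_\Q$ attached to a point $x$ of minimal rank. Recall from Section~\ref{subsection: closed subschemes} that $\Gr_\ue(M)_\Fun$ is $\Proj$ of a graded blueprint $B=\bpquot{\Fun[x_0,\dotsc,x_m]}{\cR}$, hence its points are the relevant homogeneous prime ideals of $\Fun[x_0,\dotsc,x_m]$, and since the monoid $\Fun[x_0,\dotsc,x_m]$ has no nontrivial units, every such prime ideal is of the combinatorial form $\fp^I=(x_i)_{i\in\un-I}$ for $I$ ranging over certain subsets of $\un=\{0,\dotsc,m\}$. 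The key point is that the addition in $\Fun$ is trivial, so that for any point $x$ of $\Gr_\ue(M)_\Fun$ the closure $\overline{x}$, viewed with its reduced closed subscheme structure, is again a monoidal scheme: its coordinate blueprint is of the form $\bpquot{\Fun[\text{monomials}]}{\cR'}$ where $\cR'$ is again generated by relations among the residues of the $x_i$.

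**First I would** argue that the structure sheaf of $\Gr_\ue(M)_\Fun$ has, on each affine chart, a \emph{torsion monoid modulo the pre-addition}: concretely, once we pass to a standard affine chart $D_+(x_j)$, the resulting blueprint is $\bpquot{A}{\cR}$ for a finitely generated monoid $A$ with trivial unit group, and the homogeneous Pl\"ucker relations force each basis vector $x_i/x_j$ either to vanish or to be ``rigid'' — there is no free parameter surviving because the Pl\"ucker relations are binomial with $\{0,\pm1\}$ coefficients and, over $\Fun$, the pre-addition collapses any would-be one-parameter family to a point. Here the hypothesis that $M$ has an \emph{integral basis} is used: the entries of $M_\alpha$ are integers, so the linear conditions cutting out $\Gr_\ue(M)$ inside $\prod\Gr(e_i,d_i)$ translate into additive relations in $\cR$ that involve only integer coefficients, and hence do not introduce any scaling freedom over $\Fun$. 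Consequently, every point $x$ of $\Gr_\ue(M)_\Fun$ has a zero-dimensional base extension $\overline{x}^+_\Q$; i.e.\ $\rk\, x = 0$ for \emph{all} points, so in particular the minimum $r=0$ and every point is a rank-$0$ point.

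**Then I would** conclude: since $\rk\,x=0$ for every $x$, the minimal rank $r$ equals $0$, and a fortiori each connected component $Z$ of $\Gr_\ue(M)_\Fun$ satisfies $\rk Z=\min\{\rk x\mid x\in Z\}=0$. It remains only to check the consistency of this with Hypothesis~(H) in the form required for the rank space — but for the present lemma we only need the numerical statement $\rk Z=0$, which follows directly from the vanishing of $\rk x$ pointwise. (One may note that rank $0$ means the rank space of $\Gr_\ue(M)_\Fun$ is a finite disjoint union of copies of $\Spec\Fun$ and $\Spec\Funsq$, which is precisely why $\cW(\Gr_\ue(M)_\Fun)$ is a good candidate for a set of $\Fun$-rational points.)

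**The hard part will be** making rigorous the claim that the binomial Pl\"ucker relations, after base change to $\Fun$, genuinely kill every positive-dimensional piece — i.e.\ that no point of $\Gr_\ue(M)_\Fun$ has a component $\overline{x}$ whose base extension to $\Q$ is a positive-dimensional torus or more complicated variety. The subtlety is that a priori a reduced irreducible closed subscheme of a blue scheme could base-change to something like $\G_{m,\Z}^k$ with $k>0$; one must verify that the grading (the $\Proj$ construction) plus the specific shape of the Pl\"ucker and $Q$-linear relations forbid this. The cleanest route is probably to observe that the coordinate blueprint of any $\overline{x}$ is generated over $\Fun$ by finitely many monomials subject to relations forcing each such monomial to be a $0$-th power or $0$, so that $\overline{x}^+_\Z$ is a finite $\Z$-scheme; I expect this to reduce to a short direct inspection of how additive relations propagate through localizations (using that additive closures commute with localization, as recalled in Section~\ref{subsection: closed subschemes}), rather than to any deep input.
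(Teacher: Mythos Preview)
First, note that the paper does not actually prove this lemma: the sentence immediately preceding it says that ``a paper that explains the following facts is in preparation,'' and refers the reader to \cite{blueprints2} for the general definition of rank (which may differ from the one given in this text under Hypothesis~(H)). There is thus no proof here to compare against.

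Your proposal, however, contains a genuine error. You assert that $\rk\,x=0$ for \emph{every} point $x\in\Gr_\ue(M)_\Fun$, and this is false. Take the trivial quiver (one vertex, no arrows), $M=\C^d$, $\ue=(e)$ with $0<e<d$: then $\Gr_\ue(M)_\Fun=\Gr(e,d)_\Fun$, and its generic point $\eta$ has closure equal to the whole scheme, so $\overline{\eta}^+_\Q\simeq\Gr(e,d)_\Q$ has dimension $e(d-e)>0$. The lemma only asserts that the \emph{minimum} of $\rk\,x$ over each connected component is $0$. Your heuristic that ``the Pl\"ucker relations force each basis vector $x_i/x_j$ either to vanish or to be rigid'' is simply not correct: already for $\Gr(2,4)_\Fun$ the single relation $x_{12}x_{34}+x_{14}x_{23}\=x_{13}x_{24}$ pins down nothing, and the generic point of the chart $D_+(x_{12})$ has rank $4$. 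A correct argument must exhibit, in each connected component, a specific point of rank $0$ --- for instance by showing that each component contains a closed point of the ambient $\P^m_\Fun$, or by analysing the residue blue field at a closed point of the component and proving that its base extension to $\Q$ is zero-dimensional. Your sketch does neither.
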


\begin{prop}
 The set $\Gr^*_\ue(M,\Fun)$ of naive $\Fun$-rational points of $\Gr_\ue(M)$ is naturally a subset of the Weyl extension $\cW(\Gr_\ue(M)_\Fun)$.
\end{prop}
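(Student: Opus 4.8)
The plan is to realize each naive $\Fun$-rational point as a point of rank $0$ of the blue scheme $\Gr_\ue(M)_\Fun$. By the preceding Lemma every connected component of $\Gr_\ue(M)_\Fun$ has rank $0$, so $\rk\Gr_\ue(M)_\Fun=0$ and the Weyl extension $\cW(\Gr_\ue(M)_\Fun)$ is exactly the set of points of $\Gr_\ue(M)_\Fun$ of rank $0$. It therefore suffices to exhibit a natural injection of $\Gr^\ast_\ue(M,\Fun)$ into this set.

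Let $N\in\Gr^\ast_\ue(M,\Fun)$. Since each $N_i$ is spanned by a subset of the basis $\cB_i$, the image of $N$ under the composite closed embedding
\[
 \Gr_\ue(M)_\C\ \hookrightarrow\ \prod_{i\in Q_0}\Gr(e_i,d_i)_\C\ \hookrightarrow\ \Gr(\norm\ue,\norm\ud)_\C\ \hookrightarrow\ \SP^m_\C
\]
of Section~\ref{subsection: f1-models of quiver grassmannians} is a coordinate subspace at each stage, so its Pl\"ucker coordinates take the values $0$ and $1$ with exactly one equal to $1$ (the minor of the relevant identity submatrix). Hence $N$ is sent to the coordinate point $e_{k(N)}$ of $\SP^m_\C$ with a single nonzero homogeneous coordinate, in slot $k(N)\in\{0,\dotsc,m\}$, and, as $N$ lies on $\Gr_\ue(M)_\C$, so does $e_{k(N)}$. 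Put $x_N:=\fp_I\in\P^m_\Fun$ with $I=\{0,\dotsc,m\}\setminus\{k(N)\}$. Since the displayed embedding is injective on $\C$-points and $k(N)$ is recovered from $x_N$, the map $N\mapsto x_N$ is injective.

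It remains to see that $x_N$ is a point of $\Gr_\ue(M)_\Fun$ and that $\rk x_N=0$. Write $\Gr_\ue(M)_\Fun=\Proj B$ with $B=\bpquot{\Fun[x_0,\dotsc,x_m]}{\cR}$, where $\cR$ consists of all relations $\sum a_\mu\=\sum b_\nu$ with $f:=\sum a_\mu-\sum b_\nu$ in the homogeneous vanishing ideal $\cI$ of $\Gr_\ue(M)$ in $\SP^m_\Z$; by homogeneity of $\cI$ one may reduce to the case that $f$ is homogeneous of some degree. Because $e_{k(N)}\in\Gr_\ue(M)_\C$, each such $f$ vanishes at $e_{k(N)}$, i.e.\ the coefficient of the monomial $x_{k(N)}^{\deg f}$ in $f$ is $0$. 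Two consequences follow. First, $\fp_I=(x_i)_{i\neq k(N)}$ is an ideal of $B$: the only point to check, the additive-closure condition for a relation $\sum a_\mu+c\=\sum b_\nu$ with $a_\mu,b_\nu\in\fp_I$, holds because such a relation forces $c$ to vanish at $e_{k(N)}$, hence $c\in\fp_I$; as the complement of $\fp_I$ in $B$ is the multiplicative set $\{\,x_{k(N)}^n\mid n\geq 0\,\}$, $\fp_I$ is a homogeneous prime of $B$ not containing the irrelevant ideal, so $x_N\in\Proj B=\Gr_\ue(M)_\Fun$. Second, the same vanishing makes every relation of $\cR$ trivial once the monomials lying in $\fp_I$ are deleted, so $B/\fp_I\cong\Fun[x_{k(N)}]$ and the reduced closure of $x_N$ is $\Proj\Fun[x_{k(N)}]\cong\Spec\Fun$; hence $\overline{x_N}{}^+_\Q\cong\Spec\Q$ is $0$-dimensional and $\rk x_N=0$. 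Thus $x_N\in\cW(\Gr_\ue(M)_\Fun)$. Finally, the construction uses only the fixed integral basis $\cB$ already built into the definitions of both sides, and under the identification $\cW(\P^m_\Fun)=\{0,\dotsc,m\}$ the point $x_N$ corresponds to $k(N)$; this is the asserted naturality.

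I expect the crux to be the verification that $x_N$ still defines a point of the blue scheme $\Gr_\ue(M)_\Fun$, i.e.\ that the prime $\fp_I$ of $\Fun[x_0,\dotsc,x_m]$ survives the quotient by the pre-addition $\cR$ rather than being collapsed to the improper ideal by some relation. Everything rests on the elementary but slightly fiddly bookkeeping that, after reducing a relation of $\cR$ to a single homogeneous degree and then modulo $\fp_I$, one is left with equally many copies of $x_{k(N)}^{d}$ on the two sides precisely because the associated polynomial vanishes at $e_{k(N)}$. A quicker but less self-contained route would be to invoke directly the compatibility of the $\Fun$-model construction of Section~\ref{subsection: closed subschemes} with the coordinate stratification of $\P^m_\Fun$, from which $x_N\in\Gr_\ue(M)_\Fun\iff e_{k(N)}\in\Gr_\ue(M)_\C$ follows at once.
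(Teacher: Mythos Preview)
The paper does not give a proof of this proposition: together with the preceding lemma it is announced as one of several facts whose explanation is deferred to a paper ``in preparation.'' So there is no argument in the text to compare against.

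Your proof is correct. The key step---checking that the coordinate prime $\fp_I=(x_i)_{i\neq k(N)}$ survives as a homogeneous prime of $B=\bpquot{\Fun[x_0,\dots,x_m]}{\cR}$---is handled cleanly by evaluating relations at $e_{k(N)}\in\Gr_\ue(M)_\C$: since $\cI$ is homogeneous you may pass to a single degree $d$, and then the coefficient of $x_{k(N)}^d$ in any $f\in\cI$ is exactly $f(e_{k(N)})=0$. This simultaneously shows additive closure of $\fp_I$ and that every relation in $\cR$ becomes trivial modulo $\fp_I$, whence $B/\fp_I\cong\Fun[x_{k(N)}]$ and $\overline{x_N}\cong\Proj\Fun[x_{k(N)}]\cong\Spec\Fun$, giving $\rk x_N=0$. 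The injectivity is immediate since $k(N)$ determines the coordinate subspace $\bigcup_i(\cB_i\cap N)$ and hence $N$.

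One remark worth making explicit: your argument uses only the first condition in the definition of $\Gr^\ast_\ue(M,\Fun)$ (that each $N_i$ is spanned by $\cB_i\cap N$) together with $N\in\Gr_\ue(M,\C)$; the second condition $M_\alpha(\cB_i\cap N)\subset\cB_j$ never enters. So you have in fact shown that the possibly larger set of ``coordinate subrepresentations'' already injects into $\cW(\Gr_\ue(M)_\Fun)$, a mild strengthening of the stated proposition.
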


The following result extends to a larger class of quiver Grassmannians, but since at the point of writing it is not clear to me how to overcome certain technical assumptions, I restrict to quote the following special case that is independent of these assumptions. 

\begin{thm}\label{thm: Euler characteristic and F1-rational points}
 Let $Q$ be a tree and $M$ a representation of $Q$ with an integral basis $\cB=\bigcup\cB_i$ such that all linear maps $M_\alpha$ correspond to invertible diagonal matrices. Then the Euler characteristic of $\Gr_\ue(M,\C)$ equals the cardinality of $\cW(\Gr_\ue(M)_\Fun)$. 
\end{thm}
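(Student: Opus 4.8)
The plan is to reduce both sides of the identity to the same combinatorial quantity by exploiting a diagonal torus action. Since $Q$ is a connected tree and every $M_\alpha$ is an invertible diagonal matrix, all the spaces $M_i$ have one and the same dimension $d$, and the torus $T=\Gm^d$, acting on each $M_i$ by coordinatewise scaling in the basis $\cB_i$, acts on the representation $M$ by automorphisms, because diagonal matrices commute with the $M_\alpha$. Hence $T$ acts algebraically on the projective complex scheme $\Gr_\ue(M)_\C$.

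First I would compute the left-hand side. A point $N=(N_i)$ of $\Gr_\ue(M)_\C$ is $T$-fixed if and only if each $N_i\subseteq M_i$ is $T$-invariant, i.e.\ spanned by a subset $S_i\subseteq\cB_i$; and since an invertible diagonal $M_\alpha$ preserves every coordinate line, the subrepresentation condition $M_\alpha(N_i)\subseteq N_j$ becomes, after the identification of the bases $\cB_i$ furnished by the $M_\alpha$, the condition $S_i\subseteq S_j$ for each arrow $\alpha\colon i\to j$. Thus the fixed locus is the finite discrete set
\[
 \Gr_\ue(M)_\C^T\ =\ \bigl\{\ (S_i)_{i\in Q_0}\ \bigm|\ S_i\subseteq\{1,\dotsc,d\},\ \#S_i=e_i,\ S_i\subseteq S_j\text{ whenever }i\to j\ \bigr\}
\]
of what I will call the \emph{admissible labellings} of $Q$. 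By the invariance of the topological Euler characteristic under torus actions (applied to one $\Gm$-factor at a time, using $\chi(\Gm)=0$) we obtain $\chi(\Gr_\ue(M)_\C)=\chi(\Gr_\ue(M)_\C^T)$, which is the number of admissible labellings. (Alternatively, $M\cong\theta^{\oplus d}$, where $\theta$ is the thin representation of $Q$ with all spaces $\C$ and all maps the identity, and rooting the tree exhibits $\Gr_\ue(M)$ as an iterated Grassmannian bundle with an affine paving naturally indexed by the admissible labellings, so the odd Betti numbers vanish and $\chi$ counts the cells.)

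It remains to show that $\#\cW(\Gr_\ue(M)_\Fun)$ equals the number of admissible labellings. Recall that $\Gr_\ue(M)_\Fun$ is the closed subscheme of $\P^m_\Fun$ cut out, through the composite Pl\"ucker--Segre embedding of Section~\ref{subsection: f1-models of quiver grassmannians}, by the pre-addition generated by the relations holding in the vanishing ideal of $\Gr_\ue(M)$ inside $\SP^m_\Z$, and that $\cW(\Gr_\ue(M)_\Fun)$ is its set of rank-$0$ points. The dictionary I would set up, using the $\Proj$-construction for graded blueprints together with the compatibility of the base change $(\blanc)^+_\Z$ with closures, is: the points of $\Gr_\ue(M)_\Fun$ are precisely the homogeneous coordinate primes $\fp_I=(x_i)_{i\in I}$ for which the locally closed stratum of $\Gr_\ue(M)_\Z$ where the $x_i$ with $i\in I$ vanish and the other coordinates do not is non-empty, and the rank of such a point equals the dimension of the closure of that stratum in $\Gr_\ue(M)_\Q$. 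A non-empty stratum whose complement $I^c$ is a single index is then one coordinate point of $\SP^m_\Z$ lying on $\Gr_\ue(M)_\Z$, and such coordinate points are exactly the $T$-fixed points of $\Gr_\ue(M)_\C$, equivalently the admissible labellings. Moreover $\Gr_\ue(M)_\C$ has no $T$-fixed point with more than one non-zero Pl\"ucker coordinate, so every stratum with $\#I^c\geq2$ contains a point that is not $T$-fixed, hence a positive-dimensional $T$-orbit, hence is positive-dimensional; therefore the rank-$0$ points of $\Gr_\ue(M)_\Fun$ are exactly the admissible labellings. Combined with the preceding paragraph this gives $\#\cW(\Gr_\ue(M)_\Fun)=\chi(\Gr_\ue(M)_\C)$.

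I expect the main obstacle to be the dictionary of the third paragraph: identifying the points of the combinatorially defined blue scheme $\Gr_\ue(M)_\Fun$ with the non-empty torus strata of $\Gr_\ue(M)_\Z$ and, above all, matching the rank of a blue point with the dimension of the associated rational stratum. The invertible-diagonal hypothesis enters here essentially, as it is exactly what forces all strata of larger support to be positive-dimensional, so that no spurious rank-$0$ points appear; for a general representation $M$ the $\Fun$-model can already misbehave on the level of the naive set of $\Fun$-rational points (as the example $\C^2\to\C^2$ given by $2\cdot\id$ in Section~\ref{subsection: naive set of f1-rational points} shows), which is why the theorem is stated only under these hypotheses. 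Once the dictionary is in place, the torus computation and the combinatorial bookkeeping should be routine.
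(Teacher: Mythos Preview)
Your torus-fixed-point computation of $\chi(\Gr_\ue(M)_\C)$ is correct, and your parenthetical alternative---rooting the tree and exhibiting $\Gr_\ue(M)$ as an iterated Grassmannian bundle with an affine paving indexed by the admissible labellings---is in fact the route the paper takes. The paper's argument is to establish a Schubert decomposition into affine spaces that is already defined on the $\Fun$-model $\Gr_\ue(M)_\Fun$ (citing \cite{L12a}); each cell then contributes one unit to $\chi$ and, because an affine cell over $\Fun$ has a unique closed point and that point has rank $0$, one point to $\cW(\Gr_\ue(M)_\Fun)$. Both sides are thus counted by the same set of cells, and your dictionary is never invoked.

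Your dictionary is plausible, but, as you anticipate, it is where the real work sits, and one direction is delicate. The implication ``stratum non-empty $\Rightarrow$ $\fp_I$ is a prime of the blue coordinate ring'' follows at once by evaluating at a field-valued point of the stratum. The converse, which your rank argument uses, does not hold for $\Fun$-models of arbitrary closed subschemes of $\SP^m_\Z$: an integral blueprint quotient $B/\fp_I$ need not inject into $(B/\fp_I)^+_\Q$, so a prime $\fp_I$ can exist whose $\Q$-stratum is empty and whose closure $\Gr_\ue(M)_\Q\cap V(x_i:i\in I)$ is $0$-dimensional---a spurious rank-$0$ point. Concretely, if some element of the defining ideal reduces modulo $(x_i:i\in I)$ to $c\cdot m$ with $c\in\Z_{\geq 2}$ and $m$ a monomial in the $x_j$, $j\notin I$, then $m$ dies over $\Q$ but the relation $c\cdot m\equiv 0$ does not force $m=0$ in the blueprint. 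To close your argument you would have to check that the Pl\"ucker relations, the block-vanishing conditions from the embedding $\prod\Gr(e_i,d_i)\hookrightarrow\Gr(\norm\ue,\norm\ud)$, and the incidence relations encoding $N_i\subseteq N_j$ never conspire to produce such a phantom prime. This is believable, since these relations have coefficients $\pm1$, but it is exactly the cell-by-cell bookkeeping that the paper's explicit Schubert decomposition is designed to replace.
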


The technique of proof is to establish Schubert decompositions into affine spaces, which are already defined for the $\Fun$-model of the quiver Grassmannian (cf.\ the author's paper \cite{L12a}). Each Schubert cell contributes the value $1$ to the Euler characteristic and one point to the Weyl extension $\cW(\Gr_\ue(M)_\Fun)$. Note that Haupt's methods to calculate Euler characteristics of quiver Grassmannians apply to the class considered in the theorem, see \cite{Haupt12}.

We derive the following consequence for the set $\Gr^*_\ue(M,\Fun)$.

\begin{cor}\label{thm: Euler characteristic and naive f1-rational points}
 If in the context of Theorem \ref{thm: Euler characteristic and F1-rational points}, every linear map $M_\alpha$ corresponds to the identity matrix, then $\Gr^*_\ue(M,\Fun)=\cW(\Gr_\ue(M)_\Fun)$. Consequently, the Euler characteristic of $\Gr_\ue(M,\C)$ equals the number of elements of $\Gr^*_\ue(M,\Fun)$.
\end{cor}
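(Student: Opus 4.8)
The plan is to reduce the Corollary to a point count. By the Proposition above there is a natural inclusion $\Gr^*_\ue(M,\Fun)\subseteq\cW(\Gr_\ue(M)_\Fun)$, and by Theorem \ref{thm: Euler characteristic and F1-rational points} --- which applies since $Q$ is a tree and the identity matrix is invertible and diagonal --- we have $\#\cW(\Gr_\ue(M)_\Fun)=\chi(\Gr_\ue(M,\C))$. Both sets being finite, it suffices to prove $\#\Gr^*_\ue(M,\Fun)=\chi(\Gr_\ue(M,\C))$; the inclusion is then automatically a bijection, and the ``Consequently'' clause follows.

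To carry out this count I would use a torus action. Since $Q$ is a connected tree and every $M_\alpha$ is the identity matrix with respect to the integral basis, all the $M_i$ are identified with one $\C^d$, and $\Gr_\ue(M)_\C$ is the closed subvariety of $\prod_{i\in Q_0}\Gr(e_i,d)$ cut out by the incidences $N_{s(\alpha)}\subseteq N_{t(\alpha)}$, $\alpha\in Q_1$. The diagonal maximal torus $T\subseteq\GL_d$ acts algebraically on this variety with isolated fixed locus: a tuple $(N_i)$ is $T$-fixed if and only if each $N_i$ is the coordinate subspace spanned by a subset $T_i\subseteq\{1,\dots,d\}$ of the fixed basis, subject to $T_{s(\alpha)}\subseteq T_{t(\alpha)}$ for all arrows. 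Unwinding the definition of $\Gr^*_\ue(M,\Fun)$, and using that $M_\alpha=\id$ sends basis vectors to basis vectors, this fixed locus is exactly $\Gr^*_\ue(M,\Fun)$. Invariance of the Euler characteristic under algebraic torus actions --- $\chi(X)=\chi(X^T)$ for any complex variety $X$ with an algebraic $T$-action, by additivity of the compactly supported Euler characteristic together with $\chi_c((\C^\times)^k)=0$ --- then yields $\chi(\Gr_\ue(M,\C))=\#(\Gr_\ue(M)_\C)^T=\#\Gr^*_\ue(M,\Fun)$.

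I expect the main obstacle to be exactly this last identification, that is, relating the naive $\Fun$-rational points to the Euler characteristic; it is also the step that genuinely uses the identity hypothesis, since for a non-identity invertible diagonal matrix such as $\tinymat 2002$ the coordinate subspaces remain the $T$-fixed points but are no longer naive $\Fun$-points, so that one only has $\Gr^*_\ue(M,\Fun)\subsetneq\cW(\Gr_\ue(M)_\Fun)$ in general (as in the example preceding the Corollary). There is also a more intrinsic route avoiding complex geometry: identify $\cW(\Gr_\ue(M)_\Fun)$ directly with the compatible tuples of coordinate subspaces, using that $\cW$ commutes with the finite product $\prod_{i\in Q_0}\Gr(e_i,d_i)_\Fun$ (\cite[Thm.\ 3.8]{blueprints2}), that the closed immersion $\Gr_\ue(M)_\Fun\hookrightarrow\prod_{i\in Q_0}\Gr(e_i,d_i)_\Fun$ of Section \ref{subsection: closed subschemes} preserves the rank of a point (together with the Lemma above, that the minimal rank is $0$), and the classical description of the rank-$0$ points of $\Gr(e,d)_\Fun$ as the $\binom de$ coordinate $e$-planes; one then checks that the incidence relations in the pre-addition cutting out $\Gr_\ue(M)_\Fun$ survive on a monomial point precisely when the associated subset-tuple is compatible. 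Either way the technical core is the same: pinning down which ``$\Fun$-points'' of the $\Fun$-model are recorded by $\cW$ and showing that under the identity hypothesis they coincide with the combinatorially defined naive points.
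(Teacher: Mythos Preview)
Your argument is correct. The torus fixed-point computation is clean: with all $M_\alpha=\id$ the diagonal torus of $\GL_d$ acts on $\Gr_\ue(M)_\C$ with isolated fixed locus equal, on the nose, to the set of compatible coordinate-subspace tuples, and this is precisely $\Gr^*_\ue(M,\Fun)$ once one unwinds the definition (the side condition $M_\alpha(\cB_i\cap N)\subset\cB_j$ is automatic when $M_\alpha=\id$). The localization identity $\chi(X)=\chi(X^T)$ then gives $\#\Gr^*_\ue(M,\Fun)=\chi$, and you close the loop with the Proposition and Theorem~\ref{thm: Euler characteristic and F1-rational points}.

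The paper does not spell out a proof; it presents the Corollary as an immediate consequence of the preceding material, whose engine is the Schubert decomposition into affine cells established for the $\Fun$-model (each cell contributing one point to $\cW$). Your torus argument is a genuinely different, and in some sense dual, route: rather than exhibiting the cell decomposition of the $\Fun$-model and reading off $\cW$, you work entirely on the complex side and pin down $\#\Gr^*$ directly via localization, then force the inclusion $\Gr^*\subseteq\cW$ to be an equality by a cardinality squeeze. The two are of course linked---the Schubert cells here are the Bia{\l}ynicki-Birula cells for your torus---so the approaches meet in the middle. Your method has the advantage of being self-contained and bypassing the finer structure of the $\Fun$-model; the paper's route, on the other hand, explains \emph{why} the points of $\cW$ are what they are, which is what your ``intrinsic route'' sketch is reaching for.
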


%%%%%%%%%%%%%%%%%%%%%%%%%%%%%%%%%%%%%%%%%%%%%%%%%%%%%%%%%%%%%%%%%%%%%%%%%%%%%%%%%%%%%%%%%%%%%%%%%%%%%%%%%%%%%%%%%%%%%%%%%%%%%%%%%%%%%%%%%%%%%%%%%%%%%%%%%%%%%%%%%%%%%%%%%%%%%%%%%%%%%%%%%%%%%%%%%%%%%%%%%%%%%%%%%%%%%%%%%%%%
%%%%%%%%%%%%%%%%%%%%%%%%%%%%%%%%%%%%%%%%%%%%%%%%%%%%%%%%%%%%%%%%%%%%%%%%%%%%%%%%%%%%%%%%%%%%%%%%%%%%%%%%%%%%%%%%%%%%%%%%%%%%%%%%%%%%%%%%%%%%%%%%%%%%%%%%%%%%%%%%%%%%%%%%%%%%%%%%%%%%%%%%%%%%%%%%%%%%%%%%%%%%%%%%%%%%%%%%%%%%

\section{Arithmetic curves}
\label{section: the arithmetic line}

In this section, we will define a locally blueprinted space $X=\overok$ that captures properties that are expected from the Arakelov compactification\index{Arakelov theory} of $\Spec O_K$ where $O_K$ is the ring of algebraic integers of a number field $K$. The space $X$ behaves like a curve over $\Funn$ where $n$ is the number of roots of unity in $K$. Most notably, its self-product $X\times_\Funn X$ is $2$-dimensional. We will explain this in the following. More details on the special case $K=\Q$ and $X=\overz$ can be found in \cite{blueprints-mpi}.

\subsection{The naive definition of $\overok$}
\label{subsection: naive definition of spec ok}

There is a clear expectation of what an \emph{arithmetic curve $\overok$}\index{Arithmetic curve} should be. We describe this in the following. This viewpoint is naive in the sense that it is not clear, which category is suitable to give a rigorous definition for $X=\overok$. Namely, in analogy with complete smooth curves over a field, we expect that the underlying topological space of $X$ consists of a unique generic point $\eta$ (corresponding to the discrete norm $\norm \ _0$) and a closed point $p$ for every (non-discrete) place $\norm\ _p$ of the ``function field'' $K$ of $X$. The closed sets of $X$ are finite sets $\{p_1,\dotsc,p_n\}$ of non-trivial places and $X$ itself. Further, there should be a structure sheaf $\cO_X$, which associates to the open set $U=X-\{p_1,\dotsc,p_n\}$ the set
\[
 \cO_X \bigl( U \bigr) \quad = \quad \left\{ \ a \in K \ \left| \ \norm{a}_q\leq 1\text{ for all }q\notin\{p_1,\dotsc,p_n\}\ \right.\right\}
\]
of regular functions. As a consequence, the global sections are $\Gamma(X,\cO_X)=\cO_X(X)=\{0\}\cup\mu_n$ where $\mu_n$ is the group of roots of unity of $K$, which should be thought of as the constants of $X=\overok$. The stalks of $\cO_X$ are 
\[
 \cO_{X,p} \quad = \quad \left\{ \ a \in K \ \left| \ \norm{a}_p\leq 1\ \right.\right\},
\]
with ``maximal ideals''
\[
 \fm_p \quad = \quad \left\{ \ a \in K \ \left| \ \norm{a}_p< 1\ \right.\right\}
\]
for every place $p$. One observes that $X$ is indeed an extension of the scheme $\Spec O_K$, i.e.\ the restriction of $X$ to the open subset of non-archimedean places can be identified with $\Spec O_K$. The problem with this definition is that the set $\cO_X(U)$ is not a sub\emph{ring} of $K$ if $U$ contains an archimedean place, and neither is the stalk $\cO_{X,p}=\{a\in K|\norm a_p\leq1\}$ at an archimedean place $p$.

Therefore, it is not clear as what kind of structure the sets $\cO_X(U)$ should be considered. Note that all of these sets are monoids with zero. This emphasizes the viewpoint that $\overok$ should be an object defined in terms of $\Fun$-geometry, whose basic idea is to forget or, at least, to loosen addition. There are different suggestions by Durov (cf.\ \cite{Durov07}), Haran (cf.\ \cite{Haran07, Haran09}) and Takagi (cf.\ \cite{Takagi12b}) for $\overz$, from which the most promising seems to be the one in \cite{Haran09} since the self-product of $\overz$ over $\Fun$ yields an interesting and complicated space. In the following section, we will present a model of $\overz$ and more generally of $\overok$ as a locally blueprinted space, whose self-product behaves in analogy to a curve over a field.

\subsection{Arithmetic curves as locally blueprinted spaces}
\label{subsection: spec z as a locally blueprinted space}

The crucial observation is that the (prime) ideals of a discrete valuation ring $R$ are the same as the (prime) ideals of the underlying multiplicative monoid $R^\bullet$. Indeed, if $p$ is a uniformizer of $R$, then $R^\bullet$ is isomorphic to the free monoid $A[p]$ over $A=\{0\}\cup R^\times$. Since $A$ is a blue field, the ideals of $A[p]$ are $(0)$ and $(p^i)$ for $i\in\N$. The only prime ideals are $(0)$ and $\fm=(p)$. Since the inclusion $R^\bullet\to R$ of blueprints induces a bijection between the (prime) ideals, this result is still true for any blueprint $B$ in between $R^\bullet$ and $R$, i.e.\ for any pre-addition for $R^\bullet$ that consists of additive relations that hold in $R$. 

This observation leads to the following definition of the compactification of the spectrum of a ring $O_K$ of algebraic integers of a number field $K$. We will define the \emph{completion of $\Spec O_K$}\index{Completion of $\Spec O_K$} as a locally blueprinted space $\overok=(X,\cO_X)$ where $X$ is the topological space from the previous section.

Recall further from Section \ref{subsection: naive definition of spec ok} that the monoid $\Gamma=\{a\in K|\norm a_p\leq 1\text{ for all }p\in X\}$ equals $\{0\}\cup\mu_n$ where $\mu_n$ is the group of roots of unity of $K$. If we endow $\Gamma$ with the pre-addition $\cR$ that contains all relations that hold between the elements of $\Gamma$ in $O_K$, then $\bpquot\Gamma\cR$ is isomorphic to $\Funn$. Thus we will think of $\overok$ as a curve defined over $\Funn$, and require the same additive relations $\cR$ for the structure sheaf $\cO_X$.

Namely, we define for $U=X-\{p_1,\dotsc,p_n\}$,
\[
 A(U) \quad = \quad \Bigl\{ \ a \in K \ \Bigl| \ \norm{a}_q\leq 1\text{ for all }q\notin\{p_1,\dotsc,p_n\}\ \Bigr\}
\]
and the structure sheaf $\cO_X$ by $\cO_X(U)=\bpgenquot{A(U)}{\cR}$ together with the natural inclusions $\res_{U|V}:\cO_X(U)\hookrightarrow\cO_X(V)$ as restriction maps. It turns out that $\cO_X$ is a sheaf and that indeed $\Gamma(X,\cO_X)=\bpquot\Gamma\cR\simeq\Funn$. The stalk at a place $p$ is $\cO_{X,p}=\bpgenquot{A_p}{\cR}$ where
\[
 A_p \quad = \quad \Bigl\{ \ a \in K \ \Bigl| \ \norm{a}_p\leq 1\ \Bigr\}.
\]
The blueprint $\cO_{X,p}$ is local with the maximal ideal $\fm_p=\{ \, a \in K \, | \, \norm{a}_p< 1\, \}$. The only other prime ideal of $\cO_{X,p}$ is $0$. For a non-archimedean place $p$, this follows from the considerations at the beginning of this section, since $O_{K,p}$ is a discrete valuation ring. For an archimedean place $p$, the condition $IB\subset I$ implies that the ideals of $\cO_{X,p}$ are of the form
\[
 I^c_r \quad = \quad \bigl\{ \ a\in K \ \bigl| \ \norm a_p \leq r \ \bigr\}  \qquad \text{or} \qquad I^o_r \quad = \quad \bigl\{ \ a\in K \ \bigl| \ \norm a_p < r \ \bigr\}
\]
for some $r\in[0,1]$ (for $I^c_r$) resp.\ $r\in (0,1]$ (for $I^o_r$). It is clear that the only prime ideals among these are $(0)=I^c_0$ and $\fm_p=I^o_1$. Note that $I^c_r=I^o_r$ if $r$ is not the norm $\norm a_p$ of an element $a\in K$.

In particular, we see that $\overok=(X,\cO_X)$ is a locally blueprinted space that is defined over $\Funn$ and whose dimension is $1$ (as a topological space). Its structure sheaf is coherent in the sense that $X$ has an open covering $\{U_i\}$ such that for every $V\subset U_i$, the restriction map $\cO_X(U_i)\to\cO_X(V)$ is a localization while its global sections are equal to the field of definition. Concerning these aspects, $\overok$ behaves like a complete curve over the cyclotomic field extension $\Funn$ of $\Fun$.

\subsection{The arithmetic surface $\overok\times_\Funn\overok$}
\label{subsection: the arithmetic surface}

 Since for every place $p$, the residue field $\kappa(p)$ can be embedded into fields of any characteristic, Theorem \ref{thm: fibre products} implies the following.

 \begin{prop}
  The ``arithmetic surface'' $\overok\times_\Funn \overok$ is a topological space of dimension $2$. \qed
 \end{prop}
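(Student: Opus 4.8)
The plan is to read off the result from Theorem~\ref{thm: fibre products}. That theorem realizes $\overok\times_\Funn\overok$ as the subspace of the topological product $\overok\toptimes\overok$ consisting of those pairs $(x,y)$ for which there exist a semifield $k$ and blueprint morphisms $\kappa(x)\to k$ and $\kappa(y)\to k$. So the argument splits into two parts: first, identify which pairs satisfy this criterion (I expect: all of them), and second, compute the dimension of the ambient product $\overok\toptimes\overok$.

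For the first part I would use the explicit description of $\cO_X$ from Section~\ref{subsection: spec z as a locally blueprinted space}. Every stalk $\cO_{X,z}$ has underlying monoid a submonoid of $K$ and carries the pre-addition generated by the \emph{cyclotomic} relations $\sum_{i=0}^{n/d}\zeta_n^{di}\=0$ inherited from $\Funn$ (and \emph{not} all additive relations of $K$); hence each residue field $\kappa(z)$ is a blue field whose monoid is a subgroup-with-zero of $K^\times\cup\{0\}$ and whose pre-addition is again generated by those cyclotomic relations. Now fix any embedding $\iota\colon K\hookrightarrow\C$. The cyclotomic relations hold in $\C$, so $\iota$ restricts to a blueprint morphism $\kappa(z)\to\C$ for every point $z$ of $\overok$. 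Taking $k=\C$ in the criterion of Theorem~\ref{thm: fibre products} then shows that \emph{every} pair $(x,y)\in\overok\toptimes\overok$ lies in $\overok\times_\Funn\overok$; that is, $\overok\times_\Funn\overok=\overok\toptimes\overok$ as topological spaces. This is exactly the content of the remark preceding the proposition, that $\kappa(p)$ embeds into fields of every characteristic.

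For the second part I would compute $\dim(\overok\toptimes\overok)=2$ by a direct inspection of irreducible closed subsets. Recall that $\overok$ is a one-dimensional, sober, Noetherian space whose only irreducible closed subsets are $\overok$ itself (generic point $\eta$) and the singletons $\{p\}$ of places $p$; its opens are $\overok$ and the complements of finite sets of places. Hence any proper closed subset of $\overok\toptimes\overok$ is contained in a set of the form $(F\times\overok)\cup(\overok\times G)$ with $F,G$ finite, so an irreducible closed subset of $\overok\toptimes\overok$ that is not the whole space is contained in some $\{p\}\times\overok$ or $\overok\times\{q\}$; these copies of $\overok$ are one-dimensional, which forbids a chain of length $3$. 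Since the chain $\{(p,q)\}\subsetneq\overok\times\{q\}\subsetneq\overok\toptimes\overok$ has length $2$, the dimension is exactly $2$, and by the first part so is the dimension of $\overok\times_\Funn\overok$.

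The delicate point — the main obstacle — is the first part, namely getting the residue fields of $\overok$ right. One must insist on the cyclotomic pre-addition $\cR$ rather than the full addition of $K$; it is precisely because $\kappa(z)$ retains only the cyclotomic relations, which survive in fields of any characteristic, that a common semifield (here $\C$) always exists. If one prefers to avoid claiming the full equality $\overok\times_\Funn\overok=\overok\toptimes\overok$, an alternative is to prove only the two inequalities: for $\dim\ge 2$, exhibit the specialization chain $(\eta,\eta)\rightsquigarrow(\eta,p_\infty)\rightsquigarrow(p_\infty,p_\infty)$ for an archimedean place $p_\infty$ of $K$ (one exists since $K$ is a number field), using that $\kappa(\eta)$ and $\kappa(p_\infty)$ both map to $\C$; for $\dim\le 2$, run the subspace argument against $\overok\toptimes\overok$, noting that a chain of irreducible closed subsets in a subspace has strictly increasing closures in the ambient space. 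Either way, Theorem~\ref{thm: fibre products} and the structure of $\overok$ do all the work.
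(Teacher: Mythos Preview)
Your proposal is correct and matches the paper's approach exactly: the paper's entire proof is the one sentence preceding the proposition (residue fields of $\overok$ map to fields of any characteristic, so Theorem~\ref{thm: fibre products} applies) together with the \qed, and you have simply unpacked that sentence---first identifying the fibre product with the full topological product via a common target semifield for all residue fields, then reading off the dimension of $\overok\toptimes\overok$. Your care in insisting that the stalks carry only the pre-addition $\gen\cR$ generated by the cyclotomic relations (rather than the full addition of $K$) is exactly the point that makes the residue-field morphisms to $\C$ exist, and your alternative route via the explicit length-$2$ chain and the subspace bound is a perfectly good variant.
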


\begin{rem}
 The locally blueprinted space $\overok$ comes together with a morphism $\varphi:\Spec O_K\to\overok$, which is a topological embedding. The image $Y$ of $\varphi$ (as a locally blueprinted space) is thus homeomorphic to $\Spec O_K$ and the underlying monoids of the structure sheaf are the same, only the pre-additions of $\Spec O_K$ and $Y$ differ. 
\end{rem}

%%%%%%%%%%%%%%%%%%%%%%%%%%%%%%%%%%%%%%%%%%%%%%%%%%%%%%%%%%%%%%%%%%%%%%%%%%%%%%%%%%%%%%%%%%%%%%%%%%%%%%%%%%%%%%%%%%%%%%%%%%%%%%%%%%%%%%%%%%%%%%%%%%%%%%%%%%%%%%%%%%%%%%%%%%%%%%%%%%%%%%%%%%%%%%%%%%%%%%%%%%%%%%%%%%%%%%%%%%%%
%%%%%%%%%%%%%%%%%%%%%%%%%%%%%%%%%%%%%%%%%%%%%%%%%%%%%%%%%%%%%%%%%%%%%%%%%%%%%%%%%%%%%%%%%%%%%%%%%%%%%%%%%%%%%%%%%%%%%%%%%%%%%%%%%%%%%%%%%%%%%%%%%%%%%%%%%%%%%%%%%%%%%%%%%%%%%%%%%%%%%%%%%%%%%%%%%%%%%%%%%%%%%%%%%%%%%%%%%%%%

\section{$K$-theory}
\label{section: k-theory}

As explained in Part \ref{partI} of this text, the $K$-theory\index{K-theory@$K$-theory} of $\Fun$ is expected to equal the stable homotopy of the sphere spectrum, i.e.\ $K(\Fun)=\pi^\st(\S^0)$. Deitmar defines $K$-theory for monoids and makes sense of this formula in \cite{Deitmar07}. Chu, Santhanam and the author extend in \cite{CLS12} the definition of $K$-theory to monoidal schemes, which requires a close inspection of categories of $\cO_X$-modules of monoidal schemes. As a particular result, it turns out that there exists indeed a $K$-theory spectrum $\cK(X)$ for every monoidal scheme $X$, which comes with a ring structure. This implies that the $K$-theory of $\Spec\Fun$ equals the stable homotopy of the sphere spectrum as a ring. 

The definition of $K$-theory in \cite{CLS12} generalizes to blue schemes. We will summarize this theory and its extension to blue schemes in the following.

\subsection{Blue modules}
\label{subsection: blue modules}

Let $M$ be a pointed set. We denote the base point of $M$ by $\ast$. A \emph{pre-addition on $M$}\index{Pre-addition} is an equivalence relation $\cP$ on the semigroup $\N[M]=\{\sum a_i|a_i\in M\}$ of finite formal sums in $M$ with the following properties (as usual, we write $\sum m_i\=\sum n_j$ if $\sum m_i$ stays in relation to $\sum n_j$):
\begin{enumerate}
 \item $\sum m_i\=\sum n_j$ and $\sum p_k\=\sum q_l$ implies $\sum m_i+\sum p_k\=\sum n_j+\sum q_l$,
 \item $\ast\=(\text{empty sum})$, and 
 \item if $m\=n$, then $m=n$ (in $M$).
\end{enumerate}
Let $B=\bpquot A\cR$ be a blueprint. A \emph{blue $B$-module}\index{Blue $B$-module} is a set $M$ together with a pre-addition $\cP$ and a \emph{$B$-action $B\times M\to M$}\index{B-action@$B$-action}, which is a map $(b,m)\mapsto b.m$ that satisfies the following properties:
\begin{enumerate}
 \item $1.m=m$, $0.m=\ast$ and $a.\ast=\ast$,
 \item $(ab).m=a.(b.m)$, and
 \item $\sum a_i\=\sum b_j$ and $\sum m_k\=\sum n_l$ implies $\sum a_i.m_k\=\sum b_j.n_l$.
\end{enumerate}
A \emph{morphism of blue $B$-modules $M$ and $N$}\index{Morphism!of blue $B$-modules} is a map $f:M\to N$ such that
\begin{enumerate}
 \item $f(a.m)=a.f(m)$ for all $a\in B$ and $m\in M$ and 
 \item whenever $\sum m_i\=\sum n_j$ in $M$, then $\sum f(m_i) \= \sum f(n_j)$ in $N$. 
\end{enumerate}
This implies in particular that $f(\ast)=\ast$. We denote the category of blue $B$-modules by $\Mod B$.

\begin{prop}
 The category $\Mod B$ is complete and cocomplete. The trivial blue module $0=\{\ast\}$ is an initial and terminal object of $\Mod B$.
\end{prop}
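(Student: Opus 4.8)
The plan is to verify the two assertions --- completeness/cocompleteness of $\Mod B$ and the fact that $0=\{\ast\}$ is a zero object --- by the standard categorical route: one constructs all small limits and colimits explicitly, checking that the required universal properties hold, and then observes that the one-point pointed set carries a unique $B$-module structure through which every hom-set factors trivially.

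First I would treat the zero object, which is the quick part. The set $0=\{\ast\}$ has a unique pre-addition (the only relation is $\ast\=(\text{empty sum})$ together with its consequences, and axioms (i)--(iii) are vacuously satisfied) and a unique $B$-action ($b.\ast=\ast$). For any blue $B$-module $M$, the map $\ast\mapsto\ast$ is the unique morphism $0\to M$ (it preserves the $B$-action and the trivial pre-addition), so $0$ is initial; and the constant map $M\to 0$ is the unique morphism, since any morphism must send $\ast$ to $\ast$ and there is nowhere else to go, so $0$ is terminal. Hence $0$ is a zero object, which also supplies the zero morphisms needed later.

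Next, for completeness and cocompleteness I would build products, equalizers, coproducts and coequalizers; by the standard fact that a category with all small products and equalizers (resp.\ coproducts and coequalizers) has all small limits (resp.\ colimits), this suffices. For a family $\{M_k\}_{k\in I}$ the product is the pointed set $\prod_k M_k$ (base point the tuple of base points), with the componentwise $B$-action and the pre-addition in which $\sum (m_i^{(k)})_k \= \sum (n_j^{(k)})_k$ iff $\sum m_i^{(k)} \= \sum n_j^{(k)}$ in each $M_k$; one checks axioms (i)--(iii) componentwise, using that axiom (iii) in each $M_k$ forces equality in the product. The coproduct is the wedge $\bigvee_k M_k$ (disjoint union with base points identified), with the evident $B$-action, and pre-addition generated by the pre-additions of the summands --- here one must take the smallest pre-addition containing all the relations $\sum m_i\=\sum n_j$ lifted from the individual $M_k$, and verify it still satisfies (iii); this is the analogue of the ``pre-addition generated by a set $S$'' discussed after the definition of a blueprint, and the same argument (no new identifications of distinct elements are forced) applies. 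For equalizers of $f,g:M\rightrightarrows N$ take the sub-pointed-set $\{m\in M\mid f(m)=g(m)\}$ with the restricted structure --- it is closed under the $B$-action and inherits a pre-addition by restriction. For coequalizers take the quotient of $N$ by the smallest pre-addition (and smallest $B$-action-compatible identification of elements) containing $f(m)\= g(m)$ for all $m\in M$; concretely one forms the quotient pointed set identifying $f(m)$ with $g(m)$ and then the quotient pre-addition, again checking (iii) survives. In each case the universal property is routine: a cone/cocone factors through the constructed object by the obvious map, and uniqueness follows because that map is forced on elements.

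The main obstacle, and the only place where genuine care is needed, is the verification of axiom (iii) --- ``$m\=n$ implies $m=n$'' --- for the colimit constructions (coproducts and coequalizers), since colimits involve generating a pre-addition from a set of relations, and a priori such a generation process could collapse distinct elements of the underlying pointed set. This is exactly the subtlety already flagged in the paper for blueprints themselves (the pre-addition generated by $S$ exists only when the resulting equivalence relation still satisfies (iii)). I would handle it by giving an explicit description of the generated pre-addition --- as the transitive-reflexive-symmetric closure of single-step rewrites by the generating relations plus additivity and multiplicativity --- and observing that every such rewrite, when restricted to relations of the form $m\=n$ with $m,n$ single elements, only ever produces consequences already present; equivalently, one notes that $\Mod B$ admits a faithful forgetful functor to pointed sets that preserves colimits of the underlying sets, and the pre-addition is then determined as the finest one making the structure maps morphisms, so (iii) is automatic. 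Once this point is dispatched, the remaining checks are the mechanical diagram-chases indicated above, and the proposition follows.
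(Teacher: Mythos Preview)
The paper states this proposition without proof (the section is adapted from \cite{CLS12}, and in this survey the result is simply asserted), so there is no argument to compare against; your task is effectively to supply one.

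Your treatment of the zero object, products, equalizers, and coproducts is fine. The gap is in the coequalizer. Your fallback claim that the forgetful functor to pointed sets preserves colimits is false: identifications can cascade through the pre-addition. Take $B=\Fun$, let $N=\{\ast,a,a',b,c,c'\}$ with pre-addition generated by $a+b\=c$ and $a'+b\=c'$, and coequalize the two maps from the free one-generator module sending the generator to $a$ and to $a'$. The pointed-set coequalizer identifies only $a$ with $a'$; but then the pre-addition generated on this quotient contains $c\=[a]+b\=c'$, violating axiom (iii). So the blue-module coequalizer must also collapse $c$ and $c'$, and its underlying set is strictly smaller. Your ``explicit rewrite'' description has the same defect: the rewrites you allow do produce new single-element relations once two inputs to an existing additive relation have been merged.

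The fix is to iterate: take the pointed-set quotient, generate the pre-addition, collapse any pair $m,n$ with $m\=n$ but $m\neq n$, and repeat until the process stabilizes (a transfinite induction bounded by the cardinality of $N$). Alternatively, observe that $\Mod B$ is the category of models of a limit sketch and hence locally presentable, which yields cocompleteness without the explicit construction.
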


The existence of the zero object in $\Mod B$ implies that for any two blue $B$-modules $M$ and $N$, there is a unique morphism $0:M\to N$ that factorizes through $0=\{\ast\}$. This allows us to define the (co)kernel of a morphism as the difference (co)kernel with the $0$-morphism. In particular, we obtain a notion of (short) exact sequences.

Note that if $B$ is a ring, then a $B$-module is naturally a blue $B$-module, but not vice versa since a blue $B$-module does not have to be closed under addition. The product of $B$-modules and blue $B$-modules coincide, namely, $\prod M_i$ is the Cartesian product with the induced $B$-module structure. 

This is not true for coproducts. Let $B$ be a blueprint and $(M_i)$ a family of blue $B$-modules. Then the coproduct over the $M_i$ is the wedge product $\bigvee M_i$ that identifies the base points $\ast\in M_i$. A $B$-module $M$ is \emph{free of rank $\kappa$}\index{Blue $B$-module!free} if there is an index set of cardinality $\kappa$ such that $M$ is isomorphic to $\bigvee_{i\in I} B$. A $B$-module $P$ is \emph{projective}\index{Blue $B$-module!projective} if the functor $\Hom(P,-)$ is right exact. 

As in the case of usual modules over a ring, every free $B$-module is projective. If $k$ is a blue field, then every projective $k$-module is free.

\subsection{Blue sheaves}
\label{subsection: blue sheaves}

Let $X$ be a blue scheme. A \emph{blue $\cO_X$-module}\index{Blue $\cO_X$-module} is a sheaf $\cM$ on $X$ that associates to every open subset $U$ of $X$ a blue $\cO_X(U)$-module $\cM(U)$ such that the $\cO_X(U)$-actions are compatible with the restriction maps. A \emph{morphism $\varphi:\cM\to \cN$ of $\cO_X$-modules}\index{Morphism!of blue $\cO_X$-modules} is a family of $\cO_X(U)$-module morphisms $\cM(U)\to\cN(U)$ for open subsets $U$ of $X$ that commute with the restriction maps of $\cM$ and $\cN$.

The category of $\cO_X$-modules inherits the properties of blue $B$-modules: it is complete and cocomplete and it has a zero object $0$. 

A blue $\cO_X$-module $\cM$ on $X$ is \emph{locally free of rank $\kappa$}\index{Blue $\cO_X$-module!locally free} if $\cM(U)$ is a free $\cO_X(U)$-module of rank $\kappa$ for all affine opens $U$ of $X$. A blue $\cO_X$-module $\cM$ on $X$ is \emph{locally projective}\index{Blue $\cO_X$-module!locally projective} if there exists an affine open covering $\{U_i\}$ of $X$ such that $\cM(U_i)$ is a projective $\cO_X(U_i)$-module for all $i$. A locally projective blue $\cO_X$-module $\cP$ is \emph{finitely generated}\index{Blue $\cO_X$-module!finitely generated} if there exists an epimorphism $\cM\to\cP$ from a locally free blue $\cO_X$-module $\cM$ of finite rank. We denote the category of finitely generated locally projective blue $\cO_X$-modules together with all $\cO_X$-module morphisms by $\Bun X$.

\begin{ex}
 In contrast to the theory of Grothendieck schemes, locally projective $\cO_X$-modules are in general not locally free. This can be seen in the following example. Let $B=\{0,e,1\}$ be the monoid with one idempotent $e^2=e$. Then the ideal $\fp=\{0,e\}$ of $B$ is a projective $B$-module. The spectrum of $B$ consists of the two prime ideals $(0)$ and $\fp$ and the open sets $\emptyset$, $\{(0)\}$ and $X$. We define a sheaf $\cP$ by $\cP(\{(0)\})=\cP(X)=\fp$ together with the obvious restriction maps. Then $\cP$ is a locally projective $\cO_X$-module.
 
 An open covering $\{U_i\}$ of $X$ necessarily contains $X$. Since $\cP(X)=\fp$ is not a free $B$-module, the locally projective $\cO_X$-module $\cP$ is not locally free.
\end{ex}

\subsection{Blue $K$-theory}
\label{subsection: blue k-theory}

Let $X$ be a blue scheme. Recall that a monomorphism is called \emph{normal}\index{Normal monomorphism} if it is the kernel of its cokernel and that an epimorphism is called \emph{normal}\index{Normal epimorphism} if it is the cokernel of its kernel. 

The important properties for the application of Quillen's $Q$-construction are the following. 

\begin{lemma}
 Normal monomorphisms are closed under composition and under base change along normal epimorphisms. Normal epimorphisms are closed under composition and under base change along normal monomorphisms. 
\end{lemma}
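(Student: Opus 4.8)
The statement to prove is the Lemma claiming that normal monomorphisms in $\Mod B$ (equivalently, in the category of finitely generated locally projective blue $\cO_X$-modules) are closed under composition and under base change along normal epimorphisms, and dually for normal epimorphisms. The plan is to argue first at the level of a single blueprint $B$, establishing the statement in $\Mod B$, and then to transfer it to $\Bun X$ by working on an affine open cover, using that kernels and cokernels of morphisms of blue $\cO_X$-modules can be computed sectionwise on affine opens (this follows from the completeness and cocompleteness of $\Mod B$ recorded in the excerpt, together with the compatibility of localization with the module constructions). Since both (co)kernels are defined as difference (co)kernels with the zero morphism, a morphism $f$ is a normal monomorphism precisely when $f$ identifies $M$ with the set $\{m\in N\mid (\text{the two composites into }\coker f\text{ agree on }m)\}$, endowed with the induced pre-addition and $B$-action; I would first unwind this into a concrete, element-level description of normal monomorphisms and normal epimorphisms, analogous to the description of subobjects and quotients of modules over a ring but keeping track of the pre-addition.

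For closure under composition of normal monomorphisms: given $M\xhookrightarrow{f} N\xhookrightarrow{g} P$ both normal, I would show $g\circ f$ is the kernel of $P\to\coker(g\circ f)$. The key point is the snake-type diagram relating $\coker f$, $\coker(gf)$ and $\coker g$; one gets a short exact sequence $0\to\coker f\to\coker(gf)\to\coker g\to 0$ in $\Mod B$, from which one reads off that an element of $P$ lies in the image of $gf$ iff it maps to $0$ in $\coker(gf)$. Dually for epimorphisms. For closure under base change: given a normal mono $f:M\hookrightarrow N$ and a normal epi $p:N'\twoheadrightarrow N$, I would form the pullback $M'=M\times_N N'$ (which exists by cocompleteness/completeness, and is computed sectionwise on affine opens), and show the projection $M'\to N'$ is again a normal mono, i.e.\ equals the kernel of $N'\to N\to\coker f$. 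Here one uses that $p$ being a normal epi means $p$ is the cokernel of its kernel, so $N'$ is "$N$ plus a controlled kernel", and pulling back $M$ along $p$ just adds that same kernel. The dual statement — base change of a normal epi along a normal mono — is handled symmetrically by pushout. Finally, one must check that these constructions stay inside $\Bun X$: the pullback of a finitely generated locally projective module along a morphism of blue schemes is again finitely generated locally projective; this can be verified affine-locally using that $\Hom(P,-)$ right exact is preserved under the relevant base change, and that epimorphisms from locally free modules pull back to epimorphisms from locally free modules.

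The main obstacle I anticipate is the bookkeeping of pre-additions in these diagram-chases: unlike for rings, "the kernel of the cokernel" carries a pre-addition that is *larger* than the naive one on the underlying set (it records all relations among sums that become equal in the quotient), and one must check that the composite/pullback object gets exactly this pre-addition and not a coarser or finer one. Concretely, the subtlety is verifying that in the short exact sequence $0\to\coker f\to\coker(gf)\to\coker g\to 0$ the pre-addition on $\coker(gf)$ is generated precisely by the images of the pre-additions of $\coker f$ and $\coker g$, so that the element-level characterization of the image of $gf$ holds on the nose. I expect this to be the genuine content of the lemma; once the pre-addition relations are pinned down, the set-theoretic part of the argument is the same as the classical module-category argument. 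I would organize the proof so that this pre-addition compatibility is isolated as a preliminary sublemma about short exact sequences in $\Mod B$, after which closure under composition and base change follow formally, and the passage from $\Mod B$ to $\Bun X$ is routine sheafification on an affine cover.
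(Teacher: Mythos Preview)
The paper does not actually prove this lemma; it is stated as input to Quillen's $Q$-construction, with the details (in the monoidal-scheme case) deferred to \cite{CLS12}. So there is no in-paper proof to compare against.

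Your overall architecture---reduce to $\Mod B$ for a single blueprint, give an explicit element-level description of normal monos and epis, verify the closure properties, then sheafify---is sound and matches the approach of \cite{CLS12}. But the specific mechanism you propose for the composition step is where I would push back. The ``snake-type'' short exact sequence $0\to\coker f\to\coker(gf)\to\coker g\to 0$, and in particular the injectivity of $N/M\to P/M$, is not a formal consequence of being in a pointed category with kernels and cokernels: it is essentially the third isomorphism theorem, and in non-abelian settings it can fail (e.g.\ in groups, a normal subgroup of a normal subgroup need not be normal in the ambient group, so normal monos do not compose). So invoking that sequence does not reduce the problem; it \emph{is} the problem, repackaged. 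You recognize this when you flag the pre-addition bookkeeping as the crux, but your plan still routes the argument through the sequence rather than around it.

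The cleaner route---and the one actually carried out in \cite{CLS12}---is to let the concrete description do all the work. Once you have established that a normal mono $M\hookrightarrow N$ is precisely an inclusion of a sub-$B$-set carrying the induced pre-addition and saturated under it (if $n\equiv\sum m_i$ with all $m_i\in M$ then $n\in M$), and that a normal epi is a quotient by such a submodule, then closure under composition is a direct two-step element chase using saturation in $N\subset P$ first and then in $M\subset N$; no cokernel comparison is needed. Likewise, base change of a kernel along any morphism is again a kernel purely for limit-commutes-with-limit reasons, so that half is immediate; the nontrivial content is that the pullback of a normal epi along a normal mono is again a normal epi, and this too is most cleanly done from the concrete description of quotients. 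I would drop the exact-sequence scaffolding and organize the proof so that the explicit characterization is both the first step and the engine of every verification.
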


With this we can define the category $Q\Bun X$: its objects are finitely generated locally projective sheaves and its morphisms are isomorphism classes of diagrams 
\[
 \cM \stackrel f\longleftarrow \cP \stackrel g\longrightarrow \cN
\]
 where $f$ is a normal epimorphism and $g$ is a normal monomorphism. Two diagrams 
\[
 \cM\stackrel f\longleftarrow\cP \stackrel g\longrightarrow \cN \qquad \text{and} \qquad \cM\stackrel {f'}\longleftarrow\cP' \stackrel {g'}\longrightarrow \cN
\]
 are isomorphic if there exists an isomorphism $h:\cP\to \cP'$ such that $f=f'\circ h$ and $g=g'\circ h$.

The \emph{$i$-th blue $K$-group of $X$}\index{Blue $K$-theory} is defined as the homotopy group
\[
 K_i^\bl(X) \quad = \quad \pi_{i+1}|Q\Bun X|
\]
of the geometric realization of the nerve of $Q\Bun X$. We subsume some results from Sections 5.3 and 5.4 of \cite{CLS12} in the following theorem.

\begin{thm}
 Let $X$ be a monoidal scheme. Then there is a natural symmetric ring spectrum $\cK^\bl(X)$ whose stable homotopy groups are the blue $K$-groups of $X$. The induced multiplication on $K^\bl(X)=\bigoplus_{i\geq0} K_i^\bl(X)$ is compatible with the tensor product of locally projective $\cO_X$-modules.
\end{thm}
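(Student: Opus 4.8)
\noindent
The plan is to verify that $\Bun X$, with normal monomorphisms as admissible monomorphisms and normal epimorphisms as admissible epimorphisms, is a Quillen exact category; to deduce a connective $K$-theory spectrum by applying Waldhausen's iterated $S_\bullet$-construction; and then to upgrade this to a symmetric ring spectrum using the bi-exactness of $\otimes_{\cO_X}$ together with a multiplicative infinite loop space machine, following the method of \cite{CLS12}.

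First I would establish the exact-category structure. The category $\Bun X$ has the zero object $0$; a normal monomorphism has a cokernel by definition and a normal epimorphism has a kernel, so short exact sequences are available; isomorphisms are both normal monomorphisms and normal epimorphisms; and closure of the two classes under composition and under base change along the complementary class is precisely the Lemma preceding the theorem. The remaining axioms --- that the cokernel map of a normal monomorphism is a normal epimorphism with the expected kernel, and conversely --- are checked by a direct computation with pre-additions and $\cO_X$-actions, reducing to affine pieces where the sheaves are projective modules over the structure blueprints. Granting this, Quillen's $Q$-construction applied to $\Bun X$ produces the groups $K_i^\bl(X)=\pi_{i+1}|Q\Bun X|$ already recorded above, and the comparison $|Q\Bun X|\simeq\Omega|S_\bullet\Bun X|$ shows that the spaces $|S_\bullet\Bun X|,\ |S_\bullet S_\bullet\Bun X|,\dotsc$ assemble into a connective $\Omega$-spectrum $\cK^\bl(X)$ with $\pi_i\cK^\bl(X)=K_i^\bl(X)$. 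Naturality in $X$ follows because, for a morphism $f\colon X\to Y$ of monoidal schemes, the pullback $f^\ast\colon\Bun Y\to\Bun X$ is a strong symmetric monoidal exact functor.

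The multiplicative structure is carried by the tensor product. The triple $(\Bun X,\,\otimes_{\cO_X},\,\cO_X)$ is a symmetric monoidal category, and for a finitely generated locally projective sheaf $\cP$ the endofunctor $\cP\otimes_{\cO_X}-$ is exact and preserves $\Bun X$; in the monoidal case this is verified over an affine cover, using the explicit description of projective modules over a monoid with zero (locally, retracts of wedges of copies of the structure monoid, as in the example in Section \ref{subsection: blue sheaves}). With this bi-exactness in hand I would feed $(\Bun X,\otimes_{\cO_X})$ into a multiplicative infinite loop space machine --- for instance the pairing of $S_\bullet$-constructions, or the Elmendorf--Mandell machine --- to equip $\cK^\bl(X)$ with the structure of a symmetric ring spectrum refining the connective spectrum above. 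The resulting pairings on stable homotopy groups give $K^\bl(X)=\bigoplus_{i\ge 0}K_i^\bl(X)$ a graded ring structure which, by construction, is the one induced by $\otimes_{\cO_X}$.

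The main obstacle is precisely the bi-exactness of $\otimes_{\cO_X}$ in the blue (here: monoidal) setting: because blue modules need not be closed under addition and the tensor product is formed via a generated pre-addition, it is not formal that tensoring with a locally projective sheaf carries normal short exact sequences to normal short exact sequences, nor that it preserves finite generation and local projectivity. The argument has to be localized to the affine case and there proceed by a concrete analysis of projective modules over monoids with zero. A secondary subtlety, already flagged in the introduction, is to confirm that normal monomorphisms and normal epimorphisms genuinely organize $\Bun X$ into an exact category rather than merely two closed classes of maps --- that is, that the discrepancy between (co)kernels and mono-/epimorphisms does not obstruct the axioms actually required by Quillen's and Waldhausen's constructions.
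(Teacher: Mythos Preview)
The paper does not prove this theorem; it merely records the statement as a summary of results from \cite{CLS12}, so there is no proof in the paper to compare against beyond the preparatory lemma on normal monomorphisms and epimorphisms.

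That said, your proposal contains a genuine gap. You begin by asserting that $\Bun X$ is a Quillen exact category, but Quillen exact categories are additive by definition, and $\Bun X$ is not additive when $X$ is a monoidal scheme. The paper itself emphasizes that the coproduct of blue $B$-modules is the wedge $\bigvee M_i$, not a direct sum; in particular products and coproducts disagree, $\Hom$-sets carry no abelian group structure, and the split-idempotent and biproduct axioms underlying Quillen's framework fail. Consequently the comparison $|Q\Bun X|\simeq\Omega|S_\bullet\Bun X|$ you invoke --- Waldhausen's theorem for exact categories --- is not available off the shelf, and the Elmendorf--Mandell or $S_\bullet$-pairing machinery you cite likewise presupposes structure you do not have.

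The route actually taken in \cite{CLS12}, and signalled by the paper's lemma, is to work with the weaker ``proto-exact'' structure that the lemma encodes: closure of normal monomorphisms and epimorphisms under composition and under base change along the opposite class. This suffices for the $Q$-construction to be well-defined without any additivity, and the passage to a symmetric ring spectrum is then carried out in this non-additive setting, which requires adapting the multiplicative infinite loop machinery rather than quoting it. Your instincts about bi-exactness of $\otimes_{\cO_X}$ and the need to reduce to an affine analysis of projective modules over monoids with zero are correct and are indeed where the work lies; what is missing is the recognition that the ambient categorical framework must be built from scratch in the absence of additivity.
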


Finally, we obtain the expected relation with the stable homotopy of the sphere spectrum\index{Stable homotopy of spheres}.

\begin{thm}[{\cite[Thm.\ 5.9]{CLS12}}]
 The $K$-theory spectrum $\cK^\bl(\Fun)$ is isomorphic to the sphere spectrum $\S$ as a symmetric ring spectrum. In particular, $K_i^\bl(\Fun)=\pi^\st_i(\S)$ is the $i$-th stable homotopy group of the sphere spectrum.
\end{thm}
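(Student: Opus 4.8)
The plan is to compute the category $\Bun(\Spec\Fun)$ by hand, recognize it as the category of finite pointed sets equipped with its natural proto-exact structure, and then deduce the result from the Barratt--Priddy--Quillen theorem together with the $K$-theoretic machinery of \cite{CLS12} recalled above.

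First I would unwind the definitions at the single point of $\Spec\Fun$. A blue $\cO_X$-module on $\Spec\Fun$ is simply a blue $\Fun$-module, and a finitely generated locally projective one is a finitely generated projective $\Fun$-module; since $\Fun$ is a blue field, every such module is free of some finite rank $n$, hence isomorphic to $\bigvee_{i=1}^n\Fun$. Because $\Fun$ carries the trivial pre-addition, the pre-addition on $\bigvee_{i=1}^n\Fun$ is generated by the single relation $\ast\=(\text{empty sum})$, so a morphism of free $\Fun$-modules is precisely a base-point-preserving map of the underlying pointed sets. Hence $\Bun(\Spec\Fun)$ is equivalent to the category $\mathrm{Fin}_\ast$ of finite pointed sets with all pointed maps, and under this equivalence the tensor product $\otimes_\Fun$ corresponds to the smash product of pointed sets.

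Next I would translate the rest of the structure. Computing in $\mathrm{Fin}_\ast$ --- where the cokernel of $f\colon M\to N$ is $N/f(M)$ and the kernel of $g\colon M\to N$ is $g^{-1}(\ast)$ --- one checks that every monomorphism is normal, whereas a normal epimorphism is exactly a surjective pointed map that is injective away from the preimage of the base point. In particular every admissible short exact sequence $N'\rightarrowtail N\twoheadrightarrow N''$ splits, with $N\cong N'\vee N''$, so $\Bun(\Spec\Fun)$ is a split proto-exact category whose two monoidal structures are $\vee$ (coproduct) and $\wedge$ (tensor product). Applying the comparison between Quillen's $Q$-construction on $\Bun X$ and the group completion of its symmetric monoidal groupoid of isomorphisms, together with the construction of the symmetric ring spectrum $\cK^\bl(X)$ and the compatibility of its multiplication with $\otimes_{\cO_X}$ --- all of which is the content of Sections 5.3--5.4 of \cite{CLS12} --- one obtains an equivalence of symmetric ring spectra between $\cK^\bl(\Fun)$ and the $K$-theory spectrum of the symmetric monoidal groupoid $\mathrm{iso}\,\mathrm{Fin}_\ast$ with its compatible smash product. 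Since $\mathrm{iso}\,\mathrm{Fin}_\ast\simeq\coprod_{n\geq0}B\Sigma_n$, with $\vee$ inducing disjoint union and $\wedge$ inducing cartesian product, the Barratt--Priddy--Quillen theorem (\cite{Barratt71}, \cite{Priddy71}) identifies this $K$-theory spectrum with the sphere spectrum $\S$ as a symmetric ring spectrum. Therefore $\cK^\bl(\Fun)\simeq\S$, and consequently $K_i^\bl(\Fun)=\pi_i^\st(\S)$.

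The step I expect to be the main obstacle is this comparison. The category $\Bun(\Spec\Fun)$ is genuinely not an exact category --- not every epimorphism is normal, and the kernels and cokernels that govern normality are those of pointed sets rather than of abelian groups --- so Quillen's classical identification of the $Q$-construction with the plus construction of a symmetric monoidal category is not available off the shelf. One must instead use the additivity and d\'evissage results for the proto-exact categories $\Bun X$, and the multiplicative refinement producing the symmetric ring spectrum $\cK^\bl$, as established in \cite{CLS12}. Granting that input, what remains of the theorem is the combinatorial identification of $\Bun(\Spec\Fun)$ with $\mathrm{Fin}_\ast$ recorded above, together with the symmetric-ring-spectrum version of Barratt--Priddy--Quillen.
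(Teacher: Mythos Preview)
The paper does not prove this theorem; it is quoted verbatim from \cite[Thm.\ 5.9]{CLS12}, and the surrounding text only sets up the definitions and states the result. Your outline---identifying $\Bun(\Spec\Fun)$ with the category of finite pointed sets, recognizing the split proto-exact structure with $\vee$ and $\wedge$, and then invoking Barratt--Priddy--Quillen in its multiplicative form---is exactly the argument of \cite{CLS12} and matches the heuristic the present paper records in Part~\ref{partI} (the chain $K_*(\Spec\Fun)=\pi_*(BS_\infty^+)\simeq\pi_*^\st(\S^0)$). So your proposal is correct and is the same approach as the source the paper cites.
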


\begin{ex}
We conclude this exposition with an example that compares the Grothendieck groups of different $K$-theories. The base extension $X^+_\Z\to X$ of a blue scheme $X$ to $\Rings$ and the inclusion of the category of $\cO_X$-modules into the category of blue $\cO_X$-modules induces ring homomorphisms
\[
 K_0^\bl(X) \quad\longrightarrow\quad K_0^\bl(X^+_\Z) \quad\longrightarrow\quad K_0(X^+_\Z).
\]
We inspect them in case in case of $X=\Spec\Funn$. Then $X^+_\Z$ is the Dedekind scheme $\Spec \Z[\zeta_n]$ where $\Z[\zeta_n]$ is the ring of algebraic integers of the cyclotomic field extension $\Q[\zeta_n]$ of $\Q$.

Since $\Funn$ is a blue field, all locally projective $\cO_X$-modules are free, and thus $K_0^\bl(X)=\Z$. The locally projective blue $\cO_{X^+_\Z}$-modules of rank $1$ coincide with the invertible sheaves on $X^+_\Z$ in the usual sense, while locally projective blue $\cO_{X^+_\Z}$-modules of higher rank decompose into coproducts of invertible sheaves. Therefore $K_0^\bl(X^+_\Z)=\Cl \Q[\zeta_n]\times \Z$ since is the ideal class group $\Cl \Q[\zeta_n]$ of $\Q[\zeta_n]$ is isomorphic to the Picard group of $X^+_\Z$. Thus the above sequence $K_0^\bl(X)\to  K_0^\bl(X^+_\Z)\to K_0(X^+_\Z)$ becomes in this case
\[
 \Z  \quad\longrightarrow\quad \Cl \Q[\zeta_n] \times\Z \quad\stackrel\id\longrightarrow\quad \Cl \Q[\zeta_n] \times\Z.
\]
\end{ex}

\begin{rem}
 There are interesting number theoretic application for a $K$-theory of arithmetic curves\index{Arithmetic curve} $\overok$. One could think about an interpretation of Gauss reciprocity as Weil reciprocity for $\overz$ or about applications in Iwasawa theory (see, for instance, Greither and Popescu's proof of the equivariant main conjecture in \cite{Greither-Popescu11}). 

 It is not clear yet how to define such a $K$-theory. It seems that blue $K$-theory as defined in this section applied to the arithmetic curves $\overok$ from Section \ref{section: the arithmetic line} is not yet the right notion. However, there is hope that a variation of the definition given in this paper will be suited for applications to number theory.
\end{rem}

%%%%%%%%%%%%%%%%%%%%%%%%%%%%%%%%%%%%%%%%%%%%%%%%%%%%%%%%%%%%%%%%%%%%%%%%%%%%%%%%%%%%%%%%%%%%%%%%%%%%%%%%%%%%%%%%%%%%%%%%%%%%%%%%%%%%%%%%%%%%%%%%%%%%%%%%%%%%%%%%%%%%%%%%%%%%%%%%%%%%%%%%%%%%%%%%%%%%%%%%%%%%%%%%%%%%%%%%%
%%%%%%%%%%%%%%%%%%%%%%%%%%%%%%%%%%%%%%%%%%%%%%%%%%%%%%%%%%%%%%%%%%%%%%%%%%%%%%%%%%%%%%%%%%%%%%%%%%%%%%%%%%%%%%%%%%%%%%%%%%%%%%%%%%%%%%%%%%%%%%%%%%%%%%%%%%%%%%%%%%%%%%%%%%%%%%%%%%%%%%%%%%%%%%%%%%%%%%%%%%%%%%%%%%%%%%%%%

\section{Connection to other geometries}
\label{section: connection to other geometries}

In the previous sections, we have already seen that monoidal schemes (cf.\ Section \ref{subsubsection: Relation to usual schemes and monoidal schemes}) are naturally blue schemes and that $\B_1$-algebras and sesquiads are a particular sort of blueprints (cf.\ Section \ref{subsubsection: Idempotent semirings and sesquiads}). There are connections to other geometric theories. 

By name, a \emph{Frobenius scheme} in $\Sch_\Fun$ defines a $\Lambda$-schemes after Borger (see Section \ref{subsection: lambda-schemes}). Blue schemes define scheme relative to the category of blue $\Fun$-modules in the sense of To\"en and Vaqui\'e (see Section \ref{subsection: relative schemes after toen and vaquie}). A \emph{(general) blue scheme} defines a log scheme and there are easily formulated conditions on a general blue scheme that imply that the associated log scheme is a fine log scheme (see Section \ref{subsection: log schemes}). Finally, there is a need for a theory of congruence schemes for blueprints, which unites the different approaches of Berkovich, Deitmar and Lescot. See Section \ref{subsection: congruence schemes} for some remarks on such a theory.

\subsection{$\Lambda$-schemes}
\label{subsection: lambda-schemes}

$\Lambda$-algebraic geometry, as developed by Jim Borger in \cite{Borger11a} and \cite{Borger11b}, has its own viewpoint on $\Fun$-geometry, which is explained in \cite{Borger09}. The idea is to define an $\Fun$-scheme as a scheme $X$ together with a $\Lambda$-structure, which should be thought of as the descend datum to $\Fun$. The base extension of the \emph{$\Lambda$-scheme $X$} from $\Fun$ to $\Z$ is the \emph{scheme $X$}, considered without the $\Lambda$-structure. 

So far, $\Lambda$-structures have been defined for flat schemes only. Therefore, we make the general assumption that all schemes and rings are flat throughout Section \ref{subsection: lambda-schemes}. A \emph{$\Lambda$-structure for a flat scheme $X$}\index{Lambda-structure@$\Lambda$-structure} is a system of pairwise commuting \emph{Frobenius lifts $\psi_p:X\to X$} for every prime $p$, i.e.\ the $\psi_p$ are endomorphisms of $X$ such that the diagrams
\[
 \xymatrix@C=4pc{X\otimes_\Z\F_p \ar[r]^{\Frob_p}\ar[d] & X\otimes_\Z\F_p \ar[d] \\ X \ar[r]_{\psi_p} & X}
\]
commute where $\Frob_p$ is the Frobenius endomorphism of $X\otimes_\Z\F_p$ and the vertical maps are the natural inclusions as fibres at $p$. We will call a flat scheme with a $\Lambda$-structure a \emph{$\Lambda$-scheme}\index{Lambda-scheme@$\Lambda$-scheme}. A morphism of $\Lambda$-schemes is a morphism of schemes that commutes with the endomorphisms $\psi_p$. This defines the category $\Lambda-\Sch^+_\Z$ of (flat) $\Lambda$-schemes.

\begin{ex}
 Every toric variety\index{Toric variety} can be considered as a $\Lambda$-scheme. Let $\Delta$ be a fan. Then we write $A_\tau$ for the multiplicatively written dual cone of a cone $\tau$ of $\Delta$. Let $\Z[A_\tau]$ be the semi-group ring generated by $A_\tau$. Then the association $a\mapsto a^p$ for $a\in A_\tau$ defines a ring endomorphism $F_p:\Z[A_\tau]\to\Z[A_\tau]$ for every prime $p$ and every cone $\tau$. This glues to endomorphisms $\psi_p$ of the toric variety $X(\Delta)=\colim \Spec\Z[A_\tau]$ for every prime $p$. This turns $X(\Delta)$ into a $\Lambda$-scheme.
\end{ex}

 A more general construction of $\Lambda$-schemes is obtained by gluing $\Lambda$-rings along $\Lambda$-morphisms that define open immersions of affine schemes. Namely, a \emph{$\Lambda$-structure for a flat ring}\index{Lambda-structure@$\Lambda$-structure} is a system of pairwise commuting endomorphisms $F_p:R\to R$ whose reduction modulo $p$ is the Frobenius endomorphism $a\mapsto a^p$ on $R/(p)$. A \emph{flat $\Lambda$-ring}\index{Lambda-ring@$\Lambda$-ring} is a flat ring with a $\Lambda$-structure. A homomorphism of flat $\Lambda$-rings is a ring homomorphism that commutes with the endomorphisms $F_p$. The functor $\Spec$ associates to a flat $\Lambda$-ring naturally an affine $\Lambda$-scheme and to a homomorphism of flat $\Lambda$-rings a homomorphism of $\Lambda$-schemes. This allows us to glue spectra of flat $\Lambda$-rings along open immersions that are $\Lambda$-morphisms, which yields a $\Lambda$-scheme.

\begin{ex}[A $\Lambda$-scheme without an affine cover]\label{ex: lambda-scheme without affine cover}
 Note that there are $\Lambda$-schemes that cannot be covered by spectra of flat $\Lambda$-rings. The following example is taken from \cite[para.\ 2.6]{Borger09}. The toric $\Lambda$-structure of the projective line $\SP^1_\Z$ descends to the quotient $X$ of $\SP^1_\Z$ that identifies the torus fixed points $0$ and $\infty$. This defines a $\Lambda$-structure on $X$, but there is no open immersion $\varphi:U\to X$ from an affine $\Lambda$-scheme $U$ such that $\varphi$ is a $\Lambda$-morphism and such that the point $0=\infty$ is in the image of $\varphi$.
\end{ex}

\subsubsection{Frobenius blueprints}
\label{subsubsection: frobenius blueprints}

We will explain the connection of $\Lambda$-rings to blueprints. To keep the picture simple, we assume that all blueprints $B$ are \emph{flat}\index{Blueprint!flat}, i.e.\ the associated ring $B^+_\Z$ is flat. A \emph{Frobenius blueprint}\index{Blueprint!Frobenius blueprint} is a reduced blueprint $B$ with the property that for every prime $p$, a relation $\sum a_i\=\sum b_j$ implies that $\sum a_i^p\=\sum b_j^p$ holds in $B$. Note that this condition is testable on a set $S$ of generators for $\cR=\gen S$. In other words, the multiplicative map $\Frob_p:B\to B$ that sends $a$ to $a^p$ is a blueprint morphism for every prime $p$. We call $\Frob_p$ the \emph{$p$-th power Frobenius of $B$}. We denote the full subcategory of $\bp$ whose objects are Frobenius blueprints by $\Frob-\bp$.

\begin{ex}
 Every monoid with zero is trivially a Frobenious blueprint. 

 A more interesting example is the following. Let $\mu_{n,0}$ be the monoid of the $n$-th roots of unity with zero, i.e.\ $\mu_n=\mu_{n,0}-\{0\}$ is a cyclic group of order $n$. Let $\cR$ be the pre-addition that is generated by the relations
 \[
  \sum_{a\in H} \ a \quad \= \quad \underbrace{1+\dotsb+1}_{\#H-\text{times}}
 \]
 for every subgroup $H$ of $\mu_{n,0}$. Then $B=\bpquot{\mu_{n,0}}{\cR}$ is a Frobenius blueprint. Indeed, since taking the $p$-th power defines a group homomorphism from $\mu_n$ onto some subgroup, the relation $\sum_{a\in H} a^p\=1^p+\dotsb+ 1^p$ is equivalent to a multiple of $\sum_{a\in H'} a\=1+\dotsb+ 1$ for some subgroup $H'$ of $H$.

 Note that this Frobenius blueprint appears as a subblueprint of the big Witt vectors of a field $k$ containing all $n$-th roots of unity. 
\end{ex}

A \emph{Frobenius scheme}\index{Blue scheme!Frobenius scheme} is a blue scheme $X$ such that for every open $U$ of $X$, $\cO_X(U)$ is a Frobenius blueprint. Therefore, a Frobenious scheme $X$ comes together with a family of pairwise commuting endomorphisms $\Frob_p:X\to X$ where $p$ ranges through all prime numbers. We denote the full subcategory of $\Sch_\Fun$ whose objects are Frobenius schemes by $\Frob-\Sch_\Fun$.

\subsubsection{Comparing $\Lambda$-rings and Frobenius blueprints}
\label{subsubsection: comparing lambda-rings and frobenius blueprints}

Let $B$ be a Frobenius blueprint. Then the $p$-th power Frobenius extends uniquely to a ring endomorphism $F_p:B_\Z^+\to B_\Z^+$ for every prime $p$. This defines a $\Lambda$-structure on $B_\Z^+$, which is functorial in $B$. Since $B$ is flat, $B_\Z^+$ is a flat and hence a flat $\Lambda$-ring. This yields a functor
\[
 \Lambda: \quad \Frob-\bp \quad \longrightarrow \quad \Lambda-\Rings.
\]
from the full subcategory of Frobenius blueprints in $\bp$ to the category of $\Lambda$-rings. This functor extends to a functor
\[
 \Lambda: \quad \Frob-\Sch_\Fun \quad \longrightarrow \quad \Lambda-\Sch_\Z^+.
\]
from Frobenius schemes to $\Lambda$-schemes. The image of $\Lambda$ is contained in the full subcategory $\cL$ of $\Lambda$-schemes that have an affine open covering by spectra of $\Lambda$-rings.

Given a flat $\Lambda$-ring $R$ with endomorphisms $F_p$, we define the subset
\[
 A \quad = \quad \{ \ a\in R \ | \ F_p(a)= a^p \text{ for all primes }p\ \},
\]
which contains $0$ and $1$ and is multiplicatively closed. Let $\cR$ be the pre-addition on $A$ that contains all relations $\sum a_i\=\sum b_j$ that hold in $R$. Then $B=\bpquot A\cR$ is a blueprint. Since $R$ is flat, $B$ is flat, and by the very definition of $A$, $B$ is a Frobenius blueprint. This association is functorial in $R$, which yields a functor
\[
 \Frob: \quad \Lambda-\Rings \quad \longrightarrow \quad \Frob-\bp,
\]
which extends to a functor
\[
 \Frob: \quad \cL \quad \longrightarrow \quad \Frob-\BSch.
\]
However, since in general a $\Lambda$-scheme cannot be covered by spectra of $\Lambda$-rings (cf.\ Example \ref{ex: lambda-scheme without affine cover}), this functor fails to extend to the whole category $\Lambda-\Sch_\Z^+$.

Clearly, every Frobenius scheme that comes from a $\Lambda$-scheme is cancellative. Note that not every Frobenius scheme is cancellative, see Example \ref{ex: non-cancellative Frobenius scheme} below. We denote the full subcategory of cancellative Frobenius schemes by $\Frob-\Sch_\Fun^\canc$. The following fact is easily verified.

\begin{prop}
 The composition $\Lambda\circ\Frob$ is isomorphic to the identity functor on $\Frob-\Sch_\Fun^\canc$.
\end{prop}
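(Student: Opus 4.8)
The plan is to exhibit, for every cancellative Frobenius scheme $X$, a natural isomorphism $X\xrightarrow{\sim}\Frob(\Lambda(X))$, where one first forms the flat $\Lambda$-scheme $\Lambda(X)=X^+_\Z$ and then passes to its Frobenius-fixed locus. Because $\Lambda$ and $\Frob$ are obtained by gluing their affine constructions along localizations, and because a cancellative Frobenius scheme has an affine cover by spectra of cancellative Frobenius blueprints, it is enough to work with blueprints: I would construct a natural morphism $\eta_B\colon B\to\Frob(\Lambda(B))$ for every cancellative Frobenius blueprint $B$, prove that it is an isomorphism, and check that it commutes with the localization maps $B\to S^{-1}B$, so that the affine isomorphisms glue to a natural isomorphism of functors.

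To define $\eta_B$, write $R=\Lambda(B)=B^+_\Z$ equipped with the $\Lambda$-structure given by the power Frobenii $F_p$ extending $\Frob_p\colon a\mapsto a^p$ on $B$. Since $B$ is cancellative the canonical map $B\to R$ is injective, and as $F_p(a)=\Frob_p(a)=a^p$ for $a\in B$, the image of $B$ lies inside the submonoid $C:=\{\,c\in R\mid F_p(c)=c^p\text{ for all primes }p\,\}$ underlying $\Frob(R)$. Every relation $\sum a_i\=\sum b_j$ of $B$ holds in $R$, hence belongs to the pre-addition of $\Frob(R)$, so $\eta_B$ is a well-defined morphism of blueprints that is injective on underlying monoids. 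Naturality in $B$ and compatibility with localization then follow from functoriality of $(\blanc)^+_\Z$, $\Lambda$ and $\Frob$ and from the remark that a $\Lambda$-homomorphism restricts to a map $C_B\to C_{B'}$ and agrees with the original morphism on monomials.

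I would then show that $\eta_B$ is \emph{strict}, i.e.\ that it reflects relations: if $\sum a_i\=\sum b_j$ holds in $\Frob(R)$, which says exactly $\sum a_i=\sum b_j$ in $R=B^+_\Z$, then already $\sum a_i\=\sum b_j$ in $B$. This is where cancellativity is used — an identity $\sum a_i-\sum b_j\in I(\cR)$ can be rewritten, using additivity of the pre-addition $\cR$ of $B$ and then cancelling the auxiliary positive summands that occur on both sides, as the relation $\sum a_i\=\sum b_j$ of $\cR$; equivalently, a cancellative blueprint coincides with the sub-blueprint of $B^+_\Z$ on its monomials together with \emph{all} additive relations among monomials that are valid in $B^+_\Z$. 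Consequently $\eta_B$ identifies $B$ with a strict sub-blueprint of $\Frob(\Lambda(B))$, and it will be an isomorphism as soon as it is surjective on monoids, that is, as soon as $C=B$.

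Establishing $C=B$ inside $R=B^+_\Z$ is the heart of the matter: one must prove that every $c\in R$ with $F_p(c)=c^p$ for all primes $p$ is already a monomial of $B$. The approach I would take uses that $R$ is $\Z$-spanned by the monomials $a\in B$, on which $F_p$ acts by $a\mapsto a^p$, so that for a $\Z$-linear combination $c$ the simultaneous identities $F_p(c)=c^p$ (for all $p$) force, once a suitable ``leading monomial'' of $c$ is isolated, that $c$ is a single monomial with coefficient $1$; the reducedness, flatness and cancellativity of $B$ are exactly what prevents the relations of $B$ from creating extra Frobenius-fixed elements. Granting $C=B$, the morphism $\eta_B$ is an isomorphism of blueprints, and gluing over an affine cover produces the natural isomorphism $X\cong\Frob(\Lambda(X))$, which is the claim. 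I expect essentially all of the difficulty to be concentrated in this surjectivity step, and it is precisely there that the hypotheses defining $\Frob-\Sch_\Fun^\canc$ enter.
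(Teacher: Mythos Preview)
The paper offers no argument beyond ``easily verified,'' so there is little to compare against; your overall architecture---reduce to affines, show that $\eta_B\colon B\to\Frob(\Lambda(B))$ is injective and reflects relations (both of which you correctly derive from cancellativity), and then establish surjectivity of the monoid map---is the natural one and is presumably what the author has in mind.

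The genuine gap is the surjectivity step, and it is more serious than you indicate. Your ``leading monomial'' sketch works when the underlying monoid is free (polynomial rings), but there is no monomial ordering to appeal to once the monoid has a nontrivial idempotent. Consider $B=\{0,1,e\}$ with $e^2=e$ and trivial pre-addition. This blueprint is reduced (no nilpotents), flat ($B^+_\Z\cong\Z\times\Z$), cancellative (the map $B\hookrightarrow B^+_\Z$ is injective), and Frobenius (there are no nontrivial additive relations to check). The induced $F_p$ on $\Z\times\Z$ is the identity for every $p$, so
\[
C \;=\; \{\,c\in\Z\times\Z \mid F_p(c)=c^p \text{ for all primes } p\,\} \;=\; \{(0,0),(1,0),(0,1),(1,1)\},
\]
which strictly contains the image $\{(0,0),(1,1),(0,1)\}$ of $B$. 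The element $1-e=(1,0)$ satisfies $F_p(1-e)=(1-e)^p$ for all $p$ but is not a monomial of $B$. Hence $\eta_B$ is not surjective. Your sentence ``the reducedness, flatness and cancellativity of $B$ are exactly what prevents the relations of $B$ from creating extra Frobenius-fixed elements'' misidentifies the obstruction: the extra element here comes from the multiplicative structure (an idempotent in the monoid), not from the relations.

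So either the proposition tacitly assumes more than is written (for instance that the underlying monoid be integral, or that $B^+_\Z$ be a domain, or that one restricts to connected schemes), or ``reduced'' is intended in a stronger sense than ``no nilpotents.'' In any case, the surjectivity you flag as the heart of the matter is not merely technically demanding---under the hypotheses as you have read them it can fail outright, and your sketch does not address this phenomenon.
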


However, there are $\Lambda$-schemes in $\cL$ that do not come from Frobenius schemes, see Example \ref{ex: the chebychev line}. This can be given the following interpretation: $\Fun$-geometry (with Frobenius actions) is a part of $\Lambda$-algebraic geometry, but $\Lambda$-algebraic geometry is richer than $\Fun$-geometry.

\begin{ex}[Non-cancellative Frobenius blueprints]\label{ex: non-cancellative Frobenius scheme}
 Let $\B_1=\bpgenquot{\{0,1\}}{1+1\=1}$ be the idempotent semi-ring with two elements. Then $\B_1$ is obviously a Frobenius blueprint. Note that $(\B_1)^+_\Z$ is the zero ring, which means that $\Lambda\circ\Frob(\B_1)$ is the zero blueprint. More generally, every blueprint of the form $B=\bpgenquot{A}{1+1\=1}$ is a Frobenius blueprint whose associated $\Lambda$-ring is the zero ring.
\end{ex}

\begin{ex}[The Chebychev line]\label{ex: the chebychev line}
 Consider the action $t\mapsto t^{-1}$ on the ring $\Z[t^{\pm 1}]$. The invariant subring is $\Z[x]$ with $x=t+t^{-1}$. We define for every prime $p$ the endomorphism $F_p$ by $F_p(x)=t^p+t^{-p}$, which is a polynomial in $x$. This endows the affine line $\SA^1_\Z=\Spec\Z[x]$ with a $\Lambda$-structure, and this $\Lambda$-scheme $X$ is called the \emph{Chebychev line}\index{Chebychev line}, cf.\ Section 2.5 in \cite{Borger09}.

 The only elements $a$ of $\Z[x]$ that satisfy $F_p(a)=a^p$ are $0$ and $1$. Therefore $\Frob (X)=\A^0_\Fun$. Since $\Lambda(\A^0_\Fun)=\SA^0_\Z$ (together with its unique $\Lambda$-structure), this shows that not every $\Lambda$-scheme with a covering by affine $\Lambda$-schemes comes from a Frobenius scheme.
\end{ex}

\subsection{Relative schemes after To\"en-Vaqui\'e}
\label{subsection: relative schemes after toen and vaquie}

 To\"en and Vaqui\'e develop in \cite{Toen-Vaquie09} the notion of a scheme relative to any complete and cocomplete closed symmetric monoidal category $\cC$. If $\cC$ is understood, we say briefly \emph{relative scheme} for a scheme relative to $\cC$. An \emph{affine relative scheme} is an object of the dual category $\Aff(\cC)$ of the category $\Comm(\cC)$ of commutative monoids in $\cC$. Let $\spec:\Comm(\cC)\to\Aff(\cC)$ be the contravariant isomorphism. A morphism is an open immersion if it is dual to an epimorphism $f:B\to C$, such that $\blanc\otimes_BC$ commutes with finite limits and colimits ($f$ is \emph{flat}\index{Morphism!flat}) and the canonical map
 \[
 \Psi_\cD: \quad \colim \ \Hom_B (C,\cD) \quad \longrightarrow \quad \Hom_B(C,\colim \cD)
 \]
 is a bijection for every directed system $\cD$ in $\cC$ ($f$ is \emph{of finite presentation}\index{Morphism!of finite representation}).

 A family $\{\spec B_i\to \spec B\}_{i\in I}$ of open immersions is a \emph{covering of $\spec B$} if there is a finite subset $J\subset I$ such that the functor
\[
 \prod_{j\in J} \ \blanc\otimes_BB_j \ : \quad \Mod B \quad \longrightarrow \quad \prod_{j\in J} \ \Mod B_j
\]
is conservative (i.e.\ $f:M\to N$ is an isomorphism if $\Phi(f)$ is an isomorphism). This endows $\Aff(\cC)$ and with the structure of a site. A \emph{scheme relative to $\cC$}\index{Relative scheme} is a sheaf on the site $\Aff(\cC)$ that can be covered by open affine relative subschemes. We denote the category of schemes relative to $\cC$ by $\Sch(\cC)$.

 For instance, we can associate to a Grothendieck scheme $X$ over a ring $R$ the functor $h_X=\Hom_R(\blanc,X)$ of points, which is a sheaf on the Zariski site of affine schemes. This establishes an equivalence between the category of Grothendieck schemes over $R$ and schemes relative to the category of $R$-modules (this follows from the functorial viewpoint of schemes as developed in Demazure and Gabriel's book \cite{Demazure-Gabriel70}). Similarly, a monoidal scheme $X$ defines a sheaf $h_X=\Hom(\blanc,X)$ on the Zariski site of affine monoidal schemes (after Deitmar, \cite{Deitmar05}). This establishes an equivalence between monoidal schemes and schemes relative to the category of sets (as proven by Vezzani in  \cite{Vezzani12}).

 These equivalences find a common generalization in the theory of blueprints, though it is not anymore true for a general blue scheme $X$ that the functor $h_X=\Hom_B(\Spec\Gamma(\blanc),X)$ is a scheme relative to $\Mod B$. The reason for this is that we encounter two different Zariski sites on (the dual of) the category of blueprints: the Zariski site coming from $\Sch_\Fun$ is finer than the Zariski site coming from $\Sch(\Mod B)$. Therefore the identity functor $\bp$ defines a morphism from the latter site to the former site, which induces a functor $\cG:\Sch(\Mod B)\to\Sch_\Fun$, but not vice versa.

 However, we can define a right inverse $\cF$ on a certain subcategory of $\Sch_\Fun$, which extends the functor of points for usual schemes and monoidal schemes. Namely, we say that a morphism $f:X\to Y$ between affine blue schemes is a \emph{finite localization} if the morphism $\Gamma f:\Gamma Y\to \Gamma X$ is the localization $B\to S^{-1} B$ of $B=\Gamma Y$ at a finitely generated multiplicative subset $S$ of $B$. An affine open covering $\{U_i\to X\}$ of an affine blue scheme $X$ is \emph{conservative} if the functor $\Mod \Gamma X \quad \longrightarrow \quad \prod_{j\in I} \ \Mod \Gamma U_i$ is conservative. An affine blue scheme $X$ is \emph{with an algebraic basis} if every covering of $X$ is conservative and if the affine open subsets $U$ of $X$ that are finite localizations form a basis of the topology of $X$. 

 An \emph{algebraic presentation} of a blue scheme $X$ is a diagram $\cU$ of affine open subschemes $U_i$ of $X$ together with their inclusion maps such that all $U_i$ are with an algebraic basis, such that all inclusions $U_i\hookrightarrow U_j$ are finite localizations, such that every $U_i$ is contained in a maximal $U_j$ in $\cU$ and such that $X$ is the colimit of $\cU$. A blue scheme $X$ is \emph{algebraically presented} if it has an algebraic presentation. We denote the full subcategory of $\Sch_\Fun$ whose objects are algebraically presented blue schemes by $\Sch_\Fun^\alg$.

 Examples of algebraically presented blue schemes are usual schemes, monoidal schemes and all blue schemes of finite type over $\Fun$. 

 Note that if $\cU$ is an algebraic presentation of $X$, then applying the global sections functor $\Gamma$ and $\spec$ to each object and each morphism in $\cU$, we obtain a diagram $\spec\Gamma\cU$ of affine relative schemes. We summarize the results of Sections 9--11 in \cite{L12b}. 

 \begin{thm}
  Let $X$ be a blue scheme with algebraic atlas $\cU$. Then the colimit of the diagram $\spec\Gamma\cU$ in $\Sch(\Mod\Fun)$ exists. This defines naturally a fully faithful functor $\cF:\Sch^\alg_\Fun\to\Sch(\Mod\Fun)$. The composition $\cG\circ\cF:\Sch_\Fun^\alg\to\Sch_\Fun$ is isomorphic to the embedding of $\Sch_\Fun^\alg$ as a subcategory in $\Sch_\Fun$.
 \end{thm}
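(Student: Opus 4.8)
The plan is to reduce the statement to an affine local comparison and then to glue. The crucial local fact is this: if $U$ is an affine blue scheme with an algebraic basis, then the affine relative scheme $\spec\Gamma U\in\Aff(\Mod\Fun)\subseteq\Sch(\Mod\Fun)$ satisfies $\cG(\spec\Gamma U)\simeq U$, and every finite localization $\Gamma U\to S^{-1}\Gamma U$ is dual to an open immersion of relative schemes. Granting this, the diagram $\spec\Gamma\cU$, obtained by applying $\spec\circ\,\Gamma$ to the poset $\cU$, is a diagram of affine relative schemes whose transition maps are open immersions, indexed by a poset in which every object lies below a maximal one; it is therefore a gluing datum in $\Sch(\Mod\Fun)$, and its colimit is obtained by gluing the representable sheaves $h_{\spec\Gamma U_i}$ along the open subfunctors $h_{\spec\Gamma U_i}\hookrightarrow h_{\spec\Gamma U_j}$. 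By construction this sheaf on $\Aff(\Mod\Fun)$ is covered by affine opens, hence lies in $\Sch(\Mod\Fun)$, and we define $\cF(X)$ to be it.

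First I would verify that finite localizations are open immersions of relative schemes. For a finitely generated multiplicative subset $S=\langle s_1,\dots,s_n\rangle$ of $B=\Gamma U$, the map $B\to S^{-1}B$ is an epimorphism in $\Comm(\Mod\Fun)$, which is the category $\bp$ of blueprints; the base change $\blanc\otimes_B S^{-1}B$ is again a localization and hence commutes with all finite limits and colimits, so the map is flat; and it is of finite presentation because a $B$-algebra morphism out of $S^{-1}B$ into a filtered colimit $\colim\cD$ exists precisely when all $s_i$ become invertible in $\colim\cD$, which, there being only finitely many of them, already happens at some stage of $\cD$. This last point is exactly where the word \emph{finite} matters: an infinitely generated $S$ would destroy the finite-presentation condition, which is the reason $\Sch(\Mod\Fun)$ cannot see arbitrary localizations and why the algebraic-atlas hypothesis has to be imposed at all.

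For the local comparison $\cG(\spec\Gamma U)\simeq U$, I would use that $\cG$ is induced by the continuous identity functor from the coarser relative Zariski site to the finer $\Sch_\Fun$-Zariski site on the dual of $\bp$; concretely $\cG$ amounts to sheafification for the finer topology, is left adjoint to the inclusion of the finer sheaves into the coarser ones, and therefore preserves colimits. On $\spec\Gamma U$ this sheafification recovers $U$ because the two conditions packaged into ``affine with an algebraic basis'' say precisely that the finite localizations of $\Gamma U$ form a basis of the topology of $U$ and that every affine open covering of $U$ is conservative; together these force the relative Zariski coverings of $\spec\Gamma U$ to generate the $\Sch_\Fun$-topology locally, so passing to the finer topology changes nothing. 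Since $\cG$ preserves colimits and $X=\colim\cU$ in $\Sch_\Fun$, we obtain
\[
\cG\bigl(\cF(X)\bigr)\;=\;\cG\bigl(\colim_{\cU}\spec\Gamma U_i\bigr)\;\simeq\;\colim_{\cU}\cG(\spec\Gamma U_i)\;\simeq\;\colim_{\cU}U_i\;=\;X,
\]
which is the last assertion of the theorem. It also shows that $\cF(X)$ is independent of the chosen presentation: two algebraic presentations of $X$ admit a common refinement, namely the affine opens that are finite localizations inside members of both, and refining an atlas does not change the glued relative scheme because its affine pieces are the same. Functoriality of $\cF$ is handled by refining the atlas of $X$ so that $f\colon X\to Y$ carries each $U_i$ into some member $V_{j(i)}$ of the atlas of $Y$; then $\Gamma(f)$ gives blueprint morphisms $\Gamma V_{j(i)}\to\Gamma U_i$, hence morphisms $\spec\Gamma U_i\to\spec\Gamma V_{j(i)}$ compatible with the gluing data, which assemble to $\cF(f)$, with well-definedness again following from refinement-invariance.

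It remains to check that $\cF$ is fully faithful. Applying $\cG$ gives a map $\Hom_{\Sch(\Mod\Fun)}(\cF X,\cF Y)\to\Hom_{\Sch_\Fun}(X,Y)$, which is surjective by the construction of $\cF$ on morphisms (one verifies $\cG\cF(g)=g$), and injective because a morphism out of the relative scheme $\cF X$ is determined by its restrictions to the affine cover $\{\spec\Gamma U_i\}$, while on each affine piece a morphism $\spec\Gamma U_i\to\spec\Gamma V_j$ of affine relative schemes is the same datum as a blueprint morphism $\Gamma V_j\to\Gamma U_i$, i.e., since $U_i$ and $V_j$ are affine with algebraic bases, the same as a morphism $U_i\to V_j$ of blue schemes. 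I expect the main obstacle to be the local comparison $\cG(\spec\Gamma U)\simeq U$: the rest of the argument is formal gluing and adjointness, but this step needs a careful comparison of the two Grothendieck topologies on the dual of $\bp$ together with the verification that ``affine blue scheme with an algebraic basis'' is exactly the condition under which the coarser relative Zariski coverings generate the finer $\Sch_\Fun$-topology locally.
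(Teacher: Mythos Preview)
The paper gives no proof of this theorem; it simply records it as a summary of Sections~9--11 of \cite{L12b}. There is therefore no in-paper argument to compare against. Your outline is a reasonable reconstruction of what such a proof must do, and the two load-bearing local facts you isolate---that a finite localization $B\to S^{-1}B$ is a Zariski open immersion in the To\"en--Vaqui\'e sense, and that the ``algebraic basis'' hypothesis is precisely the condition under which the two Zariski topologies agree on $\Spec B$---are the right ones. Your argument for flatness and finite presentation of $B\to S^{-1}B$ is correct and pinpoints why finiteness of $S$ is essential.

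One point deserves more care. You describe $\cG$ as $\tau_{\text{fine}}$-sheafification and invoke its left-adjointness to move the colimit past $\cG$. That is correct at the level of the ambient sheaf topoi, but $\cG$ as stated lands in $\Sch_\Fun$, which is defined in this paper as a category of locally blueprinted spaces, not as a subcategory of sheaves. The missing step is to identify blue schemes with their functors of points on the fine site and to check that the gluing colimit $\colim_{\cU}U_i$ in $\Sch_\Fun$ agrees with the colimit of the corresponding representable sheaves; for gluing along open immersions this is true, but it should be said rather than absorbed into the word ``sheafification.'' Relatedly, the conservativity clause in the definition of ``algebraic basis'' is what guarantees that finite-localization covers of $\Spec B$ are genuine covers in the To\"en--Vaqui\'e topology (whose covers are \emph{defined} by conservativity of the base-change functor); you mention this condition but do not use it explicitly, and it is exactly what is needed for the local comparison $\cG(\spec\Gamma U)\simeq U$.
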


 Denote by $\Gamma\bp^\alg$ the full subcategory of $\bp$ whose objects are global blueprints $B$ such that $\Spec B$ is with an algebraic basis. Monoids and rings are examples of such blueprrints.

 \begin{cor}
  If we restrict $\cG$ to the essential image of $\cF$, then $\cF$ is a right inverse of $\cG$. As functors on $\Gamma\bp^\alg$, $\cF(X)$ and  $h_X=\Hom_B(\Spec\Gamma(\blanc),X)$ are isomorphic for every algebraically presented blue scheme $X$.
 \end{cor}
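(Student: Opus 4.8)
The plan is to derive both assertions from the preceding theorem --- which gives that $\cF\colon\Sch_\Fun^\alg\to\Sch(\Mod\Fun)$ is fully faithful and that $\cG\circ\cF$ is isomorphic to the inclusion $\Sch_\Fun^\alg\hookrightarrow\Sch_\Fun$ --- together with the elementary fact that a one-sided inverse of an equivalence is automatically two-sided.

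For the first assertion, let $\cC'$ denote the essential image of $\cF$. Since $\cG\cF(X)\cong X$ lies in $\Sch_\Fun^\alg$ for every algebraically presented $X$, the restriction $\cG|_{\cC'}$ corestricts to a functor $\cG'\colon\cC'\to\Sch_\Fun^\alg$, and the theorem gives $\cG'\circ\cF\cong\id_{\Sch_\Fun^\alg}$; that is, $\cF$ is a section of $\cG'$, hence a right inverse of $\cG$ once $\cG$ is restricted to $\cC'$. Moreover $\cF$ is, by definition of $\cC'$, a fully faithful and essentially surjective functor $\Sch_\Fun^\alg\to\cC'$, hence an equivalence; if $\cF^{-1}$ is a quasi-inverse, then $\cG'\cong\cG'\cF\cF^{-1}\cong\cF^{-1}$, so also $\cF\cG'\cong\cF\cF^{-1}\cong\id_{\cC'}$, and $\cF$ and $\cG|_{\cC'}$ are in fact mutually quasi-inverse.

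For the second assertion I would evaluate $\cF(X)$ on affine relative schemes $\spec B$ with $B\in\Gamma\bp^\alg$ (recall that affine relative schemes over $\Mod\Fun$ are spectra of blueprints). Fix an algebraic presentation $\cU=\{U_i\}$ of $X$, so each $U_i$ is affine and with an algebraic basis, each inclusion $U_i\hookrightarrow U_j$ is a finite localization, $X=\colim\cU$ in $\Sch_\Fun$, and by construction $\cF(X)=\colim_i\spec\Gamma U_i$ in $\Sch(\Mod\Fun)$. Because $U_i=\Spec\Gamma U_i$ is global (Theorem \ref{thm: globalization induces an isomorphism of spectra}), for $B\in\Gamma\bp^\alg$ the representable sheaf $\spec\Gamma U_i$ takes the value $\Hom_\bp(\Gamma U_i,B)=\Hom_{\Sch_\Fun}(\Spec B,U_i)=h_{U_i}(\spec B)$ at $\spec B$, so the ``affine pieces'' of $\cF(X)$ and of $h_X$ already agree on $\Gamma\bp^\alg$. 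It then remains to pass from the affine pieces to the colimit: a morphism $\Spec B\to X$ with $B\in\Gamma\bp^\alg$ can, using quasi-compactness of $\Spec B$ and its algebraic basis, be restricted along a finite cover of $\Spec B$ by finite localizations $\Spec S_k^{-1}B$, each landing in some member of $\cU$, which is exactly the data of a compatible family in the presheaf colimit and hence an element of $\cF(X)(\spec B)$; conversely a section of $\cF(X)$ over $\spec B$ is, locally for the coarser $\Sch(\Mod\Fun)$-topology on $\Spec B$, a family of maps to the $U_i$, and these glue to a single map $\Spec B\to X$ since $X=\colim\cU$. Naturality in $B$ is routine, giving $\cF(X)\cong h_X$ on $\Gamma\bp^\alg$.

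The step I expect to be the main obstacle is this last one, namely controlling the sheafification hidden in $\colim_i\spec\Gamma U_i$: one must verify that on the subcategory $\Gamma\bp^\alg$ the $\Sch(\Mod\Fun)$-Zariski topology and the $\Sch_\Fun$-Zariski topology generate the same covering sieves --- which is precisely where the hypothesis that $\Spec B$ be with an algebraic basis is used, every covering there being conservative and refinable by finite localizations --- so that the descent condition defining $\cF(X)(\spec B)$ coincides with the gluing that computes $\Hom_{\Sch_\Fun}(\Spec B,X)$. The accompanying quasi-compactness and gluing bookkeeping needed to realize an arbitrary morphism $\Spec B\to X$ through the diagram $\cU$, and to check that the two resulting identifications are compatible and natural, is the more routine part.
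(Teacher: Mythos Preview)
The paper does not give a proof of this corollary; both the preceding theorem and this corollary are stated without argument as a summary of results from \cite{L12b} (Sections 9--11). There is therefore no proof in the paper to compare your attempt against.

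That said, your approach is sound. For the first assertion you in fact prove more than is claimed: the corollary only asserts that $\cF$ is a \emph{right} inverse of $\cG|_{\cC'}$, i.e.\ that $\cG\circ\cF\cong\id$, which is literally the content of the preceding theorem once one notes that the restriction to $\cC'$ makes the codomain of $\cG$ match the domain of $\cF$. Your further deduction that $\cF$ and $\cG|_{\cC'}$ are mutually quasi-inverse is correct standard category theory and a harmless strengthening.

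For the second assertion your strategy is the natural one and you have correctly located the technical crux. The affine comparison $\spec\Gamma U_i\cong h_{U_i}$ on $\Gamma\bp^\alg$ is exactly Yoneda plus globality of $\Gamma U_i$. The passage to the colimit is where the hypothesis on $B$ enters: the paper explicitly notes that the Zariski site coming from $\Sch_\Fun$ is strictly finer than that from $\Sch(\Mod\Fun)$ in general, and the definition of $\Gamma\bp^\alg$ (global $B$ with $\Spec B$ having an algebraic basis, so every covering is conservative and refinable by finite localizations) is precisely what forces the two site structures to agree there. Your identification of this sheafification-comparison step as the main obstacle is accurate; the remaining gluing and naturality bookkeeping is routine once that is in hand.
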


\begin{comment}
 The situation for blueprints is different. A blue scheme $X$ over a base blueprint $B$ defines the functor $h_X=\Hom_B(\Spec\Gamma(\blanc),X)$ on $\Aff(\Mod B)$ where $\Gamma(\spec C)=C$. In analogy with monoids and rings, this defines a relative scheme. More precisely, we have:

 \begin{thm}[{\cite[Thm.\ 7]{L12b}}]
  The association $X\to h_X$ defines a fully faithful embedding $\iota:\Sch_B\to \Sch(\Mod B)$.
 \end{thm}

 However, $\iota$ fails to be essentially surjective for several reasons. First of all, the functor $\Spec:\bp\to \Sch_\Fun$ fails to be an equivalence between blueprints and affine blue schemes, but affine blue schemes are equivalent to global blueprints. This means that for a $B$-algebra $C$, the affine relative scheme $h_C=\Hom_B(C, \Gamma(\blanc))$ is in the essential image of $\iota$ if and only if $C$ is a global blueprint.
 
 A second source for relative schemes that do not come from blue schemes are open affine subsets that cannot be covered by principal open subsets, i.e.\ by open affine subschemes that are defined by a localization of rings. Examples are the morphisms $\Fun\to\Funsq$, $\N\to \Z$ and $\Q_{\geq0}\to \Q$. An example of a relative scheme that is not a blue scheme is the colimit of the diagram
 \[
  \xymatrix{\Spec \Fun[T] & \Spec \Fun[\pm T] \ar[l]\ar[r] & \Spec\Fun[-T]}
 \]
 in $\Sch(\Mod \Fun)$. This defines a relative scheme $X$ as the sheafification of the presheaf that sends an affine scheme $\spec C$ to the amalgam $(+C)\amalg_{C^\pm}(-C)$ of two copies $+C$ and $-C$ that are glued along the subset of elements of $C$ that have an additive inverse.

 Similarly, the functor $X^+$ that is the sheafification of the presheaf $\spec R\mapsto(+R)\amalg_{R^\pm}(-R)$ (for semirings $R$) is a scheme relative to the category of $\N$-modules, but it is not isomorphic to the functor of points $h_Y$ of a semiring scheme $Y$.
%\end{comment}

\subsection{Log schemes}
\label{subsection: log schemes}

Blueprints and blue schemes give rise to log schemes. More precisely, a blueprint defines a chart for an affine log structure. We will explain this connection in the following, with an emphasis on fine log schemes, which play a central role in logarithmic algebraic geometry.

\subsubsection{Fine log structures}

We recall the definition of fine log structures from Kato's paper \cite{Kato89}. As usual, monoids are commutative and written multiplicatively. A monoid morphism sends $1$ to $1$. A \emph{pre-log structure on a scheme $X$}\index{Pre-log structure} is a sheaf $\cM$ of monoids on the \'etale site $X_\et$ of $X$ together with a morphism $\alpha:\cM\to\cO_X$ of sheaves of monoids. A \emph{morphism $(X,\cM)\to(Y,\cN)$ of schemes $X$ and $Y$ with pre-log structures $\alpha:\cM\to\cO_X$ resp.\ $\beta:\cN\to\cO_Y$}\index{Morphism!of pre-log structures} is a pair $(f,h)$ of a morphism $f:X\to Y$ of schemes and a morphism $h:f^\ast(\cN)\to\cM$ of sheaves such that the diagram
\[
 \xymatrix{f^\ast(\cN) \ar[r]^h \ar[d]_{f^\ast\!\beta} & \cM\ar[d]^\alpha \\ f^\ast(\cO_Y) \ar[r] & \cO_X}
\]
commutes. A \emph{log structure}\index{Log structure} is a pre-log structure $\alpha:\cM\to\cO_X$ such that $\alpha^{-1}(\cO_X^\times)\to\cO_X^\times$ is an isomorphism.  Given a pre-log structure, we define the \emph{log structure associated with $\cM$} as the pushout $\cM^a$ of the diagram
\[
 \xymatrix{\alpha^{-1}(\cO_X^\times) \ar[r] \ar[d] & \cM \\ \cO_X^\times}.
\]
Then $\cM^a$ together with the induced morphism $\tilde\alpha:\cM^a\to\cO_X$ is indeed a log structure.

Let $P$ be a monoid. A monoid morphism $P\to \Gamma(X,\cO_X)$ is the same as a morphism $\cP_X\to\cO_X$ of sheaves of monoids where $\cP_X$ is the constant sheaf associated with $P$. A \emph{chart of a log structure $\alpha:\cM\to\cO_X$}\index{Chart of a log structure} is a monoid morphism $P\to\Gamma(X,\cO_X)$ such that $\alpha:\cM\to\cO_X$ is isomorphic to the log structure associated with the corresponding morphism $\cP_X\to\cO_X$ of sheaves of monoids.

A chart $P\to\Gamma(X,\cO_X)$ is \emph{finitely generated}\index{Chart of a log structure!finitely generated} if $P$ is so. A chart $P\to\Gamma(X,\cO_X)$ is \emph{integral}\index{Chart of a log structure!integral} if $P$ is so, i.e.\ $ab=ac$ implies $b=c$ for all elements $a,b,c\in P$. A chart $P\to\Gamma(X,\cO_X)$ is \emph{affine}\index{Chart of a log structure!affine} if $X$ is affine. 

A \emph{fine log structure}\index{Fine log structure} is a log structure that has \'etale locally affine finitely generated and integral charts. A \emph{(fine) log scheme}\index{Fine log scheme}\index{Log scheme} is a scheme together with a (fine) log structure. We denote the category of log schemes by $\Sch_\Z^\log$.

\subsubsection{General blueprints}

A \emph{general blueprint}\index{Blueprint!general} is a monoid $A$ together with an equivalence relation $\cR$ on $\N[A]$ that is additive and multiplicative. Note that this is Axiom \eqref{ax1} in the definition of a pre-addition in Section \ref{subsection: blueprints}. We do not impose Axioms \eqref{ax2} (existence of a zero) and \eqref{ax3} (properness). To avoid confusion, we adopt the convention of \cite{blueprints1} and call blueprints in the sense of this text \emph{proper blueprints with zero}\index{Blueprint!proper with zero} throughout the rest of Section \ref{subsection: log schemes}.

We extend the notation $B=\bpquot A\cR$ from proper blueprints with zero to general blueprints. A \emph{morphism $f:B_1\to B_2$ of general blueprints}\index{Morphism!of general blueprints} is a multiplicative map such that $\sum a_i\=\sum b_j$ in $B_1$ implies $\sum f(a_i)\=\sum f(b_j)$ in $B_2$. The universal ring $B^+_\Z$ of a general blueprint $B=\bpquot A\cR$ is defined in the same way as for proper blueprints with zero, namely, as the quotient of $\Z[A]$ by the ideal $\cI(\cR)=\{\sum a_i-\sum b_j|\sum a_i\=\sum b_j\}$ This comes together with a blueprint morphism $B\to B^+_\Z$ that sends an element $a$ of $B$ to its class in $B^+_\Z$.

As explained in \cite{blueprints1}, the theory of blue schemes extends to the definition of a \emph{general blue scheme}\index{Blue scheme!general}.

\subsubsection{The log scheme associated with a general blue scheme}

Let $B=\bpquot A\cR$ be a general blueprint and $X=\Spec B^+_\Z$. This defines the log structure $\alpha:\cM\to\cO_X$ associated with the monoid morphism $A\to B^+_\Z=\Gamma(X,\cO_X)$. 

If $X$ is a general blue scheme, then an open affine covering $\{U_i\}$ of $X$ by spectra of blueprints $B_i=\bpquot{A_i}{\cR_i}$ yield charts $A_i\to \Gamma(U_{i,\Z}^+,\cO_{U_{i,\Z}^+})$ for an associated log structure $\alpha_i:\cM_i\to \cO_{U_{i,\Z}^+}$ of $U_{i,\Z}^+$. These log structures glue to a log structure $\alpha:\cM\to X^+_\Z$ of $X$. This association is functorial, i.e.\ we obtain a functor $\log: \Sch_\Fun\to\Sch_\Z^\log$ from general blue schemes to log schemes. 

In order to characterize some properties of this functor, we introduce the following definitions. A general blueprint $B$ is \emph{strictly integral}\index{Blueprint!strictly integral} if $ab=ac$ implies $b=c$ for any $a,b,c\in B$. Note that a strictly integral blueprint cannot be proper with a zero, which explains the necessity to consider general blueprints in this context. A general blueprint $B$ is \emph{strictly finitely generated}\index{Blueprint!strictly finitely generated} if the underlying monoid is finitely generated. A blue scheme $X$ is \emph{strictly integral}\index{Blue scheme!strictly integral} (or \emph{locally strictly of finite type}\index{Blue scheme!strictly of finite type}) if for every affine open $U=\Spec B$ of $X$, the blueprint $B$ is strictly integral (resp.\ strictly finitely generated).

\begin{thm}
 The essential image of $\log: \Sch_\Fun\to\Sch_\Z^\log$ is the subcategory of log schemes with a Zariski local covering by charts. If $X$ is a general blue scheme that is strictly integral and locally strictly of finite type, then $\log (X)$ is a fine log scheme with a Zariski local covering by finitely generated and integral charts.
\end{thm}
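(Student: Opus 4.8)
The statement decomposes into two essentially independent claims, and I would treat them separately. For the description of the essential image, the inclusion ``$\subseteq$'' is a matter of unwinding the definition of $\log$. If $X$ is a general blue scheme and $\{U_i=\Spec B_i\}$ with $B_i=\bpquot{A_i}{\cR_i}$ is an affine open covering, then $\log(X)$ has underlying scheme $X^+_\Z$, the family $\{U_{i,\Z}^+=\Spec B^+_{i,\Z}\}$ is a Zariski open covering of $X^+_\Z$, and by construction the restriction of the log structure of $\log(X)$ to each $U_{i,\Z}^+$ is the one associated to the monoid morphism $A_i\to B^+_{i,\Z}=\Gamma(U_{i,\Z}^+,\cO)$, i.e.\ to a chart. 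Hence $\log(X)$ admits a Zariski-local covering by charts. For the reverse inclusion I would start from a log scheme $(Y,\cM)$ with a Zariski open covering $\{V_i\}$, which I may assume affine, $V_i=\Spec R_i$, and charts $P_i\to R_i$ for $\cM|_{V_i}$. The plan is to attach to each $V_i$ a general blueprint $B_i$ with $B^+_{i,\Z}\cong R_i$ whose associated log structure is $\cM|_{V_i}$, and then to glue the $\Spec B_i$ along the overlaps using the transition data of $Y$. The natural candidate is $B_i=\bpquot{A_i}{\cR_i}$, where $A_i$ is the submonoid of $R_i$ generated by the image of $P_i$ together with the units $R_i^\times$ (and, if necessary, by a ring-generating subset of $R_i$), and $\cR_i$ consists of all relations $\sum a_k\=\sum b_l$ between elements of $A_i$ that hold in $R_i$.

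The verification then splits into: (i) $B^+_{i,\Z}\cong R_i$, which holds once $A_i$ generates $R_i$ as a ring and $\cR_i$ is chosen as above; (ii) the log structure associated to $A_i\to R_i$ coincides with $\cM|_{V_i}$, for which I would use the standard fact that enlarging a chart by units does not change the associated log structure; and (iii) compatibility on overlaps, i.e.\ that the transition isomorphisms of $Y$ carry the $A_i$, and hence the $B_i$, into one another, so that the $\Spec B_i$ glue to a general blue scheme $X$ with $\log(X)\cong(Y,\cM)$. I expect step (ii) to be the main obstacle: one must arrange that the monoid $A_i$ is simultaneously large enough to generate the coordinate ring and ``small enough'' not to distort the prescribed log structure, and it is precisely here that the exact form of the characterization, together with a judicious choice of the representative charts $P_i$, has to be pinned down. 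The key auxiliary input is the invariance of the associated log structure under adjoining units to the chart.

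For the second part --- $X$ strictly integral and locally strictly of finite type implies $\log(X)$ is a fine log scheme with a Zariski-local covering by finitely generated and integral charts --- I would argue directly from an affine covering $\{U_i=\Spec B_i\}$ of $X$, $B_i=\bpquot{A_i}{\cR_i}$, chosen so that each $B_i$ is strictly integral and strictly finitely generated. Strict integrality of $B_i$, i.e.\ $ab=ac\Rightarrow b=c$, says exactly that the underlying monoid $A_i$ is an integral (cancellative) monoid --- note that this forces $B_i$ to have no zero element, which is the reason general (zero-free) blueprints are the right framework here --- and strict finite generation says $A_i$ is finitely generated as a monoid. By the first part, the log structure of $\log(X)$ is Zariski-locally the one associated to the charts $A_i\to B^+_{i,\Z}=\Gamma(U_{i,\Z}^+,\cO)$, and these charts are now finitely generated, integral and affine. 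Since a Zariski-local covering by finitely generated integral affine charts is in particular an \'etale-local one, $\log(X)$ satisfies the definition of a fine log scheme, which completes the proof. This part is routine once the definitions are unpacked; the only point worth stressing is the passage from ``strictly integral'' to ``integral monoid'', which is immediate but explains why the general version of a blueprint appears in the hypothesis.
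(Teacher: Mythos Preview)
The paper states this theorem without proof; the author remarks in the introduction to Part~II that the content of the section on log schemes is ``not yet published'', so there is no argument in the text to compare against. I can only assess your outline on its own terms.

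Your treatment of the inclusion ``$\subseteq$'' and of the second assertion is correct: both follow by unpacking the definitions exactly as you say. For ``$\supseteq$'' you have correctly located the crux but not resolved it, and the obstruction is sharper than your parenthetical ``if necessary, by a ring-generating subset of $R_i$'' suggests. Adjoining a non-unit ring generator $r$ to $A_i$ genuinely changes the associated log structure at every point where $r$ fails to be a unit, and this cannot be repaired via the pre-addition $\cR_i$, since the chart --- and hence the log structure --- depends only on the monoid morphism $A_i\to R_i$. For instance, for $(\Spec\Z[x,y],\ \text{divisor }\{x=0\})$ with the global chart $\N\to\Z[x,y]$, $1\mapsto x$, no monoid $A$ whose image ring-generates $\Z[x,y]$ can chart this log structure on the whole plane: any such $A$ must contain an element mapping to $y$ (or to a non-unit involving $y$), which alters the log structure along $\{y=0\}$.

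The missing ingredient is a refinement of the Zariski cover. One shows that for any affine chart $P\to R$ and any point $z\in\Spec R$ there is a principal open $D(f)\ni z$ on which $R_f$ is generated as a ring by the image of $P$ together with $R_f^\times$; this rests on the observation that a local ring is always generated by its units (since $m=(1+m)-1$ for $m$ in the maximal ideal) and then descends to a single localization. On such a $D(f)$ one may take $A=P\times G$ with $G$ a group mapping onto enough units, and then your conditions (i) and (ii) hold simultaneously; your gluing step (iii) likewise depends on carrying out this refinement compatibly on overlaps. Without this argument the proof of ``$\supseteq$'' is incomplete.
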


\begin{rem} 
 Presumably, formally \'etale coverings can be defined for (general) blue schemes as well, which would allow us to extend $\log: \Sch_\Fun\to\Sch_\Z^\log$ to a functor whose essential image contains all fine log schemes.
  
 However, there are in general many different general blue schemes that give rise to the same log scheme. This means that passing from general blue schemes to log schemes loses information. Furthermore, the functor $\log:\Sch_\Fun\to\Sch_\Z^\log$ is not full. 
\end{rem}

\subsection{Congruence schemes}
\label{subsection: congruence schemes}

We conclude this text with some remarks on a possible definition of congruence schemes for blueprints. Note that Berkovich's definition of $\Fun$-schemes in \cite{Berkovich11} is based on congruences for monoids. Deitmar defines congruence schemes for \emph{sesquiads}, i.e.\ cancellative blueprints, in \cite{Deitmar11a}. Lescot defines the congruence spectrum of an idempotent semiring in \cite{Lescot11}. Note that these approaches have different technical realizations. In particular, Berkovich's and Deitmar's theory do not coincide in the case of monoids.

We extend the definition of the topological space of a congruence spectrum to blueprints. We review the different definitions of a structure sheaf (for the former two approaches) and add a third possible definition. It is not clear, which is the preferred approach to congruence schemes. This depends on a good behaviour of the global sections functor, see Remark \ref{rem: global sections for congruence schemes}.

\subsubsection{The congruence spectrum}
\label{subsubsection: the congruence spectrum}

Let $B=\bpquot A\cR$ be a blueprint. For an equivalence relation $\sim$ on $A$, we define the \emph{linear extension $\sim_\N$ of $\sim$ to $\N[A]$} as the equivalence relation on $\N[A]$ that is generated by $\sum a_i \sim_\N\sum b_i$ if $a_i\sim b_i$ for all $i$. We define the equivalence relation $\sim_\cR$ as the smallest equivalence relation containing both $\cR$ and $\sim_\N$.

A \emph{congruence on $B$}\index{Congruence} is an equivalence relation $\sim$ on $A$ that satisfies the following properties.
\begin{enumerate}
 \item The equivalence relation $\sim_\N$ is a pre-addition for $A$.
 \item The restriction of $\sim_\cR$ to $A$ equals $\sim$.
\end{enumerate}
Equivalently, an equivalence relation $\sim$ is a congruence if and only if it satisfies the following two conditions for all $a,b,c,d\in A$ (cf.\ \cite[Lemma 2.3]{blueprints1}).
\begin{enumerate}
 \item[{\eqref{ax1}$^*$}\!\!] \ If $a\sim b$ and $c\sim d$, then $ac\sim bd$.
 \item[{\eqref{ax2}$^*$}\!\!] \ If there exists a sequence
        $$ a \ \= \ \sum c_{1,k} \ \sim_\N \ \sum d_{1,k} \ \= \ \sum c_{2,k} \ \sim_\N \quad \dotsb \quad \sim_\N \ \sum d_{n,k} \ \= \ b $$
        with $c_{i,k},d_{i,k}\in A$, then $a\sim b$.
\end{enumerate}
Let $f:B\to C$ be a morphism of blueprints. The \emph{kernel of $f$}\index{Kernel} is the relation $\sim_f$ on $B$ that is defined by $a\sim_f b$ if and only if $f(a)\= f(b)$. Then we have the following correspondence between kernels and congruences, cf.\ Propositions 2.5 and 2.6 in \cite{blueprints1}.

\begin{prop}
 Let $f:B\to C$ be a morphism of blueprints. Then its kernel $\sim_f$ is a congruence. If $\sim$ is a congruence of $B$, then there exists a unique epimorphism $f:B\to C$ such that $\sim$ is the kernel of $f$.
\end{prop}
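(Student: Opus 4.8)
The statement has two halves, both obtained by unwinding definitions. Throughout I would use the equivalent description of a congruence $\sim$ on $B=\bpquot A\cR$ by the multiplicativity condition (if $a\sim b$ and $c\sim d$ then $ac\sim bd$) and the chain condition (if $a$ and $b$ in $A$ are linked by a sequence $a\=\sum c_{1,k}\sim_\N\sum d_{1,k}\=\dotsb\=b$ of relations of $\cR$ alternating with termwise $\sim$-relations, then $a\sim b$), rather than with the original formulation.

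\emph{The kernel is a congruence.} Write $C=\bpquot{A'}{\cR'}$. Since $\cR'$ satisfies Axiom~\eqref{ax3}, the relation $f(a)\=f(b)$ in $C$ means $f(a)=f(b)$ in $A'$, so $\sim_f$ is the kernel pair of the multiplicative map $f\colon A\to A'$; in particular it is an equivalence relation, and the multiplicativity condition follows from $f(ac)=f(a)f(c)$. For the chain condition, suppose $a$ and $b$ are linked by a sequence
\[
 a \ \= \ \sum c_{1,k} \ \sim_\N \ \sum d_{1,k} \ \= \ \sum c_{2,k} \ \sim_\N \ \dotsb \ \sim_\N \ \sum d_{n,k} \ \= \ b ,
\]
where the $\sim_\N$-steps are termwise applications of $\sim_f$. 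Applying $f$ turns each $\=$-step into a relation of $\cR'$ (as $f$ is a blueprint morphism) and each $\sim_\N$-step into a literal equality in $\N[A']$ (as $c_{i,k}\sim_f d_{i,k}$ means $f(c_{i,k})=f(d_{i,k})$); concatenating gives $f(a)\=f(b)$, hence $f(a)=f(b)$ by Axiom~\eqref{ax3}, i.e.\ $a\sim_f b$. So $\sim_f$ is a congruence.

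\emph{Realizing a congruence as a kernel.} Given a congruence $\sim$ on $B$, the multiplicativity condition makes $A'=A/\!\!\sim$ a commutative monoid with zero (the classes of $0$ and $1$ being absorbing resp.\ neutral), with quotient map $\pi\colon A\to A'$. Let $\cR'$ be the smallest equivalence relation on $\N[A']$ that is additive and multiplicative, contains $[0]\=(\text{empty sum})$, and contains the $\pi$-image of every relation of $\cR$; this exists by the remarks following the definition of a blueprint, and a priori only satisfies Axioms~\eqref{ax1} and~\eqref{ax2}. The substantive point is that, \emph{because $\sim$ is a congruence}, $\cR'$ also satisfies Axiom~\eqref{ax3}: if $[a]\=[b]$ in $\cR'$ with $a,b\in A$, then expanding how $\cR'$ is generated and lifting each generating step, and each additive/multiplicative closure step, back to $\N[A]$ produces a sequence linking $a$ and $b$ of exactly the shape in the chain condition, whence $a\sim b$ and $[a]=[b]$. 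Thus $C=\bpquot{A'}{\cR'}$ is a blueprint, $f=\pi\colon B\to C$ is a surjective blueprint morphism, and $\sim_f\,=\,\sim$ (using the Axiom~\eqref{ax3} just established, $a\sim_f b\iff[a]=[b]\iff a\sim b$).

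\emph{Uniqueness.} I would record the expected universal property: if $g\colon B\to D$ is a blueprint morphism with $\sim\,\subseteq\,\sim_g$ then $g$ factors through $\pi$ on underlying monoids, and since $\cR'$ is generated as a pre-addition by the $\pi$-images of the relations of $\cR$, the induced monoid map $A'\to D$ automatically sends $\cR'$ into $\cR_D$, so $g=\bar g\circ f$ for a unique blueprint morphism $\bar g\colon C\to D$. Applied to an epimorphism $f'\colon B\to C'$ with $\sim_{f'}\,=\,\sim$, this yields $\bar{f'}\colon C\to C'$ which is bijective on underlying monoids (injective since $\sim_{f'}=\sim=\sim_f$, surjective since $f'$ is) and sends the generators of $\cR'$ into $\cR_{C'}$; as $C'$ is generated as a blueprint by the image of $f'$ (this is the force of ``epimorphism'' here), $\cR_{C'}$ contains nothing more, so $\bar{f'}$ is an isomorphism identifying $f'$ with $f$. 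The only non-formal ingredient in the whole argument is the verification of Axiom~\eqref{ax3} in the middle step: it is precisely the chain condition in the definition of a congruence that keeps the generated pre-addition $\cR'$ from identifying elements of $A'$ that $\sim$ wants to keep apart, and the careful book-keeping of that chain-lifting is the one place where real attention is needed.
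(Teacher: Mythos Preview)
The paper does not itself prove this proposition; it cites \cite{blueprints1}, Propositions~2.5 and~2.6. Your treatment of the first two parts---that kernels are congruences, and that every congruence $\sim$ is realized by a quotient map $f:B\to C$ with the expected universal property---is correct, and you rightly identify the verification of Axiom~\eqref{ax3} for the generated pre-addition $\cR'$ on $A/\!\sim$ as the one substantive step; your chain-lifting sketch can be made precise by noting that $\cR'$ is the image of $\sim_\cR$ under the surjection $\N[A]\to\N[A']$.

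Your final uniqueness argument, however, has a genuine gap. The claim ``as $C'$ is generated as a blueprint by the image of $f'$, $\cR_{C'}$ contains nothing more'' is false: surjectivity on underlying monoids (equivalently, being a categorical epimorphism in $\bp$) does not constrain the pre-addition of the target. Concretely, take $B=\Fun[T]$ with the trivial congruence and $C'=\bpgenquot{\Fun[T]}{T\=T+T}$; the identity on underlying monoids is a surjective blueprint morphism $B\to C'$ with trivial kernel, yet $C'\not\cong B$. (One checks Axiom~\eqref{ax3} for $C'$ by observing that the \emph{set} of exponents occurring in a formal sum is invariant under the generating relation and under additive and multiplicative closure.) So there is no unique epimorphism with kernel~$\sim$ in the literal sense you argue for. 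The uniqueness intended is via the universal property---which you have already established, and which the paper spells out in the sentence immediately following the proposition---so your proof is complete once you discard the overreaching final step.
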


The morphism $f:B\to C$ is universal for $\sim$ in the sense that every morphism $g:B\to C'$ such that $\sim$ is contained in the kernel $\sim_g$ of $g$ factors uniquely through $f:B\to C$. This justifies to call $C$ the \emph{quotient of $B$ by $\sim$}\index{Blueprint!quotient} and denote $C$ by $B/\sim$. This means that the congruences of $B$ stay in bijection to the quotients of $B$.

A congruence is \emph{proper} if it has more than one equivalence class. A \emph{prime congruence of $B$}\index{Prime congruence} is a proper congruence $\sim$ such that for all $a,b,c\in B$ with $ab\sim ac$, either $b\sim c$ or $a\sim 0$. Equivalently, a congruence $\sim$ on $B$ is a prime congruence if and only if the quotient $B/\sim$ is \emph{integral}\index{Blueprint!integral}, i.e.\ $0\neq1$ and $ab=bc$ implies $b=c$ for all $a,b,c\in B$ with $a\neq0$.

The \emph{congruence spectrum $\CSpec B$ of $B$}\index{Congruence spectrum} is the set of all prime congruences on $B$ together with the topology generated by \emph{basis open subsets} 
\[
 U_{f,g} \quad = \quad \bigl\{ \ \sim\in\CSpec B \ \bigl| \ f\nsim g \ \bigr\}
\]
where $f$ and $g$ vary through $B$. If $f:B\to C$ is a morphism of blueprints, then the inverse image $f^{-1}(\sim)$ of a prime congruence $\sim$ on $C$ is a prime congruence on $B$. This defines a continuous map $f^\ast:\CSpec C\to \CSpec B$ of congruence spectra.

\subsubsection{Connection to the usual spectrum}
\label{subsubsubsection: connection to the usual spectrum}

The \emph{absorbing ideal of a congruence $\sim$ on $B$}\index{Absorbing ideal} is the subset $I_\sim=\{a\in B|a\sim 0\}$ of $B$. Let $I\subset B$ be an ideal and $\sim^I$ be the equivalence relation on $A$ that is defined by $a\sim^I b$ if and only if $a=b$ or $a,b\in I$. Let $\sim^I_\cR$ be the equivalence relation on $\N[A]$ as defined in the last section. Then the \emph{congruence generated by $I$} is the restriction $\sim_I$ of $\sim^I_\cR$ to $A$. The following proposition summarizes Propositions 2.13 and 2.14, Corollary 2.15 and Lemma 2.23 of \cite{blueprints1}.

\begin{prop}\label{prop: congruences and ideals}
 If $\sim$ is a congruence, then $I_\sim$ is equal to the ideal $f^{-1}(0)$ for the quotient map $f:B\to B/\sim$. If $I$ is an ideal, then $\sim_I$ is equal to the kernel of the quotient map $f:B\to B/I$. If $I$ is an ideal and $\sim=\sim_I$ the congruence generated by $I$, then $I_\sim=I$. If $\sim$ is a prime congruence, then $I_\sim$ is a prime ideal.
\end{prop}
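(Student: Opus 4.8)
The plan is to derive all four assertions from results already in hand: the bijection between congruences on a blueprint $B$ and quotient morphisms out of $B$ (the preceding proposition, i.e.\ \cite[Prop.\ 2.5 and 2.6]{blueprints1}), the universal property of $B/I$ (\cite[Prop.\ 2.13]{blueprints1}), and the fact that $f^{-1}(0)$ is an ideal for any blueprint morphism $f$ (\cite[Prop.\ 2.14]{blueprints1}). For the first assertion, write $f\colon B\to B/\sim$ for the canonical morphism, so that $a\sim b$ if and only if $f(a)\= f(b)$; setting $b=0$ and using $f(0)=0$ gives $I_\sim=\{a\mid f(a)=0\}=f^{-1}(0)$, which is an ideal by \cite[Prop.\ 2.14]{blueprints1}.

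For the second assertion I would compare universal properties. By its construction as the restriction to $A$ of $\sim^I_\cR$, together with the characterization of congruences in \cite[Lemma 2.3]{blueprints1}, the congruence $\sim_I$ is the \emph{smallest} congruence on $B$ whose absorbing ideal contains $I$; equivalently, for a blueprint morphism $g\colon B\to C$ one has $\sim_I\subseteq\sim_g$ if and only if $I\subseteq g^{-1}(0)$ (the forward direction uses $a\sim_I 0$ for $a\in I$, the backward direction the minimality of $\sim_I$ and the fact that $\sim_g$ is a congruence). Hence the quotient map $B\to B/\sim_I$, which is universal among morphisms $g$ with $\sim_I\subseteq\sim_g$, and the quotient map $B\to B/I$, which is universal among morphisms $h$ with $I\subseteq h^{-1}(0)$, satisfy one and the same universal property, so they agree up to unique isomorphism and in particular have the same kernel. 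That kernel is $\sim_I$ by \cite[Prop.\ 2.5 and 2.6]{blueprints1} and is $\sim_f$ by definition, whence $\sim_I=\sim_f$.

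The third assertion then follows by combining the first two: applying the first assertion to the congruence $\sim_I$, whose quotient is $B/I$ by the second assertion, gives $I_{\sim_I}=f^{-1}(0)$ with $f\colon B\to B/I$, and $f^{-1}(0)=I$ by the defining property of $B/I$. For the fourth assertion, let $\sim$ be a prime congruence. Then $I_\sim$ is an ideal by the first assertion, and it is proper: from $1\sim 0$ and multiplicativity of $\sim$ one would obtain $a\sim 0$ for every $a\in B$, so $\sim$ would have a single equivalence class, contradicting that a prime congruence is proper. If $ab\in I_\sim$, then $ab\sim 0=a\cdot 0$, so the defining property of a prime congruence forces $b\sim 0$ or $a\sim 0$, i.e.\ $b\in I_\sim$ or $a\in I_\sim$; thus $B\setminus I_\sim$ is multiplicatively closed and $I_\sim$ is a prime ideal.

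I do not foresee a genuine obstacle: the statement is bookkeeping with the definitions and the cited results. The step demanding the most care is the second assertion, specifically the claim that the congruence $\sim_I$ generated by $I$ is the smallest congruence with $I$ in its absorbing ideal, since this is exactly what makes the two universal properties match. If one prefers to avoid the universal-property phrasing, one can instead unwind the explicit definitions of $\sim^I_\cR$ and of $\sim_f$ and verify the two inclusions $\sim_I\subseteq\sim_f$ and $\sim_f\subseteq\sim_I$ directly via \cite[Lemma 2.3]{blueprints1}: the first is immediate from $I\subseteq f^{-1}(0)$, and the second reduces to checking that the generating relations of $\sim_I$ lie in the congruence $\sim_f$.
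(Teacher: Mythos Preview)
Your proof is correct. Note that the paper itself does not give a proof of this proposition: it merely records that the four assertions summarize Propositions 2.13 and 2.14, Corollary 2.15, and Lemma 2.23 of \cite{blueprints1}. Your argument reconstructs exactly the content of those results from the ingredients already stated in the present text (the correspondence between congruences and quotient morphisms, the universal property of $B/I$, and the fact that $f^{-1}(0)$ is an ideal), so there is no meaningful divergence in approach. The one step you flag as delicate---that $\sim_I$ is the smallest congruence whose absorbing ideal contains $I$---is indeed the crux of the second assertion, and your minimality argument via $\sim^I\subseteq\sim'$, hence $\sim^I_\cR\subseteq\sim'_\cR$, hence $\sim_I\subseteq\sim'$, is the right way to see it.
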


If $B$ is a ring, then the associations $\sim\mapsto I_\sim$ and $I\mapsto\sim_I$ are mutually inverse and establish a correspondence between ideals and congruences. In general, it is only true that the vanishing ideal $I_\sim$ of the congruence $\sim$ generated an ideal $I$ is equal to $I$, but not vice versa. In other words, a blueprint has in general more congruences, and thus quotients, than ideals.

The above proposition implies the following connection between $\CSpec B$ and $\Spec B$.

\begin{lemma}
 Let $B$ be a blueprint. Then the association $\sim\mapsto I_\sim$ defines a surjective continuous map $\CSpec B\to\Spec B$. This map is functorial in $B$, i.e. a morphism of blueprints $f:B\to C$ gives rise to a commutative diagram
 \[
  \xymatrix{\CSpec C\ar[r]^{f^\ast} \ar[d] & \CSpec B \ar[d] \\ \Spec C\ar[r]^{f^\ast} & \Spec B }
 \]
 of topological spaces.
\end{lemma}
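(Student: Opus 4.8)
The plan is to prove the four assertions about $\pi\colon\CSpec B\to\Spec B$, $\sim\mapsto I_\sim$, one at a time, relying on Proposition~\ref{prop: congruences and ideals} and on the elementary facts about blue fields from Section~\ref{subsection: ideals and units}. \emph{Well-definedness} is exactly the last clause of Proposition~\ref{prop: congruences and ideals}: a prime congruence $\sim$ has a prime absorbing ideal $I_\sim$, so $\pi$ does land in $\Spec B$. \emph{Continuity}: the open sets $D(f)=\{\fp\in\Spec B\mid f\notin\fp\}$, $f\in B$, form a basis of $\Spec B$ as in ordinary scheme theory, and
\[
 \pi^{-1}\bigl(D(f)\bigr) \quad = \quad \bigl\{\ \sim\in\CSpec B \ \bigm|\ f\notin I_\sim\ \bigr\} \quad = \quad \bigl\{\ \sim \ \bigm|\ f\nsim 0\ \bigr\} \quad = \quad U_{f,0}
\]
is a basis open of $\CSpec B$; hence $\pi$ is continuous.

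\emph{Functoriality} is a one-line computation. Given a blueprint morphism $f\colon B\to C$ and a prime congruence $\sim$ on $C$, the inverse-image congruence $f^{-1}(\sim)$ on $B$ (again prime, with continuous formation as recalled at the end of Section~\ref{subsubsection: the congruence spectrum}) has absorbing ideal $I_{f^{-1}(\sim)}=\{a\in B\mid f(a)\sim f(0_B)=0_C\}=f^{-1}(I_\sim)=f^\ast(I_\sim)$. So the square commutes; and as all four maps occurring in it are continuous --- the verticals by the previous paragraph, $f^\ast\colon\Spec C\to\Spec B$ as in Section~\ref{subsection: globalizations} --- it commutes in $\Top$.

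The only point that genuinely requires an idea, and the step I expect to be the main obstacle, is \emph{surjectivity} of $\pi$. The obvious candidate fails: for $\fp\in\Spec B$ the congruence $\sim_\fp$ generated by $\fp$ satisfies $I_{\sim_\fp}=\fp$ (Proposition~\ref{prop: congruences and ideals}) but need not be prime --- for the idempotent monoid $B=\{0,1,e\}$ with $e^2=e$ and $\fp=(0)$ one has $B/\sim_\fp=B$, which has no zero divisors yet is not integral (as $e\cdot e=e\cdot 1$ with $e\neq 0,1$), so $\sim_\fp\notin\CSpec B$. Instead I would pass to the residue field: fixing $\fp\in\Spec B$ (we may assume $B\neq 0$, else $\Spec B=\emptyset$), localize at $\fp$ and let $\sim$ be the kernel of the composite morphism $B\to B_\fp\to\kappa(\fp)=B_\fp/\fm_\fp$. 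Kernels of blueprint morphisms are congruences, so $\sim$ is a congruence on $B$; and the induced embedding of $B/\sim$ into the blue field $\kappa(\fp)$ shows that $B/\sim$ has $0\neq 1$ and admits cancellation of nonzero elements, i.e.\ is integral in the sense of Section~\ref{subsubsection: the congruence spectrum}, so $\sim$ is a prime congruence. Finally $I_\sim=\{a\in B\mid a\mapsto 0\text{ in }\kappa(\fp)\}$ is the contraction along $B\to B_\fp$ of the maximal ideal $\fm_\fp=\fp B_\fp$, namely $\fp$ itself; hence $\pi(\sim)=\fp$ and $\pi$ is surjective. (As an alternative one could pick, by Zorn's lemma, a congruence maximal among those with absorbing ideal $\fp$ and check directly that it is prime, but this forces a more delicate analysis of the congruence obtained by adjoining a single relation, which is why I would favour the residue-field argument.)
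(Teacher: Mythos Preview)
Your proof is correct. The paper itself gives no proof of this lemma --- it is a survey and merely asserts that the statement follows from Proposition~\ref{prop: congruences and ideals}. You are right that the only nontrivial point is surjectivity, and right that the obvious candidate $\sim_\fp$ does not work in general: your idempotent example $B=\{0,1,e\}$ with $\fp=(0)$ is a genuine counterexample, since $B/\!\sim_\fp\,=B$ has no zero-divisors (so $(0)$ is a prime ideal) yet is not integral in the sense of Section~\ref{subsubsection: the congruence spectrum}. Passing instead to the kernel of $B\to\kappa(\fp)$ is the clean fix; this is essentially the construction the paper records in the paragraph immediately after the lemma (the residue field $B_\sim/\gen{\sim}_{B_\sim}$ of a prime congruence), only read in the opposite direction $\fp\rightsquigarrow\sim$.
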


One can associate residue fields to the points of a congruence spectrum in the following way. The \emph{localization of $B$ at a prime congruence $\sim$}\index{Blueprint!localization} is the blueprint $B_\sim=B_\fp$ where $\fp=I_\sim$ is the absorbing ideal of $\sim$, which is prime by Proposition \ref{prop: congruences and ideals}. The \emph{residue field of $\sim$}\index{Residue field} is $B_\sim/\gen\sim_{B_\sim}$, which is a blue field.

\subsubsection{From congruence spectra to congruence schemes}
\label{subsubsection: from congruence spectra to congruence schemes}

The difficulty of defining congruence schemes is that there is not a clear way of how to define a structure sheaf for congruence spectra. In the following, we will describe three possible approaches, which are the generalization of Berkovich's definition of $\Fun$-scheme in \cite{Berkovich11} from monoids to blueprints, Deitmar's definition of congruence schemes in \cite{Deitmar11a} and a third independent idea.

The idea of Berkovich's approach is to restrict the collection of open sets to a class of open subsets for which a coordinate blueprint can be defined by a universal property. Namely, a basis open subset $U=U_{f,g}$ of $\CSpec B$ is an \emph{affine open (after Berkovich)} if the category of $B$-algebras $g:B\to C$ with $g^\ast(\CSpec C)\subset U$ has an initial object $B\to B_U$. The \emph{structure sheaf of $\CSpec B$ (after Berkovich)} is the sheaf supported by the topology generated by affine opens that is associated with the collection of coordinate algebras $B\to B_U$.

Deitmar's approach makes use of the equivalence of ideals and congruences for rings. However, to do so, we have to restrict to cancellative blueprints resp.\ sesquiads. Then it is possible to associate with every basis open $U_{f,g}$ the following blueprint $B_{f,g}$. The base extension $(U_{f,g})_\Z^+$ to schemes is the open affine subscheme of $X^+_\Z$ that is the complement of the vanishing set of $h=g-f$. Let $R_{f,g}=B_\Z^+[(h)^{-1}]$ be the coordinate ring of $(U_{f,g})_\Z^+$ and $S=\{h^i\}_{i\geq 0}$. The structure sheaf of $\CSpec B$ is supported on all open subsets of $\CSpec B$ and its value on $U_{f,g}$ is the subblueprint of the ring $R_{f,g}$ that is generated by the multiplicative subsets $B$ and $S^{-1}$ of $R_{f,g}$. Note that this definition doesn't cover the case of congruence spectra of semirings or more general non-cancellative blueprints, which are of interest for connections to tropical and analytic geometry.

A third possibility is to define an \emph{affine open subset of $\CSpec B$} as an open subset $U_{f,g}$ that is isomorphic to the congruence spectrum of a blueprint, i.e.\ there is a morphism $B\to B_{f,g}$ of blueprints that induces an homeomorphism of $\CSpec B_{f,g}$ with $U_{f,g}$ and isomorphisms between the residue fields of prime congruences. The \emph{structure sheaf of $\CSpec B$} is the sheaf supported on the topology generated by affine opens and that sends an affine open $U_{f,g}$ to $B_{f,g}$. Note that this approach is closely connected to Berkovich's idea, though it is not clear to me if it produces the same class of an affine opens and the same structure sheaf.

Either of the three approaches yields a locally blueprinted space $\CSpec B$ and a notion of affine congruence schemes. A \emph{congruence scheme}\index{Congruence scheme} can be defined as a locally blueprinted space that admits an open covering by subspaces that are isomorphic to affine congruence schemes.

\begin{rem}\label{rem: global sections for congruence schemes}
 A desirable property to pursue the theory beyond the basic definitions is that (a subcategory of) the category of affine congruence schemes is equivalent to (a subcategory of) the category of blueprints. Let $\Gamma$ be the endofunctor on blueprints that sends a blueprint $B$ to the global sections $\Gamma(X,\cO_X)$ of the congruence spectrum $X=\CSpec B$. Note that this functor depends on the definition of the structure sheaf. Deitmar shows in \cite[Thm.\ 2.5.1]{Deitmar11a} that $\Gamma B$ is isomorphic to $\Gamma\Gamma B$ for his notion of a structure sheaf. It is not clear to me whether a similar statement is true for the other two approaches.
 
\end{rem}

\newpage
\addcontentsline{toc}{section}{Index}
\printindex

\addcontentsline{toc}{section}{References}
\bibliographystyle{plain}

\end{document}